\renewcommand{\green}[1]{\textcolor{darkgreen}{#1}}
\definecolor{darkgreen}{rgb}{0,.5,0}
\definecolor{brown}{rgb}{0.5,0.3,0}
\theoremstyle{definition}
\newtheorem{theorem}{Theorem}[section]
\newtheorem{definition}[theorem]{Definition}
\newtheorem{lemma}[theorem]{Lemma}
\newtheorem{proposition}[theorem]{Proposition}
\newtheorem{corollary}[theorem]{Corollary}
\newtheorem{propdef}[theorem]{Proposition/Definition}
\theoremstyle{remark}
\newtheorem{remark}[theorem]{Remark}
\newtheorem{example}[theorem]{Example}
\newtheorem*{rep@theorem}{\rep@title} \newcommand{\newreptheorem}[2]{%
\newenvironment{rep#1}[1]{%
\def\rep@title{\bf #2 \ref{##1}}%
\begin{rep@theorem} }%
{\end{rep@theorem} } }
\numberwithin{equation}{section}
\newcommand{\set}[1]{\left\{#1\right\}}
\newcommand{\abs}[1]{\left|#1\right|}
\DeclareMathOperator{\D}{{\mathscr D}}
\DeclareMathOperator{\BC}{BC}
\DeclareMathOperator{\RBC}{\overline{BC}}
\DeclareMathOperator{\IN}{{IN}}
\DeclareMathOperator{\M}{\mathrm M}         % a matroid
\DeclareMathOperator{\cl}{cl}               % matroid closure
\DeclareMathOperator{\Tr}{{Tr}}  % matroid truncation
\DeclareMathOperator{\nbc}{{\mathit{nbc}}}  % the nbc complex
\DeclareMathOperator{\EA}{{EA}}  % externally active set
\DeclareMathOperator{\EP}{{EP}}  % internally active set
\DeclareMathOperator{\IA}{{IA}}  % internally active set
\DeclareMathOperator{\IP}{{IP}}  % internally active set
\DeclareMathOperator{\rank}{{rk}}
\DeclareMathOperator{\Des}{{Des}}
\renewcommand{\P}{{\mathscr P}}
\newcommand{\Q}{{\mathscr Q}}
\newcommand{\B}{{\mathscr B}}
\newcommand{\F}{{\mathscr F}}
\newcommand{\G}{{\mathscr G}}
\newcommand{\T}{{\mathscr T}}
\newcommand{\ee}{\mathcal{E}}
\renewcommand{\j}{\,\mathsf{J}}
\newcommand{\AAA}{{A_{\M,\M^\perp}}}  % Chow ring of the conormal fan
\newcommand{\SSS}{{S_{\M,\M^\perp}}}  % polynomial ring of the conormal fan
\newcommand{\III}{{I_{\M,\M^\perp}}}  % ideal defining the Chow ring
\newcommand{\JJJ}{{J_{\M,\M^\perp}}}  % ideal defining the Chow ring
\newcommand{\og}{\overline{\gamma}}
\DeclareMathOperator{\Asc}{{Asc}} 
\begin{document}

\title{Lagrangian combinatorics of matroids}

\author{Federico Ardila}
\address{San Francisco State University and Universidad de Los Andes}
\email{federico@sfsu.edu}

\author{Graham Denham}
\address{University of Western Ontario}
\email{gdenham@uwo.ca}

\author{June Huh}
\address{Princeton University}
\email{huh@princeton.edu}

\maketitle

\begin{abstract}
The \emph{Lagrangian geometry of matroids} was introduced in \cite{ADH} through the construction of the \emph{conormal fan} of a matroid $\M$. We used the conormal fan to give a Lagrangian-geometric
interpretation of %the Chern--Schwartz--MacPherson class of $\M$ and
the $h$-vector of the broken circuit complex of $\M$: %$\RBC(\M)$:
its entries are the degrees of the mixed intersections of  %$\gamma^k\delta^{|M|-2-k}$ 
 %its $i$-th entry $h_i$ is the degree of $\gamma^k\delta^{n-k-1}$ 
  certain convex piecewise linear functions $\gamma$ and $\delta$ on the conormal fan of  $\M$.
By showing that the conormal fan satisfies the Hodge-Riemann relations, we proved Brylawski's conjecture that this $h$-vector is a log-concave sequence.

This sequel explores the \emph{Lagrangian combinatorics of matroids}, further developing the combinatorics of biflats and
biflags of a matroid, and relating them to the theory of basis activities developed by Tutte, Crapo, and Las Vergnas. Our main result is a combinatorial strengthening of the $h$-vector computation: we write  the $k$-th mixed intersection of $\gamma$ and $\delta$ %$\gamma^k\delta^{|M|-k-2}$ 
explicitly as a sum of biflags corresponding to the $\nbc$-bases  of internal activity $k+1$.
\end{abstract}

\tableofcontents

\section{Introduction.} \label{sec:intro}

Let $\M$ be a matroid of rank $r+1$ on $n+1$ elements with no loops and no coloops.
The \emph{Lagrangian geometry of matroids} was introduced in
\cite{ADH} through the construction of the \emph{conormal fan}  of  $\M$. The conormal fan of $\M$ %$\Sigma_{\M,\M^\perp}$ 
is a Lagrangian analogue of the Bergman fan of $\M$, which
in turn is a tropical geometric model of $\M$. We used the conormal fan
to give a tropical geometric interpretation of 
%the Chern-Schwartz-MacPherson
%class of $\M$. As a consequence, we gave a formula for 
the $h$-vector
of the broken circuit complex $\BC(\M)$. %$\RBC(\M)$.
 %in terms of the intersection theory of the conormal fan $\Sigma_{\M,\M^\perp}$. 
Explicitly, we identified convex piecewise linear functions $\gamma$ and $\delta$  such that %$\Sigma_{\M,\M^\perp}$ such that
% in the conormal Chow ring $A(\Sigma_{\M,\M^\perp})$ for which 
%\begin{equation}\label{eq:intersection}
\[
\gamma^k \delta^{n-k-1} \cap 1_{\M,\M^\perp}
 = %t_{k+1,0}(\M) = 
h_{r-k}(\BC(\M)) \ \ \text{for all $k$,} %t_{k+1,0}(\M)  
%\qquad \text{for all $0 \leq k \leq r$},
\]
%\end{equation}
where %$\M$ is loopless and coloopless matroid of rank $r+1$ on $n+1$ elements 
%$\RBC(\M)$ is the reduced broken circuit complex of $\M$,
%and 
$1_{\M,\M^\perp}$  is the top-dimensional constant Minkowski weight $1$ on the conormal fan of $\M$ \cite[Theorem 1.2]{ADH}. %the degrees are taken with respect to the top-dimensional constant Minkowski weight $1_{\M,\M^\perp}$ on the conormal fan of $\M$ \cite[Theorem 1.2]{ADH}.
We also showed that the conormal fan is \emph{Lefschetz}, and in particular,  satisfies the Hodge--Riemann
relations \cite[Theorem 5.27]{ADH}. 
Combining these results, we proved Brylawski's conjecture from \cite{Brylawski82} that 
the $h$-vector
of the  broken circuit complex
forms a log-concave sequence,
that is,
\[
h_i(\BC(\M))^2 \geq h_{i-1}(\BC(\M))h_{i+1}(\BC(\M)) \ \  \text{for all $i$.}
\]

In this followup paper, we explore the \emph{Lagrangian combinatorics
  of matroids}, which studies the combinatorial structure of
the conormal fan. We further develop the study of biflats and biflags
of matroids, initiated in \cite{ADH}, unveiling a strong connection
to the theory of basis activities developed by Tutte \cite{Tutte67},
Crapo \cite{Cr69}, and Las Vergnas
\cite{LasVergnas13}.
In particular, we directly relate the mixed intersections $\gamma^k \delta^{n-k-1}$ to the reduced broken circuit complex using the \emph{canonical expansion} in Section \ref{sec:canonical},
obtaining  the following bijective strengthening of \cite[Theorem 1.2]{ADH} in the \emph{conormal Chow ring} of Definition \ref{def:Chowring}.
%Let $\M$ be a matroid of rank $r+1$ on $n+1$ elements. 

\begin{theorem}\label{thm:main} %\label{thm:theexpansion}
For $0 \le k \le r$,
in the conormal Chow ring of $\M$, we have
\[
\gamma^k \delta^{n-k-1} = \sum_B %_{\substack{B \nbc\textrm{-basis} \\ |\IA(B)| = k+1}} 
x_{\F^+(B)| \G^+(B)},
\]
 where 
the sum is over the $\nbc$-bases of $\M$ of internal activity $k+1$.
%Therefore
%\[
%\deg(\gamma^k \delta^{n-k-1}) = %t_{k+1,0}(\M) = 
%h_{r-k}(\RBC(\M)).
%\]
\end{theorem}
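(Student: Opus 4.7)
The plan is to establish the identity by computing the canonical expansion of the product $\gamma^k\delta^{n-k-1}$ in the conormal Chow ring, and then recognizing the resulting sum as being indexed by $\nbc$-bases of specified internal activity. The guiding principle is that the numerical specialization (capping with $1_{\M,\M^\perp}$) already yields $h_{r-k}(\BC(\M))$ by \cite[Theorem 1.2]{ADH}, and by the Björner--Las Vergnas theorem this is the count of $\nbc$-bases of internal activity $k+1$; our task is to match the combinatorial data upstairs.

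First I would recall the canonical expansion machinery of Section \ref{sec:canonical}, which rewrites an element of the conormal Chow ring uniquely as a nonnegative combination of the distinguished squarefree monomials $x_{\F|\G}$ indexed by biflags. The first step is to expand $\gamma$ and $\delta$ individually in this basis, using their defining formulas as sums over flats and coflats. With these in hand, the approach is to proceed by a simultaneous induction on the pair $(k,n-k-1)$, with a base case (say $k=0$) handled directly: expand $\delta^{n-1}$ canonically and show that the only surviving biflags are the $(\F^+(B),\G^+(B))$ for $\nbc$-bases $B$ of internal activity $1$.

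The inductive step depends on a \emph{one-step multiplication rule}: multiplying a canonical monomial $x_{\F|\G}$ by $\gamma$ (resp.\ $\delta$) and rewriting canonically should correspond to appending a new flat (resp.\ coflat) of a prescribed type, which in basis-activity language corresponds to recording a new internally (resp.\ externally) active element. Iterating this $k$ times with $\gamma$ and $n-k-1$ times with $\delta$, the growing biflag records the construction of a basis $B$ together with its activity filtration in the style of Tutte, Crapo, and Las Vergnas. The hypothesis that the resulting biflag is exactly $(\F^+(B),\G^+(B))$ is forced once we verify that every cancellation in the Chow ring corresponds to a broken-circuit exchange. This is where the theory developed earlier in this paper, linking biflats and biflags to basis activities, will be essential.

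The main obstacle, and the heart of the combinatorial content, is precisely this matching: showing that the canonical expansion at each inductive step produces exactly the biflags coming from $\nbc$-bases, with multiplicity one, and with the correct internal activity statistic. The difficulty is two-sided. First, one must check that no ``extra'' biflag appears in the expansion; this amounts to showing that whenever a broken-circuit pattern arises, it is killed by the defining relations of the conormal Chow ring. Second, one must check that every $\nbc$-basis of internal activity $k+1$ is produced exactly once, which requires a bijective sweep through the ground set in its chosen order, at each step using a factor of $\gamma$ or $\delta$ according to whether the next element is placed in $B$ (as an internally active element) or used to enlarge the flat/coflat flag. Verifying this bijection with the combinatorics of $(\F^+(B),\G^+(B))$, and confirming that the count agrees with $h_{r-k}(\BC(\M))$ via \cite[Theorem 1.2]{ADH}, completes the proof.
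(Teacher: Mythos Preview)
Your outline misreads the canonical expansion in a way that undermines the whole strategy. The canonical expansion is \emph{not} a unique normal form in which every element is written once and for all as a nonnegative combination of squarefree monomials; it is a specific recursive procedure in which, at each step, one chooses $e=\max\bigl(E-\bigcup_j(F_j\cap G_j)\bigr)$ and replaces $\delta$ by $\delta_e$. The same biflag may, and does, appear with multiplicity in intermediate expansions, and different choices of representatives for $\gamma$ and $\delta$ would produce genuinely different expressions. So there is no ``basis'' in which cancellations are automatically visible, and the claim that ``every cancellation corresponds to a broken-circuit exchange'' has no meaning until you fix such a procedure and analyze it directly.

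The paper's actual mechanism is quite different from your proposed induction on $(k,n-k-1)$. One first expands $\delta^{n-k-1}$ in full via the canonical procedure, obtaining a large sum indexed by tables $(\F|\G,\ee)$, and then multiplies by $\gamma^k$ all at once. The crucial device you are missing is the eradication Lemma~\ref{lem:aeradicates}: if the flag $\F$ already has $s$ distinct proper flats and $s+k>r$, or $s+k=r$ but $\F$ is not an initial flag of ranks $1,\ldots,s$, then $x_{\F|\G}\,\gamma^k=0$. This lemma is what kills almost every term, and it has nothing to do with broken-circuit exchanges. The substantive combinatorics is then Section~\ref{sec:theproof}: showing that a term which \emph{resists} multiplication by $\gamma^k$ must have a unique double jump at $r-k$, contain no mixed biflats, and in fact be exactly the $\nbc$ monomial $x_{\F(B)|\G(B)}$ of some $\nbc$-basis with $|\IA(B)|=k+1$ (Propositions~\ref{prop:nomixedbiflats} and~\ref{prop:delta expansion II}), with the arrival sequence uniquely determined (Corollary~\ref{cor:no multiplicity}). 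Only after this is established does one compute $x_{\F(B)|\G(B)}\,\gamma^k=x_{\F^+(B)|\G^+(B)}$ by successively multiplying by $\gamma_{c_1},\ldots,\gamma_{c_k}$ for $\IA(B)=\{c_1>\cdots>c_{k+1}\}$ (Proposition~\ref{prop:nbc is resistant}). Your ``one-step multiplication rule'' gestures at this last step, but the real difficulty---why \emph{only} these terms survive---is untouched in your proposal.
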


%For the \emph{conormal Chow ring} of a matroid, see Section \ref{sec:introgeom}.
The symbol $\F^+(B)| \G^+(B)$ stands for the extended  $\nbc$-biflag associated to $B$  in Section \ref{sec:nbc cones}. %Proposition \ref{def:nbc regions}.
By construction, for every $B$, %each one of the corresponding monomials  has degree one.
\[
 x_{\F^+(B)| \G^+(B)} \cap 1_{\M,\M^\perp} =1.
\]
%See Proposition \ref{def:nbc regions} for the construction of the  extended $\nbc$-biflag.
%For the notion of extended $\nbc$-biflag associated to a basis, see Proposition \ref{def:nbc regions}.
%identifies  the $h$-vector of the broken circuit complex with the degrees of the mixed intersections $\gamma^k \delta^{n-k-1} \cap 1_{\M,\M^\perp}$,
Since $\M$ has exactly $h_{r-k}(\BC(\M))$ bases with internal activity $k+1$,
Theorem \ref{thm:main} implies \cite[Theorem 1.2]{ADH}. 
%\[
%\text{$\deg(x_{\F|\G})=1$  for every maximal biflag $\F|\G$ of $\M$.}
%\]
%refines \eqref{eq:intersection} because $\M$ has $h_{r-k}(\RBC(\M))$ $\nbc$ bases $B$ with $|\IA(B)|=k+1$.
The original proof of  \cite[Theorem 1.2]{ADH} %the Lagrangian-geometric
%interpretation \eqref{eq:intersection} of the 
%$h$-vector of the broken circuit complex of $\M$ given in \cite{ADH} 
relied on the special case $k=0$ of Theorem \ref{thm:main}  \cite[Proposition 4.9]{ADH} and 
on the theory of Chern--Schartz--MacPherson cycles of matroids
introduced by L\'opez de Medrano, Rinc\'on, and Shaw \cite{LRS17}.
%the delicate recursive structure of the conormal Chow rings of matroid minors
%algebraic and geometric properties of the Chow rings of toric
%varieties and on the tropical intersection theoretic techniques of \cite{LRS17}. 

\subsection{Enumerative combinatorics of matroids.}\label{sec:enum}

For the remainder of this paper, we fix a total ordering on the ground set $E$ of $\M$ and identify $E$ with the set $\{0,1,\ldots,n\}$.
 We refer to  \cite{Welsh} and \cite{Oxbook} for any undefined matroid terminology.
We are interested in the following $r$-dimensional simplicial complexes associated to $\M$:
\begin{enumerate}[$\bullet$]\itemsep 5pt
\item The \emph{independence complex} $\IN(\M)$, the collection of subsets of $E$ which do not contain any circuit of $\M$.
\item The \emph{broken circuit complex} $\BC(\M)$, the collection of subsets of $E$ which do not contain any broken circuit of $\M$.
\end{enumerate}
A \emph{broken circuit} is a subset obtained from a circuit of $\M$ by deleting the least element in the fixed ordering on $E$. 
The broken circuit complex $\BC(\M)$ is the cone over the \emph{reduced broken circuit complex} $\RBC(\M)$ with apex $0$. %and hence $h(\BC(\M)) = h(\RBC(\M))$.

For a simplicial complex $\Delta$ of dimension $r$, its \emph{$f$-vector}
$f(\Delta) = (f_0, f_1,\ldots, f_{r+1})$ is defined by
\[
f_i(\Delta) = \text{the number of faces in $\Delta$ with $i$ vertices}.
\]
The $f$-vector is often stored more compactly in the \emph{$h$-vector}
$h(\Delta) = (h_0, h_1,\ldots, h_{r+1})$, given by
%\[
%f_0q^{r+1} + f_1q^{r} + \cdots + f_{r+1} = h_0(q+1)^{r+1} +
%h_1(q+1)^{r} + \cdots + h_{r+1}.
%\]
\[
\sum_{i=0}^{r+1} f_i(\Delta) q^{r-i+1}=\sum_{i=0}^{r+1} h_i(\Delta) (q+1)^{r-i+1}.
\]
The $h$-vector of the broken circuit complex is given by
%two simplicial complexes associated to a matroid
%$\M$ of rank $r+1$ on $E=\{0, 1, \ldots, n\}$; the \emph{independence complex} and
%the \emph{reduced broken circuit complex} of $\M$:
%\begin{eqnarray*}
 % \IN(\M) &=& \set{ \text{ independent sets of } \M }, \\
%  \BC(\M) &=& \set{ \text{ subsets of $E$  that
%  do not contain a broken circuit of $\M$} },
%\end{eqnarray*}
%where a \emph{broken circuit} is a set of the form $C - \min_< C$ for
%a circuit $C$ of $\M$. They have dimension $r$ and $r-1$, respectively. 
%The broken circuit complex $\BC(\M)$ is the cone over $\RBC(\M)$ with apex $0$, 
%so $h(\BC(\M)) = h(\RBC(\M))$.
%The independence complex of a matroid is the reduced broken circuit complex of a larger matroid \cite{Brylawski77}, so we focus on the latter construction.
%The $h$-vector of the broken circuit complex of $\M$ is determined by the \emph{Tutte polynomial} 
%\[
%T_{\M}(x,y) = \sum_{A \subseteq E} (x-1)^{\abs{A}-r(A)} (y-1)^{r+1-r(A)}
%\]
%as follows: for all $0 \leq k \leq r$ we have
\[
h_{r-k}(\BC(\M)) = h_{r-k}(\RBC(\M)) = t_{k+1,0}(\M) \ \ \text{for all $k$}, %\ \ \text{and} \ \ h_{r+1}(\BC(\M)) =0,
\]
where $t_{i,j}(\M)$ is the coefficient of $x^iy^j$ in the Tutte polynomial  $T_{\M}(x,y)$ \cite{Bjorner}. %$T_{\M}(x,y)$.
In particular, $h_{r+1}(\BC(\M))$ is zero, and $h_r(\BC(\M))$ is  Crapo's \emph{beta invariant}
% of $\M$, which is defined to be
\[
\beta_{\M} %\coloneq   h_r(\RBC(\M)) 
= t_{1,0}(\M).
\]
%More generally, the \emph{beta invariant of a flag of flats $\F=\{F_1
%  \subsetneq \cdots \subsetneq F_k\}$ in $\M$} is defined to be
%\[
%\beta(\M[\F]) \coloneq  \prod_{i=0}^k \beta(\M[F_i, F_{i+1}])
%\]
%where $\M[F_i, F_{i+1}] \coloneq  \M|F_{i+1}/F_i$ and, by convention, $F_0 =
%\emptyset$ and $F_{k+1} = E$. This quantity was introduced by L\'opez de
%Medrano, Rinc\'on, and Shaw, who encountered it in their description
%of the Chern--Schwartz--MacPherson classes of a matroid \cite{LRS17}.
%
%Let $a_0, a_1, \ldots, a_n$ be a sequence of  nonnegative integers,and let $d$ be the largest index with nonzero  $a_d$. We say this sequence is
%\begin{enumerate}%[$\bullet$]%\itemsep 5pt
%\item  \emph{unimodal} if  $ a_0 \leq a_1 \leq \cdots \leq a_{k-1} \leq a_k \geq a_{k+1} \geq \cdots \geq a_{n}$ for some $0 \leq k \leq n$.
%\item  \emph{log-concave} if $a_{k-1}a_{k+1} \leq a_k^2$ for all $0<k<n$.
%\item \emph{flawless} if $a_k \le a_{d-k}$ for all $0 \le k \le d/2$.
%\end{enumerate}
The authors of \cite{AHK} and \cite{ADH} proved the following results,
%Adiprasito, Huh, and Katz \cite{AHK} and the authors \cite{ADH} proved
%the following results, 
conjectured by Mason and Hoggar 
\cite{Mason72, Hoggar74} and by Brylawski and Dawson \cite{Brylawski82, Dawson83}, respectively.

\begin{theorem} \label{th:flogconcave}
The following hold for any matroid $\M$.
\begin{enumerate}[(1)]\itemsep 5pt
\item
The $f$-vectors of $\IN(\M)$ and $\BC(\M)$ are log-concave \cite{AHK}.
%unimodal, log-concave, and flawless.  %More strongly,
\item The $h$-vectors of $\IN(\M)$ and $\BC(\M)$ are log-concave \cite{ADH}.
%unimodal, log-concave, and flawless.
\end{enumerate}
\end{theorem}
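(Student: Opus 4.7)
The unified plan is to express each entry of the relevant $f$- or $h$-vector as the degree of a mixed intersection of two nef piecewise linear functions on a tropical fan that satisfies the Hodge--Riemann relations, and then to read off log-concavity as the Khovanskii--Teissier inequality in degree one.

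For part (1), I would follow \cite{AHK}. Up to sign, the $f$-vector of $\BC(\M)$ records the coefficients of the reduced characteristic polynomial of $\M$, and \cite{AHK} realize these coefficients as the mixed intersection numbers $(\alpha^k \beta^{r-k})$ of the ``hyperplane'' class $\alpha$ and the ``reciprocal hyperplane'' class $\beta$ on the Bergman fan of $\M$. Since the Chow ring of the Bergman fan satisfies the degree-one Hodge--Riemann relations, the Khovanskii--Teissier inequality gives $(\alpha^k \beta^{r-k})^2 \ge (\alpha^{k-1} \beta^{r-k+1})(\alpha^{k+1} \beta^{r-k-1})$, proving log-concavity of $f(\BC(\M))$. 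The log-concavity of $f(\IN(\M))$ is then obtained by applying the same argument to a suitable extension of $\M$ whose broken circuit complex has $f$-vector equal to $f(\IN(\M))$.

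For part (2), the plan is to invoke Theorem 1.2 of \cite{ADH} (or its refinement, Theorem \ref{thm:main} above): the identity
\[
\gamma^k \delta^{n-k-1} \cap 1_{\M,\M^\perp} = h_{r-k}(\BC(\M))
\]
expresses every entry of $h(\BC(\M))$ as a mixed intersection degree on the conormal fan of $\M$. Since the conormal fan is Lefschetz by \cite[Theorem 5.27]{ADH} and the piecewise linear functions $\gamma,\delta$ are convex, the degree-one Hodge--Riemann inequality yields
\[
(\gamma^k \delta^{n-k-1})^2 \ge (\gamma^{k-1} \delta^{n-k})(\gamma^{k+1} \delta^{n-k-2}),
\]
which is exactly the log-concavity of $h(\BC(\M))$. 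The log-concavity of $h(\IN(\M))$ then reduces to the broken circuit case by the same sort of extension argument as in part (1).

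The main obstacle, and the technical heart of both \cite{AHK} and \cite{ADH}, is establishing the Hodge--Riemann relations on the Chow rings in question. Since neither the Bergman fan nor the conormal fan is the fan of a smooth projective variety, classical Hodge theory does not apply directly; instead the K\"ahler package must be propagated inductively through a sequence of tropical ``matroidal flips'' as in \cite{AHK}, and, in the conormal setting of \cite{ADH}, one must additionally verify convexity of the specific functions $\gamma$ and $\delta$ and contend with the fact that the conormal fan is not itself the Bergman fan of any matroid.
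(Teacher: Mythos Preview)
The paper does not give its own proof of this theorem; it is stated as background, with the two parts attributed to \cite{AHK} and \cite{ADH} respectively. What the paper does offer, in the paragraph immediately following the statement, is a short chain of reductions: the independence complex of any matroid is the reduced broken circuit complex of another matroid \cite[Theorem~4.2]{Brylawski77}, and for any simplicial complex the log-concavity of the $h$-vector implies that of the $f$-vector \cite[Corollary~8.4]{Brenti}. Thus all four assertions collapse to the single claim that $h(\BC(\M))$ is log-concave, which is then handled exactly as you describe in part~(2): the intersection-theoretic formula $\deg(\gamma^k\delta^{n-k-1})=h_{r-k}(\BC(\M))$ together with the Lefschetz/Hodge--Riemann package on the conormal fan.

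Your sketch is correct, but you organize it differently: you prove part~(1) on its own via the Bergman fan of \cite{AHK}, and then prove part~(2) via the conormal fan of \cite{ADH}, invoking an ``extension argument'' in each half to pass from $\BC$ to $\IN$. This mirrors the historical development and matches the citations attached to the two parts. The paper's route is more economical---one Hodge--Riemann argument rather than two---since Brenti's implication $h\text{-log-concave}\Rightarrow f\text{-log-concave}$ makes the separate Bergman-fan computation for part~(1) unnecessary. Your ``extension argument'' is precisely the Brylawski result the paper cites; it may be worth naming it explicitly.
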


We note that the independence complex of any matroid is the reduced broken circuit complex of another matroid \cite[Theorem 4.2]{Brylawski77}, and,
 for any simplicial complex, the log-concavity of its $h$-vector implies the log-concavity of its $f$-vector \cite[Corollary 8.4]{Brenti}.
Also, for  Theorem \ref{th:flogconcave}, we may suppose that $\M$ has no loops and no coloops.
We thus focus on the $h$-vector of the broken circuit complex of a matroid with no loops and no coloops.
One of the main ingredients in the proof of Theorem \ref{th:flogconcave} in this case is the above-mentioned formula for the $h$-vector \cite[Theorem 1.2]{ADH}, which we strengthen in Theorem \ref{thm:main}.

\begin{remark}
For matroids representable over the field of complex numbers, the intersection theoretic formula for the  $h$-vector of the broken circuit complex was given in
\cite{DGS}, and
the connection to the Chern--Schwartz--MacPherson classes was observed in \cite{HuhML} and \cite{Huh15}.
Varchenko's conjecture  on the number of critical points of products of linear forms \cite{Varchenko},
proved by Orlik and Terao in \cite{OT95},
is equivalent to the central special case of the formula
\[
\delta^{n-1} \cap 1_{\M,\M^\perp}=h_r(\BC(\M)).
\]
Recently, for any matroid, Berget, Eur, Spink, and Tseng proved a very general and closely related formula 
\[
\sum_{i+j+k+l=n} \Bigg(\int_{X_E} \alpha^i  \beta^j  c_k(\mathscr{S}^{\vee}_{\M}) c_l(\mathscr{Q}_{\M})\Bigg) (x+y)x^iy^jz^kw^l= (y+z)^r (x+w)^{n-r+1} T_{\M}\Big(\frac{x+y}{y+z},\frac{x+y}{x+w}\Big),
\]
where $c_k(\mathscr{S}^{\vee}_{\M})$ and $c_l(\mathscr{Q}_{\M})$ are \emph{tautological Chern classes} of $\M$ \cite[Theorem A]{BEST}, and they
used it to give another proof of Theorem \ref{th:flogconcave}.
To deduce Theorem \ref{th:flogconcave} from their formula, %they use a valuative property to reduce a part of the proof to the case of representable matroids and appeal to an algebro-geometric argument, thus avoiding the combinatorics of biflats and biflags.
they used the fact \cite{AFR, DF} that many functions of matroids behave valuatively under matroid polytope subdivisions. This allowed them to reduce key computations to the case of representable matroids and prove them using an algebro-geometric argument, thus avoiding the combinatorics of biflats and biflags.
The final part of their proof also %of  Theorem \ref{th:flogconcave} 
employs the fact, proved in \cite{ADH}, 
 that the conormal fan of a matroid is a Lefschetz fan. %\cite[Theorem 5.27]{ADH}. 
We refer to \cite[Remark 9.9]{BEST} for a detailed comparison of the two proofs of Theorem \ref{th:flogconcave}.
\end{remark}

%sequence $h_{0}(\RBC(\M)), \ldots, h_{r}(\RBC(\M))$.

\begin{example}\label{ex:graphs}
We will use two running examples throughout the paper. 
The first is the graphical matroid
 of the graph $G$ of the pyramid, whose dual  is also the matroid of the pyramid $G^\perp$. The
second is the graphical matroid  of the graph $H$ of the cube,
whose dual is the graphical matroid of the graph
$H^\perp$ of the octahedron. These are shown in Figure
\ref{fig:pyramid} and Figure \ref{fig:cube}. The $f$-vectors and
$h$-vectors of their  broken circuit complexes are shown in Table \ref{table:fh}.\end{example}

\begin{table}[h]
\begin{center}
\begin{tabular}{|l|l|l|}
\hline
 & $f$-vector of $\RBC(\M)$ &%$\RBC(\M(\Gamma))$ &
$h$-vector of $\RBC(\M)$\\
\hline
pyramid $G$ & $(1,7,17,14)$ & $(1,4,6,3)$ \\
cube $H$ & $(1,11, 55, 159, 282, 290, 133)$ & $(1,5,15,29,40,32,11)$\\
\hline
\end{tabular}
\end{center}
\vspace{3mm}
\caption{\label{table:fh}The $f$-vectors and $h$-vectors of the broken circuit complexes of two graphs.}
\end{table}

%\begin{figure}[h]
%\[
%G: \quad \begin{tikzpicture}[baseline=(current bounding box.center),scale=0.75,
%bend angle=25,vertex/.style={circle,draw,inner sep=1.3pt,fill=black}]

%\node[vertex] (a) at (0,0) {};
%\node[vertex] (b) at (2,0) {};
%\node[vertex] (c) at (2,2) {};
%\node[vertex] (d) at (0,2) {};
%\node[vertex] (e) at (1,1) {};

%\draw (a) to node[below] {7} (b);
%\draw (b) to node[right] {6} (c);
%\draw (c) to node[above] {5} (d);
%\draw (d) to node[left] {4} (a);
%\draw (a) to node[right] {3} (e);
%\draw (b) to node[above] {2} (e);
%\draw (c) to node[left] {1} (e);
%\draw (d) to node[below] {0} (e);
%\end{tikzpicture}
%\qquad \qquad 
%G^\perp:\quad\begin{tikzpicture}[baseline=(current bounding box.center),scale=0.75,
%bend angle=25,vertex/.style={circle,draw,inner sep=1.3pt,fill=black}]

%\node[vertex] (a) at (0,0) {};
%\node[vertex] (b) at (2,0) {};
%\node[vertex] (c) at (2,2) {};
%\node[vertex] (d) at (0,2) {};
%\node[vertex] (e) at (1,1) {};

%\draw (a) to node[below] {0} (b);
%\draw (b) to node[right] {1} (c);
%\draw (c) to node[above] {2} (d);
%\draw (d) to node[left] {3} (a);
%\draw (a) to node[right] {4} (e);
%\draw (b) to node[above] {5} (e);
%\draw (c) to node[left] {6} (e);
%\draw (d) to node[below] {7} (e);
%\end{tikzpicture}
%\]
%\caption{The graphical matroid of the square pyramid $G$ and its dual $G^\perp$.}\label{fig:pyramid}
%\end{figure}

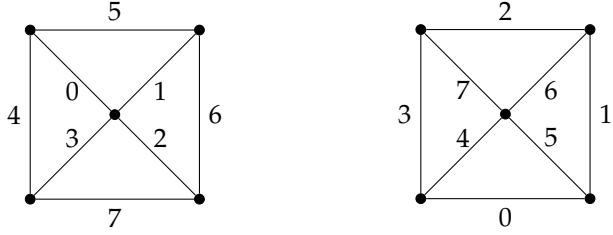
\begin{figure}[h]
\[
 \begin{tikzpicture}[baseline=(current bounding box.center),scale=0.75,
bend angle=25,vertex/.style={circle,draw,inner sep=1.3pt,fill=black}]

\node[vertex] (a) at (-0.5,-0.5) {};
\node[vertex] (b) at (2.5,-0.5) {};
\node[vertex] (c) at (2.5,2.5) {};
\node[vertex] (d) at (-0.5,2.5) {};
\node[vertex] (e) at (1,1) {};

\draw (a) to node[below] {7} (b);
\draw (b) to node[right] {6} (c);
\draw (c) to node[above] {5} (d);
\draw (d) to node[left] {4} (a);
\draw (a) to node[above] {3\ } (e);
\draw (b) to node[above] {\ 2} (e);
\draw (c) to node[below] {\ 1} (e);
\draw (d) to node[below] {0\ } (e);
\end{tikzpicture}
\qquad \qquad
\qquad
\begin{tikzpicture}[baseline=(current bounding box.center),scale=0.75,
bend angle=25,vertex/.style={circle,draw,inner sep=1.3pt,fill=black}]

\node[vertex] (a) at (-0.5,-0.5) {};
\node[vertex] (b) at (2.5,-0.5) {};
\node[vertex] (c) at (2.5,2.5) {};
\node[vertex] (d) at (-0.5,2.5) {};
\node[vertex] (e) at (1,1) {};

\draw (a) to node[below] {0} (b);
\draw (b) to node[right] {1} (c);
\draw (c) to node[above] {2} (d);
\draw (d) to node[left] {3} (a);
\draw (a) to node[above] {4\ } (e);
\draw (b) to node[above] {\ 5} (e);
\draw (c) to node[below] {\ 6} (e);
\draw (d) to node[below] {7\ } (e);
\end{tikzpicture}
\]
\caption{The graph $G$ of the pyramid and its dual graph $G^\perp$.}\label{fig:pyramid}
\end{figure}

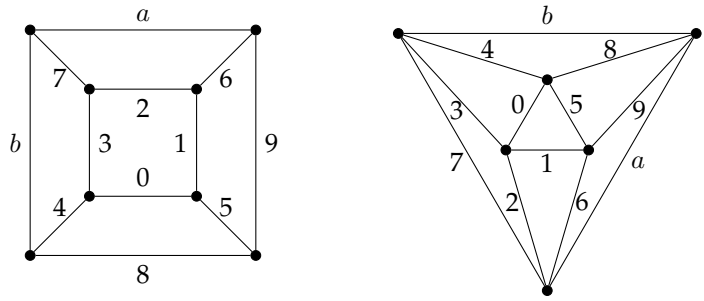
\begin{figure}[h]
\[
%H:\quad
\begin{tikzpicture}[baseline=(current bounding box.center),scale=0.75,
bend angle=25,vertex/.style={circle,draw,inner sep=1.3pt,fill=black}]

\def\del{0.95}
\node[vertex] (a) at (-\del,-\del) {};
\node[vertex] (b) at (-\del,\del) {};
\node[vertex] (c) at (\del,\del) {};
\node[vertex] (d) at (\del,-\del) {};
\node[vertex] (e) at (-2,-2) {};
\node[vertex] (f) at (-2,2) {};
\node[vertex] (g) at (2,2) {};
\node[vertex] (h) at (2,-2) {};

\draw (a) to node[right] {3} (b);
\draw (b) to node[below] {2} (c);
\draw (c) to node[left] {1} (d);
\draw (d) to node[above] {0} (a);
\draw (e) to node[left] {$b$} (f);
\draw (f) to node[above] {$a$} (g);
\draw (g) to node[right] {9} (h);
\draw (h) to node[below] {8} (e);
\draw (a) to node[above] {4} (e);
\draw (d) to node[above] {5} (h);
\draw (c) to node[below] {6} (g);
\draw (b) to node[below] {7} (f);
\end{tikzpicture}
\qquad
\qquad
%H^\perp:
\begin{tikzpicture}[baseline=(current bounding box.center),scale=1.10,
bend angle=25,vertex/.style={circle,draw,inner sep=1.3pt,fill=black}]

\node[vertex] (a) at (-0.5,0) {};
\node[vertex] (b) at (0.5,0) {};
\node[vertex] (c) at (0,0.85) {};
\node[vertex] (d) at (1.8,1.41) {};
\node[vertex] (e) at (-1.8,1.41) {};
\node[vertex] (f) at (0,-1.7) {};

\draw (a) to node[below, yshift=.08cm] {1} (b);
\draw (b) to node[above right, xshift=-.1cm, yshift=-.1cm] {5} (c);
\draw (c) to node[above left, xshift=.1cm, yshift=-.1cm] {0} (a);
\draw (d) to node[above] {$b$} (e);
\draw (e) to node[left] {7} (f);
\draw (f) to node[right] {$a$} (d);
\draw (d) to node[below, xshift=-.04cm] {9} (b);
\draw (d) to node[below, yshift=.35cm, xshift=-0.15cm] {8} (c);
\draw (e) to node[above, xshift=.2cm, yshift=-.15cm] {4} (c);
\draw (e) to node[above, xshift=.06cm, yshift=-.5cm] {3} (a);
\draw (f) to node[above, xshift=-.19cm] {2} (a);
\draw (f) to node[above, xshift=.19cm
] {6} (b);

\end{tikzpicture}
\]
\caption{The graph $H$ of the cube and its dual graph $H^\perp$.%The graphical matroid of the cube $H$ and its dual, the octahedron $H^\perp$.
}\label{fig:cube}
\end{figure}

\subsection{Lagrangian combinatorics of matroids.}\label{sec:introgeom}

This paper explores the algebraic-combinatorial structure of the
conormal Chow ring $\AAA$ introduced in \cite[Section 3.5]{ADH}. 
The conormal Chow ring is an extension
of the Chow ring $A_{\M}$   studied in
\cite{FY04} and \cite{AHK}. 
We recall the central combinatorial notions from \cite{ADH}. 
%needed to define the conormal Chow ring.

\begin{enumerate}[$\bullet$]\itemsep 5pt%[leftmargin=*]
\item
A \emph{biflat} $F|G$ of $\M$ consists of a flat $F$ of $\M$ and a
flat $G$ of the dual matroid $\M^\perp$ such that they are nonempty, they are not both
equal to $E$, and their union is $E$. 
% We write $\RR_{\M,\M^\perp}$ for the set of all biflats of $\M$.

\item
Two biflats $F|G$ and $F'|G'$ of $\M$ are \emph{compatible} if
\[
\text{($F \subseteq F'$ and $G \supseteq G'$)} \ \  \text{or} \ \  \text{($F \supseteq F'$ and $G \subseteq G'$)}.
\]

\item
A \emph{biflag} of $\M$ is a collection $\F|\G$ of pairwise compatible biflats of $\M$ satisfying
\[
\bigcup_{F|G \in \F | \G} F \cap G \neq E.
\]

The \emph{length} of $\F|\G$ is $k$, the number of biflats it contains.  
\end{enumerate}

\begin{remark}
Let $\F$ be an increasing sequence of $k$ nonempty flats of $\M$, say
\[
\F=(\emptyset \subsetneq F_1 \subseteq \cdots \subseteq F_k \subseteq E),
\]
and let $\G$ be a decreasing sequence of $k$ nonempty flats of $\M^\perp$, say
\[
\G=( E \supseteq G_1 \supseteq \cdots \supseteq G_k \supsetneq \emptyset).
\]
Then the collection $\F|\G$ consisting of the pairs $F_1|G_1,\ldots,F_k|G_k$ is a biflag of $\M$ if and only if 
%Equivalently, a biflag $\F|\G$ of length $k$ %= \{F_1|G_1, \ldots, F_k|G_k\}$ 
%is an increasing flag of nonempty flats $\F=\{F_1 \subseteq \cdots \subseteq
%F_k\}$ and a decreasing flag of nonempty coflats $\G = \{G_1 \supseteq \cdots
%\supseteq G_k\}$ such that 
\[
\text{$F_j \cup G_j = E$ for all $1 \leq j \leq k$ and $F_j \cup
 G_{j+1} \neq E$  for some $0 \leq j \leq k.$}
\]
See \cite[Proposition 2.15]{ADH} for a straightforward verification. %of the equivalence.
%where $F_0=G_{k+1}=\emptyset$. %$F_0|G_0=\emptyset|E$ and $F_{k+1}|G_{k+1}=E|\emptyset$.
%We mark the \emph{double jumps} where $F_{j} \blue{\subsetneq} F_{j+1}$ and $G_{j} \blue{\supsetneq} G_{j+1}$. Each index $j$ such that  $F_j \cup  G_{j+1} \neq E$ provides one such double jump.
\end{remark}

\begin{example} \label{ex:biflatbiflag}
We write $\F|\G$ as a table with rows $\F$ and $\G$, augmented with the columns $\emptyset|E$ and $E|\emptyset$. 
For example, for the square pyramid graph $G$ of Figure \ref{fig:pyramid}, $01256|1347$
is a biflat and 
\[
\begin{array}{|c|c|c|}
\hline
\emptyset  & 01256 &  E \\
E &  1347  & \emptyset \\
\hline
\end{array}
 \subset  
\begin{array}{|c|cccc|c|}
\hline
\emptyset & 5 & 56 & 01256 & E & E \\
E & E &  E & 1347 & 3 & \emptyset \\
\hline
\end{array}
 \subset  
\begin{array}{|c|cccccc|c|}
\hline
\emptyset & 5 & 56 & 01256 & 01256 & E & E & E \\
E & E & E &  1347 & 347 & 347 & 3 & \emptyset \\
\hline
\end{array}
%\begin{array}{|c|ccccccc|c|}
%\hline
%\emptyset & 5 & 56 &  \blue{\subsetneq} & 01256 & 01256 & E & E & E \\
%E & E & E & \blue{\supsetneq} &  1347 & 347 & 347 & 3 & \emptyset \\
%\hline
%\end{array}
\]
are three biflags of biflats; the third one is maximal.
\end{example}

The biflats and biflags of a matroid encode the combinatorics of the
conormal fan, described in  \cite[Section 3.4]{ADH}. Our main
algebraic object of interest is the Chow ring of the conormal fan, which we
now define independently.  We begin with a polynomial ring $\SSS $ with  real coefficients and variables $x_{F|G}$
indexed by
the biflats of $\M$.  
For any set of biflats $\F|\G$, we consider the monomial
\[
\displaystyle x_{\F|\G} = \prod_{F|G \in \F|\G} x_{F|G}.
\]
We also define, for every element $i$ in the ground set  $E$, the linear forms%
%\begin{equation} \label{eq:gammadelta}
\[
\gamma_i =\sum_{%\substack{F|G\in \RR_{\M,\M^\perp}\\
i\in F, \, F\neq E} x_{F|G}, \qquad
\og_i =\sum_{%\substack{F|G\in \RR_{\M,\M^\perp}\\
i\in G, \, G\neq E} x_{F|G}, \qquad
\delta_i =\sum_{%\substack{F|G\in \RR_{\M,\M^\perp}\\
i\in F \cap G} x_{F|G}.
\]
These linear forms correspond to certain convex piecewise linear functions on the conormal fan of $\M$ \cite[Section 3.4]{ADH}.
%\end{equation}
We write
\begin{enumerate}[$\bullet$]\itemsep 5pt
\item $\III$ for the ideal generated by the monomials $x_{\F|\G}$, where $\F|\G$ is not a biflag, and
\item $\JJJ$ for the ideal generated by the linear forms $\gamma_i-\gamma_j$ and  $\og_i-\og_j$, for any $i$ and $j$ in $E$.
\end{enumerate}

\begin{definition}\label{def:Chowring} %\cite{ADH}
The \emph{conormal Chow ring} of  $\M$ is the quotient
\[
\AAA = \SSS/(\III + \JJJ).
\]
The equivalence classes of $\gamma_i$ and $\og_i$ in the conormal Chow ring do not depend on $i$.
We denote these classes by $\gamma$ and $\og$, respectively.
%where
%\begin{eqnarray*}
%\SSS &\coloneq & \R[x_{F|G} \, : \, F|G \text{ is a biflat of } \M], \\
%\III &\coloneq & \langle x_{\F|\G} \, : \, \F|\G \text{ is not a biflag of } \M \rangle, \\
%\JJJ &\coloneq & \langle \gamma_i - \gamma_j, \,\, \og_i - \og_j \, \, : \, i,j \in E \rangle.
%\end{eqnarray*}
\end{definition}

%We write $\gamma$ and $\og$ for the equivalence class of
%$\gamma_i$ and $\og_i$, respectively, in the conormal Chow ring.
We note that the equivalence class of  $\delta_i$ in the conormal Chow ring also do not depend on $i$:
For every biflat $F|G$, the element $i$ must be in $F$ or in $G$, and hence %can't have $i \notin F$ and $i \notin G$, 
\[
\delta_i = \gamma_i + \og_i - \sum_{
F \neq E,  G\neq E} x_{F|G}.
\]
We write $\delta$ for the class of these elements in the conormal Chow ring. %$\AAA$.
In \cite[Section 3.5]{ADH},
we constructed the \emph{degree map}
\[
\deg: A_{\M,\M^\perp}^{n-1} \longrightarrow \mathbb{R}, \qquad x_{\F|\G} \longmapsto x_{\F|\G}  \cap 1_{\M,\M^\perp}= \begin{cases}  1 &\text{if $\F|\G$ is a biflag,} \\ 0 &\text{if $\F|\G$ is not a biflag.} \end{cases}
\]
In \cite[Theorem 1.2]{ADH}, 
%using the theory of Chern--Schwartz--MacPherson cycles of matroids \cite{LRS17},
 we gave the following interpretation of $h(\BC(\M))$ in terms of the conormal intersection theory of $\M$:
%For any matroid $\M$ of rank $r+1$ on $n+1$ elements,
\[
\deg(\gamma^k \delta^{n-k-1}) =\gamma^k \delta^{n-k-1} \cap 1_{\M,\M^\perp}= h_{r-k}(\BC(\M)) \ \ \text{for $0 \le k \le r$.}
\]
Our goal is to give a bijective proof of this numerical identity.

\textbf{Acknowledgments.}
The first author thanks the Mathematical Sciences Research Institute, the Simons Institute for the Theory of Computing, the Sorbonne Universit\'e, the Universit\`a di Bolo\-gna, and the Universidad de Los Andes for providing wonderful settings to work on this project, and Felipe Rinc\'on for valuable conversations on this topic; his research is partially supported by NSF grant DMS-1855610 and Simons Fellowship 613384.
The second author's research is supported by NSERC of Canada.
The third author is partially supported by NSF Grant DMS-2053308 and the Simons Investigator Grant.

\section{Biflats and biflags of matroids.}

Throughout the paper, we will fix a loopless and coloopless matroid $\M$ with ground set $E$ of size $n+1$ and rank $r+1$. The dual matroid $\M^\perp$ has rank $r^\perp+1 \coloneq  (n+1)-(r+1)=n-r$.

\subsection{Combinatorics of biflags.}\label{ss:conormal}

The following lemma is the combinatorial manifestation of the fact that the conormal fan is pure of dimension $n-1$. 
%We present a combinatorial 
Its proof introduces several ideas that will be useful in what follows.

\begin{lemma}\label{lem:maximal}
Every maximal biflag of $\M$ has length $n-1$. 
\end{lemma}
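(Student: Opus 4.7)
The plan is to prove the upper bound $k \leq n-1$ by a rank-counting argument, and then to show that any biflag of length $k < n-1$ admits an extension.

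Using the remark, we describe a biflag $\F|\G$ of length $k$ as a pair of chains $\emptyset \subsetneq F_1 \subseteq \cdots \subseteq F_k \subseteq E$ of flats of $\M$ and $E \supseteq G_1 \supseteq \cdots \supseteq G_k \supsetneq \emptyset$ of flats of $\M^\perp$, with $F_j \cup G_j = E$ for every $j$ and $F_{j_0} \cup G_{j_0+1} \neq E$ for some $0 \leq j_0 \leq k$ (with the conventions $F_0 = \emptyset$ and $G_{k+1} = \emptyset$). Set $a_j = \rank_\M(F_j)$ and $b_j = \rank_{\M^\perp}(G_j)$. Distinctness of consecutive biflats gives $(a_{j+1}-a_j) + (b_j-b_{j+1}) \geq 1$ at each step, and the loopless and coloopless hypotheses on $\M$ give $a_1 \geq 1$ and $b_k \geq 1$; trivially $a_k \leq r+1$ and $b_1 \leq n-r$.

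Now fix $e \in E$ with $e \notin F_{j_0} \cup G_{j_0+1}$. If $j_0 = 0$, then $G_1 \neq E$, so $b_1 \leq n-r-1$; summing the step inequalities gives
\[
k-1 \leq (a_k-a_1) + (b_1-b_k) \leq r + (n-r-2) = n-2,
\]
whence $k \leq n-1$. The case $j_0 = k$ is symmetric. If $0 < j_0 < k$, then $F_{j_0+1} \cup G_{j_0+1} = E$ combined with $e \notin G_{j_0+1}$ forces $e \in F_{j_0+1}$; together with $e \notin F_{j_0}$ this gives $F_{j_0} \subsetneq F_{j_0+1}$, i.e., $a_{j_0+1} > a_{j_0}$. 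Symmetrically, $e \in G_{j_0} \setminus G_{j_0+1}$ yields $b_{j_0} > b_{j_0+1}$. So the $j_0$-th step contributes at least $2$ rather than $1$, and summing produces $k \leq (a_k-a_1) + (b_1-b_k) \leq r + (n-r-1) = n-1$.

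For the matching lower bound, the plan is to show that any biflag with $k < n-1$ can be enlarged. The inequalities above become tight in a maximal biflag, pinning down its structure: boundary flats at extremal ranks, and rank-one steps in exactly one of $\F$ or $\G$, with the "diagonal" step allowed at the pivot $j_0$ in the third case. When this structure fails --- some boundary rank is not extremal, or some step has excess rank change --- one can insert an intermediate flat between consecutive $F_j \subsetneq F_{j+1}$ or $G_j \supsetneq G_{j+1}$, or add a biflat at the boundary. The main obstacle is verifying the biflag condition $\bigcup_j F_j \cap G_j \neq E$ under insertion: the inserted biflat $F'|G'$ enlarges this union, so $F'$ or $G'$ must be chosen so that the witness $e$ lies outside $F' \cap G'$. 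Using that $e \notin F_j$ for $j \leq j_0$ and $e \notin G_j$ for $j \geq j_0+1$, one can place the insertion on the appropriate side of the pivot $j_0$ so that $e$ continues to avoid the new intersection, yielding a strictly larger biflag and contradicting maximality.
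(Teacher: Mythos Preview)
Your upper bound is correct and is essentially the paper's argument, reorganized: both count ranks along the two chains, observe that each of the $k+1$ steps (in the paper's indexing, including the boundary steps) contributes at least $1$ to a total of $n+1$, and that the step witnessing $F_{j_0}\cup G_{j_0+1}\neq E$ contributes at least $2$.

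The lower-bound sketch has a genuine gap. Your proposed extension is to ``insert an intermediate flat between consecutive $F_j\subsetneq F_{j+1}$ or $G_j\supsetneq G_{j+1}$'' whenever a step carries excess rank change. This works when some single increment satisfies $a_{j+1}-a_j\ge 2$ or $b_j-b_{j+1}\ge 2$, since then a strictly intermediate flat exists in one chain. But the excess at a non-pivot step $j\neq j_0$ can equally well come from $a_{j+1}-a_j=b_j-b_{j+1}=1$: both chains jump by exactly one rank simultaneously, and there is \emph{no} intermediate flat in either chain to insert. Your sketch does not cover this case.

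The paper treats exactly this situation as its Case~(2): when every single-chain increment is at most $1$, the telescoping sum $n+1$ with fewer than $n$ summands forces at least \emph{two} double-jump positions. One of them serves as the gap witness; at the other, one inserts the ``diagonal'' biflat $F_{j+1}\,|\,G_j$ assembled from the already-present flats (no new intermediate flat is created), and the untouched gap at the other double jump certifies that the enlarged collection remains a biflag. One must also check that $F_{j+1}\,|\,G_j\neq E\,|\,E$, which follows from the existence of the second double jump. This diagonal insertion is the missing idea in your argument.
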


\begin{proof}
Let  $\F | \G$ be a biflag consisting of biflats $F_1|G_1, \ldots, F_k|G_k$, say
\[
\F=(\emptyset \subsetneq F_1 \subseteq \cdots \subseteq F_k \subseteq E) \ \ \text{and} \ \ \G=(E \supseteq G_1 \supseteq \cdots \supseteq G_k \supseteq \emptyset).
\]
For every $i$, we write $r_i$ for the rank of $F_i$ in $\M$ and $r_i^\perp$ for the rank of $G_i$ in $\M^\perp$. 
% $F_1 \subseteq \cdots \subseteq F_k$ and $G_1 \supseteq \cdots \supseteq G_k$. Let $r_i = r_{\M}(F_i)$ and $r^\perp_i = r_{\M^\perp}(G_i)$ for all $i$. %$1 \leq i \leq k$. 
We have
%\begin{equation} \label{eq:rankjumps}
\[
\sum_{i=1}^{k+1} \left((r_i - r_{i-1}) + (r^\perp_{i-1} - r^\perp_i)\right) =
% \sum_{i=1}^{k+1} (r_i - r_{i-1}) + \sum_{i=1}^{k+1} (r^\perp_{i-1} - r^\perp_i) = 
(r+1) + (r^\perp+1) = n+1.
\]
%\end{equation}
If $j$ is an index satisfying $F_j \cup G_{j+1} \neq E$, then we must have $F_j \neq F_{j+1}$ and $G_j \neq G_{j+1}$, so 
\[
(r_j - r_{j-1}) + (r^\perp_{j-1} - r^\perp_j) \geq 2.
\]
Since every summand of $n+1$ % \eqref{eq:rankjumps}
 is positive, there can be at most $n$ summands, so $k \leq n-1$.

We now show that $\F|\G$ is not maximal  if $k<n-1$. We consider two cases separately.

%%\smallskip 

\begin{enumerate}[(1)]\itemsep 5pt
\item We have $r_i - r_{i-1} \geq 2$ or $r^\perp_{i-1} - r^\perp_i \geq 2$ for some $i$. %(or, analogously, $r^\perp_{i-1} - r^\perp_i \geq 2$). 
\end{enumerate}
%%\smallskip 

Suppose $r_i - r_{i-1} \geq 2$.
Choose any flat $F$ with $F_{i-1} \subsetneq F \subsetneq F_i$. 
If $F \cup G_i \neq E$, then $F|G_{i-1}$ is a biflat, and hence $\F^+ | \G^+ \coloneq  \F|\G \cup F|G_{i-1}$ is a biflag. %because $F \cup G_i \neq E$.
On the other hand, if $F \cup G_i = E$, then $F|G_i$ is a biflat, and $\F^+ | \G^+ \coloneq \F|\G \cup F|G_i$ is a biflag. %if we had $F_{i-1} \cup G_i \neq E$ in $\F|\G$, we still have  $F_{i-1} \cup G_i \neq E$ in $\F^+|\G^+$.

%%\smallskip 

\begin{enumerate}[(1)]\itemsep 5pt
\item[(2)] We have $r_i - r_{i-1} \leq 1$ and $r^\perp_{i-1} - r^\perp_i \leq 1$ for all $i$. 
\end{enumerate}
%%\smallskip 

In this case, $(r_i - r_{i-1}) + (r^\perp_{i-1} - r^\perp_i) \leq 2$
for all $i$, so at least two summands of $n+1$ %\eqref{eq:rankjumps} 
are equal to 
$2$. Therefore, at least two values $i=j,j'$ satisfy $r_i - r_{i-1} =
r^\perp_{i-1} - r^\perp_i = 1$, and hence $F_{i-1} \subsetneq F_i$ and
$G_{i-1} \supsetneq G_i$. At least one of these, say $j'$, satisfies
$F_{j'-1} \cup G_j' \neq E$. Consider the other one; $F_j|G_{j-1}$ is
a biflat, and $\F^+ | \G^+ \coloneq  \F|\G \cup F_j|G_{j-1}$ is a biflag,
because it satisfies $F_{j'-1} \cup G_j' \neq E$.
\end{proof}

%As shown in
%\cite{ADH}, its underlying set is the product of the two dual tropical
%linear spaces
%\[
%\trop(\M) \times \trop(\M^\perp) \subseteq \N_{E,E}.
%\]
%where $\M^\perp$ is the matroid dual to $\M$.  Consider
%\[
%\trop(\delta)=\set{ (z,w) \colon \text{$\min_{i \in E}\{z_i+w_i\}$ is
%    achieved at least twice}} \subseteq \N_{E,E}.
%\]
%This is a tropical hypersurface with a natural fan structure
%$\Delta_\delta$, where we stratify the points $(z,w) \in
%\trop(\delta)$ according to the set of (at least two) elements $i \in
%E$ for which $z_i+w_i$ is minimal.

%% HERE

We will often use the following basic result \cite[Lemma 3.15]{ADH}. %next result often. 

\begin{lemma}%\cite[Lemma 3.15]{ADH}
\label{lem:big unions}
If $F$ and $G$ are nonempty flats of $\M$ and $\M^\perp$
respectively, then $\abs{F\cup G}\neq n$.
\end{lemma}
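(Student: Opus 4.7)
The plan is to prove Lemma \ref{lem:big unions} by contradiction, using the classical fact that a circuit and a cocircuit of the same matroid never intersect in exactly one element.

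First I would suppose, for contradiction, that $|F \cup G| = n$, so that $E \setminus (F \cup G)$ consists of a single element, call it $i$. Note $i \notin F$ and $i \notin G$, and in particular $F,G \subsetneq E$.

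Next I would use the standard characterization of flats via cocircuits: the complement of a flat is a union of cocircuits. Since $F$ is a flat of $\M$ and $i \in E \setminus F$, there is a cocircuit $C^\ast$ of $\M$ with $i \in C^\ast \subseteq E \setminus F$. Dually, since $G$ is a flat of $\M^\perp$ and $i \in E \setminus G$, there is a cocircuit of $\M^\perp$, i.e.\ a circuit $C$ of $\M$, with $i \in C \subseteq E \setminus G$.

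The finishing step is to observe
\[
C \cap C^\ast \subseteq (E \setminus F) \cap (E \setminus G) = E \setminus (F \cup G) = \{i\},
\]
and since $i$ lies in both sets we get $C \cap C^\ast = \{i\}$. This contradicts the well-known fact that a circuit and a cocircuit of the same matroid cannot meet in exactly one element. The only real subtlety is keeping the duality straight, namely remembering that flats of $\M$ correspond (via complements) to unions of circuits of $\M^\perp$ and vice versa; the loopless/coloopless hypothesis is not needed here, only that $F$ and $G$ are nonempty so that the relevant cocircuits exist in $E \setminus F$ and $E \setminus G$ through $i$.
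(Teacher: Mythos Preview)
Your proof is correct. The paper itself does not prove this lemma here; it merely quotes it as \cite[Lemma 3.15]{ADH}. Your argument via circuit--cocircuit orthogonality is the standard one, and the paper in fact treats the lemma as essentially synonymous with that fact: in the proof of Proposition/Definition~\ref{def:P_M(S)} it writes ``$C\cap C^\perp=\{f\}$, which is impossible by Lemma~\ref{lem:big unions}.''

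A minor quibble with your closing remark: the nonemptiness of $F$ and $G$ is not what guarantees the existence of the cocircuits through $i$. What you actually use is that $i\notin F$ with $F$ a flat of $\M$, which already forces $i$ not to be a loop and (via the hyperplane description of closure) yields a cocircuit of $\M$ through $i$ disjoint from $F$; the dual statement handles $G$. Nonemptiness plays no role, so your argument in fact establishes the lemma without that hypothesis.
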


%\begin{proof}
%If $F$ and $G$ are hyperplanes of $\M$ and $\M^\perp$, respectively,
%then $E-F$ is a cocircuit and $E-G$ is a circuit of $\M$, and then it
%is well known \cite[Prop. 2.11]{Oxbook} that $\abs{(E-F)\cap
%  (E-G)}\neq 1$.
%
%For the general case, assume that $F$ and $G$ are flats of $\M$ and
%$\M^\perp$ with $\abs{F \cup G} = n$. We may write $F=H_1\cap\cdots\cap
%H_a$ and $G=K_1\cap\cdots\cap K_b$ for hyperplanes $H_1, \ldots, H_a$
%of $\M$ and $K_1, \ldots, K_b$ of $\M^\perp$.  Then $F\cup G\subseteq
%H_i\cup K_j$ for all $i$ and $j$; but then $\abs{H_i \cup K_j} \neq n$
%implies $H_i\cup K_j=E$. We conclude that $F\cup
%G=\bigcap_{i,j}(H_i\cup K_j)=E$, a contradiction.
%\end{proof}
%
%
%
%%\bigskip

We close this section with some definitions that will be useful throughout the paper. Let $\F|\G$ be a biflag of $\M$. As above, we write $\F$ and $\G$ for the flags
\[
\F=(\emptyset \subsetneq F_1 \subseteq \cdots \subseteq F_k \subseteq E) \ \ \text{and} \ \ \G=(E \supseteq G_1 \supseteq \cdots \supseteq G_k \supseteq \emptyset),
\]
%\begin{eqnarray*}
%\F &=& (\varnothing \subsetneq F_1 \subseteq \cdots \subseteq F_k \subseteq E), \ \  \text{where the $F_j$s are the first parts of the biflats in $\F|\G$}, \\
%\G &=& (E \supseteq G_1 \supseteq \cdots \supseteq G_k \supsetneq \varnothing), \ \  \text{where the $G_j$s are the second parts of the biflats in $\F|\G$},
%\end{eqnarray*}
where $k$ is the length of $\F|\G$, and $F_0 = G_{k+1} = \emptyset$ and $F_{k+1} = G_0 = E$. 
 
 \begin{definition} \label{def:gaps}
The \emph{gap sequence} of $\F|\G$, denoted $\D(\F|\G)$,
is the sequence of \emph{gaps}
% \begin{eqnarray*}
\[
 D_0|D_1|\cdots|D_k, \ \  \text{where }  \ \  D_j \coloneq  (F_{j+1} - F_j) \cap (G_j - G_{j+1}) 
 = E-(F_j\cup G_{j+1}).
 \]
% \end{eqnarray*}
%for $0 \leq j \leq k$.
 \end{definition}

%\begin{example}
%The three maximal flags of biflats shown in Example \ref{ex:criticalfan} have the gap sequences
%\[
%\varnothing|\varnothing|\varnothing|\varnothing|\varnothing|67|\varnothing, \quad
%\varnothing|\varnothing|34|\varnothing|\varnothing|\varnothing|\varnothing, \quad
%\varnothing|\varnothing|34|\varnothing|\varnothing|\varnothing|\varnothing.
%\]
%We show in Proposition \ref{prop:doublejump}  that any maximal flag of biflats  has a unique nonempty gap.
%\end{example}

The notion of gaps appears naturally in the model of the conormal fan as a
configuration space \cite[Section 2]{ADH}.  
%If we regard $(z,w) \in \Sigma_{\M, \M^\perp}$ as an $E$-tuple $p$ of points in the conormal fan, then the nonempty gaps are precisely the clusters of points lying on the supporting line $\ell(p)$. 
 Each gap must have size at least $2$, as we noted above.

\begin{definition} \label{def:jumpsets}
The \emph{jump sets} of $\F|\G$ are
\begin{align*}
\j(\F)&=\set{j\mid 0\leq j\leq k \text{~and~} F_j \subsetneq F_{j+1}}, \\
\j(\G)&=\set{j\mid 0\leq j\leq k \text{~and~} G_j \supsetneq G_{j+1}}.
\end{align*}
The \emph{double jump set} is $\j(\F) \cap \j(\G)$; the elements of these sets are called \emph{jumps} of $\F$ and $\G$ and \emph{double jumps} of $\F|\G$, respectively.
\end{definition}

If the gap $D_j$ is nonempty, then $j$ must be a double jump. Thus, by definition, every biflag has at least one double jump.
The double jumps of a biflag $\F|\G$ play an important role throughout the paper, so we mark them 
$\blue{\subsetneq}$ and $\blue{\supsetneq}$ in the table for $\F|\G$. 

\begin{example} %The biflag of Figure \ref{fig:pointconfig}
Consider the biflag $\F|\G$ given by the table 
\[
\begin{array}{|c|cccccc|c|}
\hline
\emptyset & 5 & 56 & \blue{\subsetneq} & 01256 & \blue{\subsetneq} & E & E \\
E & E &  E & \blue{\supsetneq} & 1347 &\blue{\supsetneq} & 3 & \emptyset \\
\hline
\end{array} \ .
\]
The biflag has jump sets $\j(\F) = \{0,1,2,3\}$ and $\j(\G) = \{2,3,4\}$.
The  set of  double jumps  is $\{2,3\}$, and the gap sequence is $\emptyset|\emptyset|02|47|\emptyset|$.
\end{example}

\begin{lemma}\label{lem:nongaps}
The gaps of $\F|\G$ are pairwise disjoint, and their union is
\[
\bigsqcup_{j=0}^k D_j = E - \bigcup_{i=1}^k \left(F_i \cap G_i\right).
\]
%Equivalently, for any $e\in E$, there exists an index $i$ for which $e\in F_i\cap G_i$ if and only if $e$ is not in any gap.
\end{lemma}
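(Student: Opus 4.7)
The plan is to prove both assertions uniformly by tracking, for each element $e \in E$, a pair of indices that records its location in the two partitions of $E$ coming from $\F$ and $\G$. Since $F_0 = \emptyset$, $F_{k+1} = E$, and the flats $F_i$ are nested, each $e$ belongs to a unique level set, so there is a unique $a(e) \in \set{0,1,\ldots,k}$ with $e \in F_{a(e)+1} \setminus F_{a(e)}$; dually, there is a unique $b(e) \in \set{0,1,\ldots,k}$ with $e \in G_{b(e)} \setminus G_{b(e)+1}$. Directly from the definition, $e \in D_j$ if and only if $j = a(e) = b(e)$, so each element lies in at most one gap, which already gives the pairwise disjointness of the $D_j$.

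Next, I would translate membership in $F_i\cap G_i$ into the same language: using nestedness, $e \in F_i$ iff $i > a(e)$, and $e \in G_i$ iff $i \leq b(e)$, so $e \in F_i \cap G_i$ for some $1 \leq i \leq k$ exactly when $a(e) < b(e)$. Together with the previous observation, verifying the desired partition identity
\[
E = \bigsqcup_{j=0}^k D_j \; \sqcup \; \bigcup_{i=1}^k (F_i \cap G_i)
\]
therefore reduces to showing that the remaining possibility, $a(e) > b(e)$, never occurs.

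This last step is where the biflat hypothesis enters and is the crux of the argument. If $a(e) > b(e)$, then $1 \leq b(e)+1 \leq a(e) \leq k$, so $F_{b(e)+1}\mid G_{b(e)+1}$ is a genuine biflat of $\M$. By the definitions of $a(e)$ and $b(e)$, we have $e \notin F_{a(e)} \supseteq F_{b(e)+1}$ and $e \notin G_{b(e)+1}$, whence $e$ lies outside $F_{b(e)+1}\cup G_{b(e)+1}$. But the biflat axioms force $F_{b(e)+1}\cup G_{b(e)+1} = E$, a contradiction. Thus for every $e$ exactly one of $a(e) = b(e)$ and $a(e) < b(e)$ holds, which yields the claimed partition and completes the proof.
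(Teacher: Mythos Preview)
Your proof is correct and follows essentially the same approach as the paper: both track, for each $e\in E$, the level at which it first enters the chain $\F$ and use the covering condition on biflats to show $e$ lies either in some $F_i\cap G_i$ or in a unique gap. Your version is slightly more systematic in introducing the indices $a(e),b(e)$ and invokes the biflat axiom $F_j\cup G_j=E$ directly, whereas the paper leaves disjointness implicit and phrases the key step via the gap definition $D_{i-1}=E\setminus(F_{i-1}\cup G_i)$; these are equivalent formulations of the same elementary argument.
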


\begin{proof}
One easily verifies the first claim. For the second, first suppose $e\in F_i\cap G_i$.  Then $e\in F_j$ for all $j\geq i$, 
which means $e\not \in D_j$ for $i\leq j\leq k$.
Dually, $e\in G_j$ for all $j\leq i$, so $e\not\in D_j$ for all
$0\leq i\leq j-1$.
Now suppose $e$ is not in any gap. In this case, consider the index $1\leq i\leq k+1$ for which $e\in F_i-F_{i-1}$.  Since $e\in F_{i-1}\cup G_i$, we must have $e\in G_i$ and hence $e \in F_i\cap G_i$. 
\end{proof}

\subsection{Extended $\nbc$ biflags.}\label{sec:nbc cones}

In this section, we construct the  biflag $\F(B)|\G(B)$ and the extended biflag $\F^+(B)|\G^+(B)$ associated to each $\nbc$ basis $B$ of $\M$: These are the biflags we need to give a combinatorial formula for $\gamma^k \delta^{n-k-1}$ in Theorem \ref{thm:main}. 
%\[
%\gamma^k \delta^{n-k-1} = \sum_{\substack{B \nbc \textrm{basis} \\ |\IA(B)| = k+1}} x_{\F^+(B)| \G^+(B)} \qquad \text{ for } 0 \leq k \leq r
%\]
%that witnesses Theorem \ref{thm:hvector}.  

We begin by recalling some basic facts about matroids on an ordered ground set.
%%\bigskip
%\noindent  \textbf{\textsf{Ordered matroids and basis activities.}}
%Let $\M$ be a matroid with a total ordering of the ground set; in our examples we will choose the standard order $\{0 < \cdots < n\}$.
For each basis $B\subseteq E$ of $\M$, we write
$
B^\perp \coloneq  E-B$
for the corresponding dual basis of $\M^\perp$. Each $i \notin B$ has a unique dependence on $B$, that is, there is a unique \emph{fundamental circuit} $C(B,i)$ contained in $B\cup i$ and containing $i$. %Clearly $i \in C(B,i)$. 
Dually, for each $i\in B$, there is a unique \emph{fundamental cocircuit} in $B^\perp \cup{i}$
containing $i$, denoted $C^\perp(B,i)$.

\begin{definition}\label{def:activities}
The {\em externally active set} for $B$ in $\M$ is defined to be
%\begin{eqnarray*}
\begin{align*}
\EA(B)&=\set{i \notin B \mid i=\min C(B,i)} \\
&=\set{i \notin B \mid  B \textrm{ is the lexicographically largest basis contained in } B \cup i}.
\end{align*}
%\end{eqnarray*}
Dually, the {\em internally active set} for $B$ in $\M$ is defined to be 
%\begin{eqnarray*}
\begin{align*}
\IA(B) &= \set{i\in B \mid i=\min C^\perp(B,i)} \\
&= \set{i \in B \mid B \textrm{ is the lexicographically smallest basis containing } B - i}.
\end{align*}
%\end{eqnarray*}
%The \emph{external and internal activities} of $B$ are
%\[
%\ea(B) = |\EA(B)|, \qquad \ia(B) = |\IA(B)|
%\]
The \emph{internally passive} and \emph{externally passive sets} of   $B$ are $\IP(B) = B - \IA(B)$ and $\EP(B) = B^\perp - \EA(B)$, respectively.
%More generally, for any subset $S\subseteqE$, we define (correctly?)
%\[
%\EA_{\M}(S)=\set{i\inE-S\colon \text{there exists a circuit $C\subseteq
%B\cup\set{i}$ for which $i=\min C$}}
%\]
%and
%\[
%\IA_{\M}(S)=\set{i\in S\colon \text{there exists a cocircuit $K\subseteq
%(E-S)\cup\set{i}$ for which $i=\min K$}}.
%\]
\end{definition}

Activites behave well with respect to matroid duality: 
The internally active set of $B$ in $\M$ equals the externally active set of $B^\perp$, and
the externally active set of $B$ in $\M$ equals the internally active set of $B^\perp$.
According to 
%\begin{proposition}
 \cite{Tutte67, Cr69},
the \emph{Tutte polynomial} of $\M$ equals
\[
T_{\M}(x,y) = \sum_{B}x^{|\IA(B)|}y^{|\EA(B)|}.
\]
In particular, for any natural numbers $i$ and $e$, the number $t_{i,e}$ of bases $B$ of $\M$ with $|\IA(B)|=i$ and $|\EA(B)|=e$ is independent of the ordering of the ground set $E$.
%\end{proposition}

\begin{definition}\label{def:nbc}
We say a basis $B$ of $\M$ is a \emph{no broken circuit basis} or \emph{$\nbc$ basis} if $\EA(B)=\emptyset$.  This is equivalent to demanding that $B$ does not contain any \emph{broken circuit}, that is, any set of the form $C - \min C$ where $C$ is a circuit. 
\end{definition}

We now associate a biflag to each $\nbc$ basis of $\M$. We will later see how these biflags arise naturally in the Lagrangian combinatorics of ordered matroids.
%present $t_{k+1,0}(\M)$ nice biflags of $\M$ for each $k$ that will play a crucial role in the upcoming sections.

\begin{proposition} \emph{($\nbc$ biflags)} \label{def:nbc biflags}
Let $B$ be a $\nbc$ basis with  $|\IA(B)| = k+1$. 
Define the sequence $\ee(B)=(e_1, \ldots, e_{r-k}, e_{r-k+1}, \ldots, e_{n-k-1})$ by the conditions
%\begin{eqnarray*}
\[
B - \IA(B) = \{e_1 > \cdots > e_{r-k}\}  \ \ \text{and} \ \ 
 B^\perp - \min B^\perp = \{e_{r-k+1} < \cdots < e_{n-k-1}\}.
 \]
%\end{eqnarray*}
Let $\F(B)$ and $\G(B)$ be the flags of length $n-k-1$ given by 
%\begin{eqnarray*}
\[
F_j | G_j = \begin{cases}
\cl\{e_1, e_2, \ldots, e_j\} | E & \text{for $1 \leq j \leq r-k$}, \\
E | \cl^\perp\{e_j, e_{j+1}, \ldots, e_{n-k-1}\} & \text{for $r-k+1 \leq j \leq n-k-1$,}
\end{cases}
\]
where $\cl$ and $\cl^\perp$ stand for the closure operators of $\M$ and $\M^\perp$.
%\end{eqnarray*}
Then $\F(B)| \G(B)$ is a biflag of $\M$.
\end{proposition}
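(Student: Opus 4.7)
The plan is to verify the three defining conditions of a biflag for $\F(B)|\G(B)$ in turn: that each $F_j|G_j$ is a biflat, that consecutive biflats are compatible, and that $\bigcup_j(F_j\cap G_j)\neq E$. The first two conditions should be essentially bookkeeping; the third is where the $\nbc$ hypothesis is needed.

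For the biflat verification, each $F_j$ is the $\M$-closure of a nonempty subset of $E$ and hence a nonempty flat of $\M$, and dually each $G_j$ is a nonempty flat of $\M^\perp$. By construction, one of $F_j$ or $G_j$ is $E$, giving $F_j\cup G_j=E$ automatically. To rule out $F_j=G_j=E$, the key point is that for $j\leq r-k$ the set $\{e_1,\ldots,e_j\}\subseteq\IP(B)\subseteq B$ is independent in $\M$, so $\rank F_j=j\leq r-k\leq r<r+1$ and $F_j\neq E$; dually, for $j\geq r-k+1$ the set $\{e_j,\ldots,e_{n-k-1}\}\subseteq B^\perp$ is independent in $\M^\perp$ with at most $r^\perp$ elements, forcing $G_j\neq E$. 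Compatibility is immediate from the construction, since in the first half the $F$-chain grows one generator at a time while the $G$-chain sits at $E$, and then in the second half $F$ stays at $E$ while the $G$-chain shrinks one generator at a time.

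The heart of the argument is the condition $\bigcup_j(F_j\cap G_j)\neq E$. By the nesting just described, this union collapses to
\[
F_{r-k}\cup G_{r-k+1}=\cl(\IP(B))\cup\cl^\perp(B^\perp-b_0),
\]
where $b_0\coloneq\min B^\perp$, and I plan to exhibit $b_0$ as an element of $E$ lying outside both pieces. Non-membership $b_0\notin\cl^\perp(B^\perp-b_0)$ is immediate from $B^\perp$ being a basis of $\M^\perp$, since the independence of $B^\perp=(B^\perp-b_0)\cup b_0$ prevents $b_0$ from being in the closure of $B^\perp-b_0$. For the other half, I would argue by contradiction: if $b_0\in\cl(\IP(B))$, then since $\IP(B)\cup b_0\subseteq B\cup b_0$ and $b_0\notin B$, there must be a circuit of $\M$ containing $b_0$ and sitting inside $B\cup b_0$, and such a circuit is forced to be the fundamental circuit $C(B,b_0)$.

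This is where the $\nbc$ hypothesis enters, and where I expect the main obstacle lies: one must assemble a clean chain of deductions that ends in a contradiction with the minimality of $b_0$ in $B^\perp$. Explicitly, $\EA(B)=\emptyset$ forces $b_0\neq\min C(B,b_0)$, so some $j<b_0$ lies in $C(B,b_0)$; since $b_0=\min(E-B)$, every element less than $b_0$ lies in $B$, so $j\in B\cap C(B,b_0)\subseteq\IP(B)$. Unpacking the definition of internal passivity at $j$ produces some $j'\in C^\perp(B,j)\subseteq B^\perp\cup j$ with $j'<j$, forcing $j'\in B^\perp$ with $j'<b_0$, contradicting $b_0=\min B^\perp$. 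Once this chain is in hand, the three conditions are verified and the proposition follows.
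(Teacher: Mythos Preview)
Your proof is correct and follows essentially the same route as the paper's: both reduce the biflag condition to showing $\cl(\IP(B))\cup\cl^\perp(B^\perp-b_0)\neq E$ and then exhibit $b_0=\min B^\perp$ as a missing element via the same chain (fundamental circuit of $b_0$, $\nbc$ gives a smaller $j\in\IP(B)$, internal passivity of $j$ gives a smaller $j'\in B^\perp$, contradicting minimality of $b_0$). You spell out the biflat and compatibility verifications that the paper leaves implicit, but the substantive argument is identical.
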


%\noindent 
The biflag
%$\nbcbiflag(B)\coloneq  \sigma_ 
${\F(B)| \G(B)}$ is the \emph{$\nbc$ biflag} associated to $B$.  The table for ${\F(B)| \G(B)}$ reads 
%\begin{equation*}
\[
\hspace{-.5cm}
\begin{array}{|ccccccc|}
\hline
\cl(e_1)  & \cdots &  \cl(e_1, \ldots, e_{r-k}) &
\blue{\subsetneq} & E &  \cdots & E  \\
E  & \cdots &  E & 
\blue{\supsetneq} & \cl^\perp(e_{r-k+1}, \ldots, e_{n-k-1}) &  \cdots &  \cl^\perp(e_{n-k-1}) \\
\hline
\end{array}
\]
%\end{equation*}
We call $x_{\F(B)|\G(B)}$ an \emph{$\nbc$ monomial} of $\M$.

\begin{proof}
Let $x$ be the minimum element of $B^\perp$.  We need to verify that 
\[
\cl(B-\IA(B)) \cup \cl^\perp(B^\perp-x) \neq E.
\]
Assume this is not the case. Since $x \notin \cl^\perp(B^\perp-x)$, we must have $x \in \cl(B-\IA(B))$. Thus the fundamental circuit $C(B,x)$ satisfies $C(B,x) \subseteq B-\IA(B) \cup x$. Since $B$ is $\nbc$, the minimum element of this circuit is some $y < x$. Also, since $y \in B-\IA(B)$, the minimum element in the fundamental cocircuit $C^\perp(B,y)$ is some $z < y$, and hence  $z<x$ as well. 
However, $C^\perp(B,y) \subseteq B^\perp \cup y$ shows that  $z \in B^\perp$, contradicting the minimality of $x$. 
\end{proof}

\begin{example}\label{ex:nbc biflag}
We illustrate the construction above with the example of Figure \ref{fig:cube}. This matroid has $n+1=12$ elements, rank $r+1=7$, and corank $r^\perp+1=5$. Let us order the ground set $0<1<\cdots <9<a<b$.
Consider the $\nbc$ basis 
$B=\green{01}5\green{6}78b$, 
%$B=\{0,1,5,6,7,8,b\}$, 
whose internally active set $\IA(B) = \green{016}$ has $k+1=3$ elements, marked in green. We have $\ee(B) = (b,8,7,5;3,4,9,a)$, and ${\F(B)|\G(B)}$ is given by the table
\[
\begin{array}{|c|ccccccccc|c|}
\hline
\emptyset & {\bf b} & {\bf 8}b & {\bf 7}8b & {\bf 5}78b & \blue{\subsetneq} & E & E & E & E & E\\
E & E & E & E & E & \blue{\supsetneq} & 0{\bf 3}469a & {\bf 4}69a & 6{\bf 9}a & {\bf a} & \emptyset\\
\hline
\end{array} \ .
\]
%\[
%\begin{array}{|c|ccccccccc|c|}
%\hline
%\emptyset & b & 8b & 78b & 578b & \blue{\subsetneq} & E & E & E & E & E\\
%E & E & E & E & E & \blue{\supsetneq} & 03469a & 469a & 69a & a & \emptyset\\
%\hline
%\end{array}
%\]
%describes ${\F(B)|\G(B)}$. 
For each proper flat $F_i$ and proper coflat  $G_i$, 
we have written in bold the new element $e_i$ that is not present in $F_{i-1}$ and $G_{i+1}$, respectively.
\end{example}

We now augment each $\nbc$ biflag to a maximal biflag containing it. 
%The resulting \emph{extended $\nbc$ biflags} will also play a crucial role in the upcoming sections.
Let $B$ be a $\nbc$ basis with
\[
\IA(B) = \{\green{c_1} > \cdots > \green{c_{k+1}}\}.
\]%and $|\IA(B)| = k+1$. 
Define the sequence $\ee(B)$ %=(e_1, \ldots, e_{r-k} ; e_{r-k+1}, \ldots, e_{n-k-1})$ 
and the biflag $\F(B) | \G(B)$ as in Proposition \ref{def:nbc biflags},
and set
\[
S=B-\IA(B) \ \  \text{and} \ \ T = B^\perp - \min B^\perp. 
\]

\begin{proposition}\emph{(Extended $\nbc$ biflags)} \label{def:nbc regions}
Let $i$ be the largest index such that 
\[
c_i \notin \cl(S) \cup \cl^\perp(T),
\]
 or equivalently, the smallest index such that
\[
\cl(S, c_1, \ldots, c_i) \cup  \cl^\perp(T) = E.
\]
We define the flags $\F^+(B)$ and $\G^+(B)$ of length $n-1$ by inserting the following $k$ columns at the double jump between columns $r-k$ and $r-k+1$  of the table for ${\F(B)| \G(B)}$:
\begin{equation*}
%\hspace{-.3cm}
\begin{array}{|ccccccc|}
\hline
\green{\cl(S, c_1)} &  \cdots &  \green{\cl(S, c_1, \ldots, c_{i-1})} &\blue{\subsetneq} &  \green{\cl(S, c_1, \ldots, c_{i-1})}  &  \cdots &   \green{\cl(S, c_1, \ldots, c_k)} \\
\green{E} &  \cdots &  \green{E} & \blue{\supsetneq} &  \green{\cl^\perp(T)} &  \cdots &  \green{\cl^\perp(T)} \\
\hline
\end{array}
\end{equation*}
%\begin{equation*}
%\hspace{-.5cm}
%\begin{array}{|ccccccccccc|}
%\hline
%\cl_M(S, c_1) & \subsetneq & \cdots & \subsetneq & \cl(S, c_1, \ldots, c_{i-1}) &\blue{\subsetneq} &  \cl(S, c_1, \ldots, c_{i-1})  & \subsetneq & \cdots & \subsetneq &  \cl(S, c_1, \ldots, c_k) \\
%E & = & \cdots & = & E & \blue{\supsetneq} &  \cl^\perp(T) & = & \cdots & = & \cl^\perp(T) \\
%\hline
%\end{array}
%\end{equation*}
Explicitly, we define $\F^+(B)|\G^+(B)$ by setting
\begin{align*}
F^+_j | G^+_j &= \begin{cases}
\cl\{e_1, e_2, \ldots, e_j\} | E & \textrm{ for } 1 \leq j \leq r-k, \\
\cl\{S, c_1, \ldots, c_{j-(r-k)}\} | E & \textrm{ for } 1 \leq j-(r-k) \leq i-1, \\
\cl\{S, c_1, \ldots, c_{j-(r-k)}\} | \cl^\perp(T) & \textrm{ for } i \leq j-(r-k) \leq k, \\
E | \cl^\perp\{e_{j-k}, e_{j-k+1}, \ldots, e_{n-k-1}\} & \textrm{ for } 1 \leq j-r \leq n-r-1.
\end{cases}
\end{align*}
Then $\F^+(B)|\G^+(B)$ is a maximal biflag of $\M$.
\end{proposition}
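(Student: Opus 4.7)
The plan is to first verify that the index $i$ is well defined and that the two characterizations in the statement are equivalent, then check that $\F^+(B)|\G^+(B)$ satisfies the chain, biflat, and nonempty-gap conditions of a biflag, and finally conclude maximality from a length count via Lemma \ref{lem:maximal}.

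The crux is the equivalence of the two descriptions of $i$. Setting $S_j = S \cup \{c_1, \ldots, c_j\}$, I would first prove the stronger statement
\[
\cl(S_j) \cup \cl^\perp(T) = E \quad \Longleftrightarrow \quad \{c_{j+1}, \ldots, c_{k+1}\} \subseteq \cl^\perp(T).
\]
The forward direction is immediate, since $S_j \cup c_\ell \subseteq B$ is independent and hence $c_\ell \notin \cl(S_j)$ for $\ell > j$. For the reverse direction, the only element whose membership in $\cl(S_j) \cup \cl^\perp(T)$ is not automatic is $x = \min B^\perp$, and I would show $x \in \cl(S_j)$ by combining the basis-exchange identity $c_\ell \in C(B,x) \Leftrightarrow x \in C^\perp(B,c_\ell)$ with the observation that $C^\perp(B, c_\ell)$ is also the $\M^\perp$-fundamental circuit of $c_\ell$ over $B^\perp$. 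Together these give $c_\ell \in \cl^\perp(T) \Leftrightarrow x \notin C^\perp(B, c_\ell) \Leftrightarrow c_\ell \notin C(B,x)$, so $\{c_{j+1}, \ldots, c_{k+1}\} \subseteq \cl^\perp(T)$ forces $C(B,x) \subseteq S_j \cup \{x\}$, whence $x \in \cl(S_j)$. The case $j = 0$ is excluded by the proof of Proposition \ref{def:nbc biflags} (which established $\cl(S) \cup \cl^\perp(T) \neq E$), while $j = k+1$ vacuously satisfies the condition. Hence $i$ is well defined in $\{1, \ldots, k+1\}$ and simultaneously equals the smallest $j$ with $\cl(S_j) \cup \cl^\perp(T) = E$ and the largest $j$ with $c_j \notin \cl^\perp(T)$.

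With $i$ in hand, the remaining verifications are routine. Monotonicity of the two chains is read off directly from the explicit formulas. The biflat condition $F^+_j \cup G^+_j = E$ is automatic whenever $F^+_j = E$ or $G^+_j = E$, and in the range $i \le j - (r-k) \le k$ where both sides are proper, the inclusion $F^+_j \supseteq \cl(S_i)$ yields $F^+_j \cup \cl^\perp(T) \supseteq \cl(S_i) \cup \cl^\perp(T) = E$, while $F^+_j \subseteq \cl(S_k) = \cl(B - c_{k+1})$ has rank $r$ and is therefore proper. Properness of $F^+_j$ for $j - (r-k) < i$ follows from the minimality of $i$. The extended biflag has a unique double jump, between columns $r-k+i-1$ and $r-k+i$, and its gap equals $E - (\cl(S_{i-1}) \cup \cl^\perp(T))$, which is nonempty again by the minimality of $i$. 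The total length is $(r-k) + k + (n-r-1) = n-1$, so $\F^+(B)|\G^+(B)$ is a maximal biflag by Lemma \ref{lem:maximal}.

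I expect the main obstacle to be the equivalence of the two descriptions of $i$: it is the step that unifies the $F$-side requirement (that $\cl(S_i)$ together with $\cl^\perp(T)$ covers $E$, so the inserted two-sided biflats are genuine) with the $G$-side requirement (that $c_i \notin \cl^\perp(T)$, so the double jump sits at the correct column), and it is where the $\nbc$ hypothesis on $B$ interacts nontrivially with basis exchange duality.
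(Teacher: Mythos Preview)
Your argument is correct, and the biflag verifications you include are fine (the paper simply declares them evident and focuses only on the equivalence of the two descriptions of $i$). The one substantive difference is in how you handle the element $x=\min B^\perp$ when proving $\cl(S_i)\cup\cl^\perp(T)=E$. The paper checks by cases that every $e\neq x$ lies in $U\coloneqq\cl(S_i)\cup\cl^\perp(T)$ and then invokes Lemma~\ref{lem:big unions} to rule out $\abs{U}=n$, forcing $x\in U$. You instead argue directly via fundamental circuit--cocircuit duality that $\{c_{j+1},\ldots,c_{k+1}\}\subseteq\cl^\perp(T)$ forces $C(B,x)\subseteq S_j\cup x$, hence $x\in\cl(S_j)$. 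Your route proves the sharper biconditional $\cl(S_j)\cup\cl^\perp(T)=E\iff\{c_{j+1},\ldots,c_{k+1}\}\subseteq\cl^\perp(T)$, which together with Proposition~\ref{def:nbc biflags} immediately gives that $i$ is well defined; the paper's approach is a shade quicker but relies on the extrinsic Lemma~\ref{lem:big unions} and leaves the existence of some $c_j\notin\cl(S)\cup\cl^\perp(T)$ implicit. One small wording issue: the $\nbc$ hypothesis is not used in your duality step itself, only upstream in Proposition~\ref{def:nbc biflags} to exclude $j=0$, so your final sentence slightly overstates where the hypothesis enters.
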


%\noindent 
We call ${\F^+(B)|\G^+(B)}$ the \emph{extended $\nbc$ biflag} of the $\nbc$ basis $B$.

\begin{proof}
The only statement requiring proof is the equivalence between the two different definitions of $i$. Let $i$ be the largest index for which $c_i \notin \cl(S) \cup \cl^\perp(T)$.
We verify two statements:
\begin{enumerate}[(1)]\itemsep 5pt
  \item $\cl(S, c_1, \ldots, c_{i-1}) \cup  \cl^\perp(T) \neq E$. 
  \end{enumerate}
To see this, notice that $c_i \notin \cl(S, c_1, \ldots, c_{i-1})$ because $\{S, c_1, \ldots, c_{i-1}, c_i\} \subseteq B$ is independent. %and $c_i \notin \cl^\perp(T)$ by assumption.
\begin{enumerate}[(1)]\itemsep 5pt
\item[(2)] $\cl(S, c_1, \ldots, c_i) \cup  \cl^\perp(T) = E$. 
\end{enumerate}
For this, let $\cl(S, c_1, \ldots, c_i) \cup  \cl^\perp(T) = U$, and consider any element $e \neq \min B^\perp$. %There are three cases:
\begin{enumerate}\itemsep 5pt %[itemsep=0pt]
  \item[(2-1)] If $e \in B^\perp - \min B^\perp = T$, then $e \in U$.
\item[(2-2)] If $e \in \IP(B) = S$, then $e \in S \subseteq \cl(S) \subseteq U$. 
\item[(2-3)] If $e \in \IA(B)$, then $e=c_j$ for some index $j$. If $j>i$, then by the maximality of $i$, we have  $c_j \in \cl(S) \cup \cl^\perp(T) \subseteq U$. If $j \leq i$, then $c_j \in \{S,c_1, \ldots, c_i) \subseteq U$. In either case, $e \in U$.
\end{enumerate}
We conclude that $U \supseteq E - \min B^\perp$. Since $\abs{U} \neq n$ by Lemma \ref{lem:big unions}, we must have $U=E$. %as desired.
\end{proof}

\begin{example} \label{ex:nbc region}
We continue with Example \ref{ex:nbc biflag}, which described the $\nbc$ biflag corresponding to the $\nbc$ basis $B=015678b$ for Figure \ref{fig:cube}. We now augment it to a maximal biflag. The set 
\[
\IA(B) = \green{016} = \{c_1 > c_2 > c_3\}
\]
 determines the two top row entries to be added, namely, 
 \[
 \cl(S, c_1) = 5678b \ \ \text{and} \ \  \cl(S, c_1, c_2) = 1256789ab.
 \]
  Notice that $i=2$ is the largest index for which $c_i \in E -  \cl(S) \cup \cl^\perp(T) = 12$
%Notice that $5678b \cup 03469a \neq E$ while $1256789ab \cup 03469a = E$, so $i=2$ is 
and also the smallest index such that $\cl(S, c_1, \ldots, c_i) \cup  \cl^\perp(T) = E$.
Therefore the bottom row entries switch from $E$ to $03469a$ between positions $i-1=1$ and  $i=2$:
\[
\begin{array}{|c|ccccccccccc|c|}
\hline
\emptyset & {\bf b} & {\bf 8}b & {\bf 7}8b & {\bf 5}78b & \green{5{\bf 6}78b} & \blue{\subsetneq} &  \green{{\bf 1}256789ab} & E & E & E & E & E\\
E & E & E & E & E & \green{E} & \blue{\supsetneq} & \green{03469a} & 0{\bf 3}469a & {\bf 4}69a & 6{\bf 9}a & {\bf a} & \emptyset \\
\hline
\end{array}.
\]
The resulting table corresponds to the extended $\nbc$ biflag ${\F^+(B)|\G^+(B)}$. 
%Notice that this is the second biflag of $\Sigma_{\M,\M^\perp}$ described in Example \ref{ex:conormalfan}.
\end{example}

\section{Lower bound for $\gamma^k\delta^{n-k-1}$.}\label{sec:canonical}
%\section{The degree of $\gamma^k\delta^{n-k-1}$ in the Chow ring $A_{\M,\M^\perp}$ is at least $h_{r-k}(\RBC(\M))$.}\label{sec:canonical}

%Recall that if $\F=(F_1 \subseteq F_2 \subseteq \cdots \subseteq F_k)$ and $\G=(G_1 \supseteq G_2 \supseteq \cdots \supseteq G_k)$ form a biflag then we write
%\[
%x_{\F|\G} \coloneq  x_{F_1|G_1}\cdots x_{F_k|G_k}. 
%\]
%These are the nonzero square-free monomials in the conormal Chow ring $A_{\M,\M^\perp}.$ We have $\deg(x_{\F|\G}) = 1$ for every maximal biflag $\F|\G$.

%We defined elements $\gamma$ and $\delta$ of the conormal Chow ring $A_{\M,\M^\perp}$ by 
%\[
%\gamma = \gamma_i=\sum_{\substack{F|G\in \RR_{\M,\M^\perp}\\
%i\in F, \, F\neq E}} x_{F|G}, \qquad
%\og_i=\sum_{\substack{F|G\in \RR_{\M,\M^\perp}\\ i\in G, \, G\neq E}} x_{F|G}, \qquad
%\delta = \delta_i=\sum_{\substack{F|G\in \RR_{\M,\M^\perp}\\
%i\in F \cap G}} x_{F|G}.
%\]
%for any $i \in E$. Our next goal is 
We aim to compute the degree of $\gamma^k\delta^{n-k-1}$ by expressing it as a sum of square-free monomials $x_{\F|\G}$.
% and $\gamma=\gamma_i$ and $\delta=\delta_i$ are independent of $i$
%In this section we develop some of the technical groundwork necessary to prove Theorem \ref{thm:degrees}. 
One fundamental feature of the computation of $\gamma^k\delta^{n-k-1}$, which is simultaneously an advantage and a difficulty, is that there are many different ways to carry it out, since we have $n+1$ different definitions of $\gamma$ and $\delta$; namely $\gamma = \gamma_i$ and $\delta=\delta_i$ for every $i \in E$. It is not clear from the outset how one should organize this computation.

To have control over the computation, we require some structure  amidst that freedom. To achieve this, we introduce two key tools in this section:
\begin{enumerate}[$\bullet$]\itemsep 5pt
\item
(Definition \ref{def:canonical}) 
a canonical way of expanding powers of $\delta$, and
\item
(Lemma \ref{lem:aeradicates}) 
a criterion on a monomial $m$ that guarantees that $m \cdot \gamma^k = 0$.

\end{enumerate}
As we will see in Section \ref{sec:theproof}, the criterion in Lemma \ref{lem:aeradicates} shows that most expressions of the form $m \cdot \gamma^k$ vanish in degree $n-1$. The procedure in Definition \ref{def:canonical} will provide the combinatorial structure necessary to describe the terms that remain.

\subsection{The canonical expansion of $\delta^m$.} 

Let $\F|\G$ be a biflag and $x_{\F|\G}$ be the corresponding square-free monomial. Recall from Lemma
\ref{lem:nongaps} that the union of the gaps of $\F|\G$ is nonempty and equals
\[
D_0 \sqcup \cdots \sqcup D_k = E - \bigcup_{j=1}^k (F_j \cap G_j).
\]

\begin{definition}  \label{def:canonical0} (\emph{Canonical expansion of $x_{\F|\G} \, \delta$})
For a monomial $x_{\F|\G}$, let 
\[
e = e(\F|\G) \coloneq \max \big(E - \bigcup_{j=1}^k (F_j \cap G_j) \big),
\]
%which exists since $\F|\G$ is a biflat. 
Define the \emph{canonical expansion} of $x_{\F|\G} \delta$ to be the expression
\[
x_{\F|\G} \,\delta_{e}
=
x_{\F|\G} \sum_{e \in F \cap G} x_{F|G}.
\]
\end{definition}

We recursively obtain the \emph{canonical expansion} of $x_{\F|\G} \delta^m$ by multiplying each monomial in the canonical expansion of  $x_{\F|\G} \delta^{m-1}$ by $\delta$, again using the canonical expansion. 

\begin{lemma}\label{lem:canonexpansion}
The canonical expansion of $x_{\F|\G} \, \delta$ is the sum of the monomials $x_{(\F \cup F) | (\G \cup G)}$ corresponding to the biflags of the form ${(\F \cup F) | (\G \cup G)} \supsetneq {\F|\G}$ such that $e \in F \cap G$. 
If $j$ is the unique index for which $e \in F_{j+1}-F_j$, then $e \in G_j - G_{j+1}$. Furthermore, the nonzero terms %$x_{(\F \cup F) | (\G \cup G)}$ 
in the canonical expansion correspond to the biflats ${F|G}$ with $F_j \subseteq F \subseteq F_{j+1}$ and $G_j \supseteq G \supseteq G_{j+1}$. %and $e \in F \cap G$.
 \end{lemma}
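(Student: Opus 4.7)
The plan is to unwind the definitions and invoke Lemma~\ref{lem:nongaps}. By the definition of $\delta_e$, one has
\[
x_{\F|\G}\, \delta_e = \sum_{F|G:\, e \in F \cap G} x_{\F|\G \cup F|G},
\]
and by the definition of the ideal $\III$, each such term vanishes in $\AAA$ precisely when $\F|\G \cup F|G$ fails to be a biflag. Because $e$ lies in no $F_i \cap G_i$, no biflat $F|G$ occurring in this sum can coincide with any $F_i|G_i$, so each surviving biflag properly extends $\F|\G$. This yields the first assertion.

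For the claim about the index $j$: by Lemma~\ref{lem:nongaps}, the element $e$, being the maximum of $E - \bigcup_i F_i \cap G_i = \bigsqcup_i D_i$, lies in a unique gap $D_{j'}$. Since $D_{j'} = (F_{j'+1} - F_{j'}) \cap (G_{j'} - G_{j'+1})$, this $j'$ agrees with the unique index $j$ for which $e \in F_{j+1} - F_j$, and moreover $e \in G_j - G_{j+1}$.

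For the range constraint on nonzero terms, the crux is pairwise compatibility of $F|G$ with each $F_i|G_i$. Compatibility with $F_j|G_j$ offers two alternatives: $F \subseteq F_j$ or $F \supseteq F_j$. Since $e \in F$ and $e \notin F_j$, only the latter is possible, and the compatibility alternative then forces $G \subseteq G_j$. Dually, compatibility with $F_{j+1}|G_{j+1}$, combined with $e \in G$ and $e \notin G_{j+1}$, yields $G \supseteq G_{j+1}$ and $F \subseteq F_{j+1}$. Compatibility with the remaining biflats is then automatic: for $i \leq j-1$ we have $F_i \subseteq F_j \subseteq F$ and $G_i \supseteq G_j \supseteq G$, while for $i \geq j+2$ we have $F \subseteq F_{j+1} \subseteq F_i$ and $G \supseteq G_{j+1} \supseteq G_i$.

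No step presents a serious obstacle; the main care needed is threading the ``or'' of the compatibility definition through positions $j$ and $j+1$ simultaneously, using $e \in F \cap G$ against $e \notin F_j$ and $e \notin G_{j+1}$ to force a unique alternative at each of the two critical positions.
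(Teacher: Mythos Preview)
Your proof is correct and follows essentially the same approach as the paper's. The paper argues the second claim directly from $e\notin F_{j+1}\cap G_{j+1}$ and $F_j\cup G_j=E$, whereas you route it through Lemma~\ref{lem:nongaps}; your compatibility analysis for the third claim is more explicit than the paper's one-line ``must be added in between indices $j$ and $j+1$,'' but the underlying reasoning is the same.
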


\begin{proof}
The first statement follows directly from definition. For the second one, assume $e \in F_{j+1}-F_j$. Since $e \in F_{j+1}$ but $e \notin F_{j+1} \cap G_{j+1}$, we have $e \notin G_{j+1}$. Since $e \notin F_j$ but $F_j \cup G_j = E$, we have $e \in G_j$. Therefore $e \in G_j-G_{j+1}$ as desired.
Finally, if ${(\F \cup F) | (\G \cup G)}$ is a biflag with $e \in F \cap G$, then $e \notin F_j$ and $e \notin G_{j+1}$ imply that the biflat $F|G$ must be added in between indices $j$ and $j+1$ of ${\F|\G}$. Conversely, any such biflat arises in this expansion.
\end{proof}

\begin{definition}  \label{def:canonical} (\emph{Canonical expansion of $\delta^m$})
%The canonical expansion of $\delta^m$ in the conormal Chow ring $A_{\M,\M^\perp}$ of $\M$ is computed recursively as follows. 
Given the canonical expansion $\delta^j = x_{\F_1|\G_1} + \cdots + x_{\F_t|\G_t}$, we compute the canonical expansion of $\delta^{j+1} = x_{\F_1|\G_1}\, \delta + \cdots + x_{\F_t|\G_t} \, \delta$ by adding the canonical expansions of the individual terms, following Definition \ref{def:canonical0}.
\end{definition}

We may think of the canonical expansion of $\delta^m$ as a recursive procedure to produce a list of biflags of length $m$, %in the conormal fan $\Sigma_{\M,\M^\perp}$, 
where each biflag is built up one biflat at a time according to the rules prescribed in Lemma \ref{lem:canonexpansion}.

\begin{example}\label{ex:pyramid2}
For the graph $G$ of the square pyramid in Figure \ref{fig:pyramid}, the canonical expansion of the highest nonzero power of $\delta$ in $A_{\M, \M^\perp}$, namely $\delta^{n-1} = \delta^6$, is
\begin{eqnarray*}
\delta^6 &=&  x_{6|E}\, x_{56|E}\, x_{4567|E}\, x_{E|23467}\, x_{E|347}\, x_{E|7} \\
&& + x_{7|E}\, x_{57|E}\, x_{4567|E}\, x_{E|23467}\, x_{E|36}\, x_{E|6} \\
&& + x_{7|E}\, x_{67|E}\, x_{4567|E}\, x_{E|235}\, x_{E|35}\, x_{E|5}.
\end{eqnarray*}
This expression is deceivingly short. Carrying out this seemingly simple computation by hand is very tedious; if one were to do it by brute force, one would find that 
the number of terms of the canonical expansions of $\delta^0, \ldots, \delta^6$ are the following:
\[
\begin{array}{|l||l|l|l|l|l|l|l|} \hline
& \delta^0 & \delta^1 & \delta^2 & \delta^3 & \delta^4 & \delta^5 & \delta^6\\ 
\hline \hline
\text{\# of monomials counted with multiplicity} & 1 & 29 & 352 & 658 & 383 & 69 & 3\\ \hline
\text{\# of distinct monomials} & 1 & 29 & 333 & 621 & 370 & 68 & 3\\ \hline
\end{array} \ .
\]
This example shows typical behavior: for small $k$ the number of biflags in the 
expansion of $\delta^k$ increases with $k$, but as $k$ approaches
$n-1$, increasingly many products $x_{\F|\G} \, \delta$ are zero, and the canonical expansions become shorter.
\end{example}

Each monomial contribution $x_{\F|\G}$ to the canonical expansion of $\delta^m$ is built up through a sequence of monomials 
\[
1=x_{\F^0|\G^0},  \ \  x_{\F^1|\G^1}, \ \  \ldots, \ \   x_{\F^m|\G^m} = x_{\F|\G},
\]
 where $x_{\F^{k+1}|\G^{k+1}}$ appears in the canonical expansion of $x_{\F^k|\G^k} \delta = x_{\F^k|\G^k} \delta_{e(\F^k|\G^k)}$. We relabel the $e(\F^k|\G^k)$s, writing
$e_i = e({\F^k|\G^k})$ if the $i$-th biflat of the final biflag $\F|\G$ is obtained in the canonical expansion of $x_{\F^k|\G^k} \delta$. Thus both the flat and the coflat of that $i$-th biflat must contain $e_i$.
We call the resulting sequence $\ee = (e_1, \ldots, e_m)$ the \emph{arrival sequence} of this contribution.
We summarize the definition of the canonical expansion of $\delta^m$ and the arrival sequence of each term in the following proposition.

\begin{proposition}\label{prop:expansion}
%Suppose $\M$ is a matroid on an ordered set of size $n+1$.
The canonical expansion of $\delta^m$ %of Definition~\ref{def:canonical}
 consists of the monomials $x_{\F|\G}$ indexed by the collection $\T^m_{\M,\M^\perp}$ of all pairs $(\F|\G , \ee)$ for which
\begin{enumerate}[(1)]\itemsep 5pt
\item 
the biflag $\F|\G = \{F_1|G_1, \ldots, F_m|G_m\}$ is a biflag of biflats of $\M$, and
\item 
the arrival sequence $\ee=(e_1, \ldots, e_m)$ is a sequence of distinct elements of $E$ such that %for all $1\leq i\leq m$ we have 
%\begin{equation} \label{item:prop expansion 4}
\[
e_i\in F_i\cap G_i \ \ \text{and} \ \  e_i=\max\big(E-\bigcup_{j\colon e_j > e_i} (F_j\cap G_j)\big) \ \ \text{for all $1 \le i \le m$.}
\]
%\end{equation}
\end{enumerate}
 In symbols, the following identity holds in the conormal Chow ring of $\M$:%$A_{\M,\M^\perp}$:
%\begin{equation}\label{eq:expansion}
\[
\delta^{m}=\sum_{(\F|\G , \ee)
\in{\T^m_{\M,\M^\perp}}} x_{\F|\G}. %x_{F_1|G_1}x_{F_2|G_2}\cdots x_{F_m|G_m}
\]
%\end{equation}
\end{proposition}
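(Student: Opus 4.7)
The plan is to proceed by induction on $m$, essentially formalizing the recursive construction of Definition \ref{def:canonical}. For the base case $m=0$, the canonical expansion of $\delta^0 = 1$ has a single term corresponding to the empty biflag with empty arrival sequence, and condition (2) is vacuous.

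For the inductive step, I would assume the claim for $m-1$ and write
\[
\delta^m \;=\; \sum_{(\F'|\G',\ee') \in \T^{m-1}_{\M,\M^\perp}} x_{\F'|\G'} \cdot \delta_{e(\F'|\G')},
\]
then apply Lemma \ref{lem:canonexpansion} to each summand. That lemma tells us that $x_{\F'|\G'} \cdot \delta_{e(\F'|\G')}$ is the sum of monomials $x_{(\F' \cup F)|(\G' \cup G)}$ over biflats $F|G$ with $e(\F'|\G') \in F \cap G$, where the new biflat is inserted at a uniquely determined position in $\F'|\G'$. Each such choice produces a pair $(\F|\G, \ee)$ of length $m$, where $\ee$ is obtained from $\ee'$ by inserting $e(\F'|\G')$ at the appropriate position. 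The key observation is that the sequence of elements $e(\F^{(0)}|\G^{(0)}), e(\F^{(1)}|\G^{(1)}),\ldots$ produced along the recursion is strictly decreasing, since each $e(\F^{(k)}|\G^{(k)})$ is the maximum of a nested shrinking set. Thus, for a pair $(\F|\G,\ee)$ obtained this way, the biflats present in $\F^{(k)}|\G^{(k)}$ are exactly those $F_j|G_j$ with $e_j$ among the $k$ largest entries of $\ee$. Rewritten, this gives condition (2): $e_i$ equals the max of $E$ minus the union of $F_j\cap G_j$ taken over $j$ with $e_j>e_i$. Checking condition (2) for the newly inserted element reduces to the definition of $e(\F'|\G')$, while for the old elements it follows from the inductive hypothesis together with the fact that the newly added element is strictly smaller than all previous arrival elements, so the set $\{j : e_j > e_i\}$ is unchanged.

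For the converse direction, I would show that every pair $(\F|\G,\ee) \in \T^m_{\M,\M^\perp}$ arises from the canonical expansion, by sorting $\ee$ in decreasing order to recover the unique sequence of canonical expansion steps. Condition (2) ensures that at each step $k$, the element $e$ being added is exactly $\max(E - \bigcup F_j^{(k-1)}\cap G_j^{(k-1)})$, matching the rule of Definition \ref{def:canonical0}; and the rule of Lemma \ref{lem:canonexpansion} forces the insertion position to match the position of $e$ in $\ee$. Hence the correspondence between canonical expansion terms and elements of $\T^m_{\M,\M^\perp}$ is a bijection, proving the stated identity.

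The main obstacle is bookkeeping: one must be careful to distinguish \emph{monomials} from \emph{terms} of the canonical expansion, since different sequences of expansion steps can produce the same biflag $\F|\G$ (as illustrated in Example \ref{ex:pyramid2}). This is precisely what the arrival sequence $\ee$ records, and it is essential to index the sum over pairs $(\F|\G,\ee)$ rather than over biflags alone. The decreasing nature of arrival sequences is what makes the bijection between $\T^m_{\M,\M^\perp}$ and canonical expansion histories clean, and I expect most of the work to go into verifying that every pair in $\T^m_{\M,\M^\perp}$ actually corresponds to a valid expansion history, i.e., that at each step the proposed partial biflag is genuinely a biflag.
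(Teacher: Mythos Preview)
Your proposal is correct and follows exactly the approach the paper takes. In fact, the paper does not write out a separate proof for this proposition at all: it is introduced as a summary of the recursive construction given in Definitions~\ref{def:canonical0} and~\ref{def:canonical} and the paragraph preceding the statement, which already describes the arrival sequence in the same terms you do. Your inductive argument simply makes that summary explicit. One remark on the obstacle you flag at the end: the worry that the intermediate collections $\F^{(k)}|\G^{(k)}$ might fail to be biflags is immediately resolved by observing that any subcollection of a biflag is again a biflag (compatibility is pairwise, and removing biflats only shrinks the union $\bigcup_j F_j\cap G_j$), so no additional work is needed there.
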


%\begin{proof}
%This follows readily from the definitions.
%\end{proof}

We record each pair $(\F|\G , \ee) \in \T^m_{\M,\M^\perp}$ as a table, where we expand the table of biflag $\F|\G$ by placing element $e_i$ directly below $F_i$ and $G_i$.
Each contribution to the canonical expansion $\delta^m$ comes from such a table.
\[
%(\F|\G,\ee): 
\begin{array}{|cc|ccccccccccc|cc|}
\hline
\emptyset & \subsetneq & F_1 & \subseteq & \cdots & \subseteq & F_d &
\subseteq & F_{d+1} & \subseteq & \cdots & \subseteq  & F_m & \subseteq & E\\
E & \supseteq & G_1 & \supseteq & \cdots & \supseteq & G_d & 
\supseteq & G_{d+1} & \supseteq & \cdots & \supseteq  & G_m & \supsetneq & \emptyset\\
\hline
&&e_1 && \cdots && e_d && e_{d+1}&& \cdots && e_m&&\\
\hline
\end{array} \, 
\]
%For the remainder of this section, let us fix the choice of matroid $\M$ and integer $k$ and consider properties of the chains indexing monomials in the expansion above.  
%We adopt the convention that $F_0 = G_{m+1} = \emptyset$ and $G_0=F_{m+1}=E$.
The canonical expansion of $\delta^m$ may contain repeated terms $x_{\F|\G}$ which lead to different tables $(\F|\G;\ee)$ with the same  biflat $\F|\G$ but different arrival sequences $\ee$.

%We will prove in Lemma \ref{lem:bis} that this is not the case for the highest non-trivial power $\delta^{n-1}$: in this case, $\b$ is in fact determined by $\F$ and $\G$. \red{Add reference to the more general statement.}

%Since we assume $\M$ is not free, $G_{m+1}\neq E$.

\begin{example}%[Example~\ref{ex:pyramid2}, continued]
Let us revisit the canonical expansion of $\delta^6$, the highest power of $\delta$, in Example~\ref{ex:pyramid2}. The first monomial arises from the following table.
\[
%\hspace{-0.6cm}%(\F|\G,\b): \, 
 \begin{array}{|cc|ccccccccccc|cc|}
\hline
\emptyset & \subset&  6 & \subsetneq & 56 & \subsetneq &  4567 & \blue{\subsetneq} & E & = & E & = & E & = &  E \\
{}E & = & E & = & E & = & E & \blue{\supsetneq} & 23467 & 
\supsetneq & 347 & \supsetneq & 7 & \supset&  \emptyset \\ \hline
&&e_1=6 & & e_2= 5 && e_3= 4 && e_4= 2 && e_5= 3 && e_6= 7 &&\\
\hline
\end{array}
\]
The terms $x_{F_i|G_i}$ arrive to the monomial $x_{\F|\G}$ in the descending order of the $e_i$s, namely, 
\[
x_{E|7}x_{6|E}x_{56|E}x_{4567|E}x_{E|347}x_{E|23467}.
\]
 This means that 
$x_{E|7}$ is in the canonical expansion of $\delta = \delta_7$, 
$x_{E|7}x_{6|E}$ is in the canonical expansion of $x_{E|7}\delta = x_{E|7}\delta_6$, 
$x_{E|7}x_{6|E}x_{56|E}$ is in the canonical expansion of $x_{E|7}x_{6|E}\delta=x_{E|7}x_{6|E}\delta_5$, and so on.
The corresponding biflag is $\F(B)|\G(B)$ for the $\nbc$ basis $B=\green{0}456$ with $\IA(B) = \green{0}$, as introduced in Definition \ref{def:nbc biflags}. 

The two other monomials in our expansion of $\delta^6$ are $x_{7|E}\, x_{57|E}\, x_{4567|E}\, x_{E|23467}\, x_{E|36}\, x_{E|6}$ and $x_{7|E}\, x_{67|E}\, x_{4567|E}\, x_{E|235}\, x_{E|25}\, x_{E|5}$, which correspond to the biflags of $\green{0}457$ and $\green{0}467$, respectively. These are the other two $\nbc$ bases whose only internally active element is $0$. 
Note that we must have $0 \in \IA(B)$ for any $\nbc$ basis $B$.

%Notice that the resulting flags $\F|\G = (\Phi(0456), \Phi(0237))$ correspond to the $\beta$-$\nbc$ bases $0456$ and $\iota(0456) = 0237$ of $\M$ and $\M^\perp$. 
This example illustrates a general phenomenon: The case $k=0$ of  Theorem \ref{thm:main} says that the terms of the canonical expansion of $\delta^{n-1}$ in the conormal Chow ring correspond to the $\nbc$ biflags of the $\nbc$ bases $B$ of $\M$ with $|\IA(B)|=1$; these are also known as the \emph{$\beta$-$\nbc$ bases} of $\M$. They are enumerated by Crapo's beta invariant $\beta_{\M} = h_r(\BC(\M)) = t_{1,0}(\M)$: see \cite{Zie92}.
\end{example}

\subsection{$h_{r-k}$-many $\nbc$ monomials in $\delta^{n-k-1}$.}

%We now identify $t_{k+1,0}(\M)$ terms in the canonical expansion of $\delta^{n-k-1}$ that will play a central role. 

To each $\nbc$ basis $B$ of $\M$, we associated a sequence $\ee(B)$ and a biflag ${\F(B)| \G(B)}$ in Definition \ref{def:nbc biflags}. We will now show that the table $(\F(B)| \G(B), \ee(B))$ satisfies the conditions of Proposition \ref{prop:expansion}, and hence the $\nbc$ monomial $x_{\F(B)|\G(B)}$ appears in the canonical expansion of $\delta^{n-k-1}$ with arrival sequence $\ee(B)$, where $k+1 = |\IA(B)|$.

\begin{example} \label{ex:partialnbcmonomial}
For the cube graph of Figure \ref{fig:cube} and $k=2$, the $\nbc$ basis $\green{01}5\green{6}78b$ with $\IA(B) = \green{016}$ gives rise to the $\nbc$ biflag of Example \ref{ex:nbc biflag}, and the table
%\begin{equation} \label{table:partialnbc}
\[
\begin{array}{|c|ccccccccc|c|}
\hline
\emptyset & b & 8b & 78b & 578b & \blue{\subsetneq} & E & E & E & E & E\\
E & E & E & E & E & \blue{\supsetneq} &  03469a & 469a & 69a & 9 & \emptyset \\
\hline
& b & 8 & 7 & 5  & & 3 & 4 & 9 & a & \\
\hline
\end{array}
\]
%\end{equation}
gives rise to the following $\nbc$ monomial in the canonical expansion of $\delta^8$:
\[
x_{b|E} \, x_{8b|E} \, x_{78b|E} \, x_{578b|E} \, x_{E|03469a} \, x_{E|469a} \, x_{E|69a} \, x_{E|9}.
\]
%in $\delta^8$.
\end{example}

\begin{proposition}\label{prop:delta expansion III}
If $B$ is a $\nbc$ basis of $\M$ with $|I(B)| = k+1$, then the $\nbc$ monomial $x_{\F(B)| \G(B)}$ arises in the canonical expansion of $\delta^{n-k-1}$. 
\end{proposition}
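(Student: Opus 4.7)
The plan is to verify that the pair $(\F(B)|\G(B),\ee(B))$ satisfies the two conditions of Proposition \ref{prop:expansion}, so that $x_{\F(B)|\G(B)}$ appears in the canonical expansion of $\delta^{n-k-1}$ with arrival sequence $\ee(B)$. Condition (1), that $\F(B)|\G(B)$ is a biflag, is exactly Proposition \ref{def:nbc biflags}. The distinctness of the entries of $\ee(B)$ follows from the disjointness of $B-\IA(B)$ and $B^\perp-\min B^\perp$, and the inclusion $e_i\in F_i\cap G_i$ is immediate from the construction. The substantive content is the maximality condition
\[
e_i=\max\Big(E-\bigcup_{j\,:\,e_j>e_i}F_j\cap G_j\Big) \qquad \text{for every }i.
\]

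To prove this, I would parametrize intermediate stages by a state $(a,b)$ with $0\le a\le r-k$ and $r-k+1\le b\le n-k$, meaning that the biflats of $\F(B)|\G(B)$ with indices in $\{1,\ldots,a\}\cup\{b,\ldots,n-k-1\}$ have been added, and let
\[
W_{a,b}=\cl\{e_1,\ldots,e_a\}\cup\cl^\perp\{e_b,\ldots,e_{n-k-1\}}
\]
denote the corresponding covered set. The proof proceeds by induction on $s=a+(n-k-b)$, with inductive claim that state $(a,b)$ is reached after $s$ arrivals, that $\max(E-W_{a,b})=\max(e_{a+1},e_{b-1})$ (discarding undefined entries), and moreover $e_i>\max(e_{a+1},e_{b-1})$ for every already-processed $i$. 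The induction step splits into two verifications: (i) both $e_{a+1}$ and $e_{b-1}$ lie outside $W_{a,b}$; and (ii) every $y\in E$ with $y>\max(e_{a+1},e_{b-1})$ lies in $W_{a,b}$.

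The easy halves of (i) follow from independence of $B$ and $B^\perp$. The harder halves rule out $e_{a+1}\in\cl^\perp\{e_b,\ldots,e_{n-k-1}\}$ and $e_{b-1}\in\cl\{e_1,\ldots,e_a\}$ by examining the fundamental cocircuit $C^\perp(B,e_{a+1})$ and the fundamental circuit $C(B,e_{b-1})$: the nbc hypothesis combined with $e_{a+1}\notin\IA(B)$ forces the minima of each to lie strictly below $e_{a+1}$ or $e_{b-1}$, while the inductive hypothesis that processed elements exceed remaining arrivals forces the same minima strictly above, yielding a contradiction. For (ii), elements $y\in(B-\IA(B))\cup(B^\perp-\min B^\perp)$ are dispatched by index comparison within $\ee(B)$. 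The case $y=\min B^\perp$ is excluded by first showing $\min(B-\IA(B))>\min B^\perp$, a short fundamental-cocircuit argument relying on the nbc property, which guarantees $\max(e_{a+1},e_{b-1})>\min B^\perp$.

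The main obstacle is the remaining case $y\in\IA(B)$: here the bound $y>\min B^\perp$ combined with the definition of internal activity yields $C^\perp(B,y)\setminus\{y\}\subseteq B^\perp-\min B^\perp$, and the bound $y>e_{b-1}$ further pushes every element of this set into $\{e_b,\ldots,e_{n-k-1}\}$, placing $y\in\cl^\perp\{e_b,\ldots,e_{n-k-1}\}$. This is the delicate step where internal activity, the nbc hypothesis, and the total order on $E$ must be combined carefully, and it is what makes the argument more subtle than the analogous statement for a purely combinatorial closure operator. Once the maximality condition is established, at each state the intended biflat from $\F(B)|\G(B)$ contains the current maximum uncovered element and is compatible with the existing partial biflag by Lemma \ref{lem:canonexpansion}, so one branch of the canonical expansion of $\delta^{n-k-1}$ produces the monomial $x_{\F(B)|\G(B)}$, completing the proof.
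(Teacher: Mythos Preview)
Your proposal is correct and follows essentially the same approach as the paper: both verify the conditions of Proposition~\ref{prop:expansion} by checking, for each arrival element $e$, that $e$ lies outside the covered set while every $f>e$ lies inside it, and both dispatch this via the same case analysis on whether the element is in $\IP(B)$, $B^\perp-\min B^\perp$, $\IA(B)$, or equals $\min B^\perp$. The paper's proof is slightly more streamlined because it observes at the outset that the covered set at the moment $e_i$ arrives is exactly $\cl(\IP(B)_{>e_i})\cup\cl^\perp(B^\perp_{>e_i})$, a closed-form description depending only on $e_i$; this allows the verification to be done for each $e_i$ independently and obviates your explicit induction on states $(a,b)$, though the underlying arguments are identical. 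One minor remark: your claim that $\min(B-\IA(B))>\min B^\perp$ follows from the definition of internal activity alone, not the $\nbc$ hypothesis as you say---if $e\in B$ satisfies $e<\min B^\perp$, then $e=\min C^\perp(B,e)$ automatically, so $e\in\IA(B)$.
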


\begin{proof}
We verify that $({\F(B)| \G(B)},\ee(B))$ satisfies the conditions of Proposition \ref{prop:expansion}. 
We know that ${\F(B)| \G(B)}$ is a biflag by Proposition \ref{def:nbc biflags}, so it remains to show that %for any $e \in \IP(B)$ or $e \in B^\perp-\min B^\perp$, we have
\[
e=\max \big(E- \cl(\IP(B)_{>e}) - \cl^\perp(B^\perp_{>e})\big) \ \ \text{for any $e \in \IP(B)$ or $e \in B^\perp-\min B^\perp$.}
\]
First, we show that $e \in E- \cl (\IP(B)_{>e}) - \cl^\perp(B^\perp_{>e})$, considering two cases:
\begin{enumerate}[(1)] \itemsep 5pt%[itemsep=0pt]
\item $e \in B^\perp-\min B^\perp$: In this case, $e \notin \cl^\perp(B^\perp_{>e})$ since $B^\perp$ is independent in $M^\perp$. Also, if we had $e \in \cl (\IP(B)_{>e})$, then the fundamental circuit $C(B,e)$ would be contained in $\IP(B)_{>e} \cup e$, and hence its smallest element would be $e$. Thus $e$ would be externally active in $B$, contradicting the assumption that $B$ is a $\nbc$ basis. 
\item $e \in \IP(B)$: In this case, $e \notin  \cl (\IP(B)_{>e})$ since $\IP(B) \subseteq B$ is independent in $\M$. Also, if we had $e \in \cl^\perp(B^\perp_{>e})$, then the fundamental cocircuit $C^\perp(B,e)$ would be contained in $B^\perp_{>e} \cup e$, and hence its smallest element would be $e$, contradicting that $a$ is internally passive in $B$.
\end{enumerate} 
Now, for the sake of contradiction, let us assume that $\max \big(E- \cl(\IP(B)_{>e}) - \cl^\perp(B^\perp_{>e})\big) = f > e$. We consider three cases:
\begin{enumerate}[(1)]\itemsep 5pt %[itemsep=0pt]
\item
$f \in B^\perp$: Then $f>e$ would imply  $f \in B^\perp_{>e} \subseteq \cl^\perp(B^\perp_{>e})$. 
\item
$f \in \IP(B)$: Then $f>e$ would imply  $f \in \IP(B)_{>e} \subseteq \cl(\IP(B)_{>e})$.
\item
$f \in \IA(B)$: This means that $f = \min C^\perp(B,f)$, so this fundamental cocircuit is contained in $B^\perp_{>f} \cup f$. Thus $f \in \cl^\perp (B^\perp_{>f})$ while
  $f \notin \cl^\perp(B^\perp_{>e})$, which is impossible since $B^\perp_{>f} \subseteq B^\perp_{>e}$.
\end{enumerate}
This completes the proof.
%; but since $B^\perp_{>f} \subseteq B^\perp_{>e}$, this implies $f \in \cl^\perp(B^\perp_{>e})$, a contradiction. The desired result follows.
\end{proof}

%\subsection{The $\nbc$ monomials are resistant}
%
%
%We are now ready to prove the main theorem. The last step is to show that every monomial $x_{\F|\G}$ described in Proposition \ref{prop:delta expansion II} is indeed resistant to multiplication by $\gamma^k$, and furthermore, that $x_{\F|\G}\gamma^k$ equals a single monomial of length $n-1$. Before we prove this, let us illustrate it with an example.
%

\subsection{Multiplying by a high power of $\gamma$ eradicates non-initial monomials.}

\begin{lemma}\label{lem:multbya}
Consider a monomial $x_{\F|\G}$ and let $l$ be the largest index for which $F_l \neq E$. %(Note that $l$ may equal $0$.) 
If $c \notin F_l$, then $x_{\F|\G} \, \gamma_c$ is the sum of the monomials $x_{(\F \cup F) | (\G \cup G)}$ corresponding to 
%the biflags $C(\F \cup F,\G \cup G) \supsetneq C\F|\G$ such that $c \in F \neq E$. 
%\[
%x_{\F|\G} \,\gamma
%=
%\sum_{\substack{F|G\in \RR_{\M,\M^\perp}: \\
%F_l \cup c  \subseteq F \neq E, \\  G_l \supseteq G \supseteq G_{l+1}}} x_{\F \cup F, \G \cup G}.
%\]
the variables $x_{F|G}$ with $F_l \cup c \subseteq F \subsetneq E$ and $G_l \supseteq G \supseteq G_{l+1}$.
 \end{lemma}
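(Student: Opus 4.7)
The plan is to expand the linear form $\gamma_c$ by its definition and analyze which terms give nonzero contributions in the conormal Chow ring. By definition,
\[
\gamma_c=\sum_{\substack{F|G\ \text{biflat}\\c\in F,\ F\neq E}} x_{F|G},
\]
so distributing yields $x_{\F|\G}\cdot\gamma_c=\sum_{F|G} x_{\F|\G}\cdot x_{F|G}$, where each product equals the monomial $x_{(\F\cup F)|(\G\cup G)}$ --- interpreted as zero in $\AAA$ whenever $(\F\cup F)|(\G\cup G)$ fails to be a biflag. Since failure of pairwise compatibility already places the corresponding squarefree monomial in $\III$, the task reduces to identifying which biflats $F|G$ are compatible with every biflat appearing in $\F|\G$.

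The key observation is that the element $c$ pinpoints the unique position in the $\F$-chain at which $F$ can be inserted. Because $c\in F$ but $c\notin F_i$ for every $i\leq l$ (as $F_i\subseteq F_l$), the inclusion $F\subseteq F_i$ is impossible, so compatibility with $F_i|G_i$ forces $F\supseteq F_i$ and dually $G\subseteq G_i$. Taking $i=l$ gives $F\supseteq F_l\cup c$ and $G\subseteq G_l$. For $i>l$ we have $F_i=E$, and since $F\subsetneq E$ the reverse inclusion $F\supseteq F_i$ fails; compatibility thus forces $G\supseteq G_i$, and taking $i=l+1$ yields $G\supseteq G_{l+1}$.

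Conversely, I will verify that the conditions $F_l\cup c\subseteq F\subsetneq E$ and $G_{l+1}\subseteq G\subseteq G_l$ are sufficient for pairwise compatibility: for $i\leq l$, $F\supseteq F_l\supseteq F_i$ and $G\subseteq G_l\subseteq G_i$; for $i>l$, $F\subsetneq E=F_i$ and $G\supseteq G_{l+1}\supseteq G_i$. Note also that the inserted $F|G$ cannot coincide with any $F_i|G_i$ already in $\F|\G$, since $c\in F-F_l$ excludes $i\leq l$ and $F\subsetneq E$ excludes $i>l$, so the resulting squarefree monomial is genuinely new. Hence the surviving contributions to $x_{\F|\G}\cdot\gamma_c$ are exactly those indexed by the biflats described in the lemma, giving the claimed sum.

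There is no serious obstacle here; the proof is essentially a careful bookkeeping of pairwise compatibility keyed on the distinguishing element $c$. One mild subtlety is that pairwise compatibility alone does not guarantee that $(\F\cup F)|(\G\cup G)$ satisfies the nonempty-gap requirement of a biflag, so some terms in the formal sum may collapse to zero via $\III$; this causes no trouble since the identity is stated in $\AAA$. The standard conventions $F_0=G_{k+1}=\emptyset$ and $F_{k+1}=G_0=E$ smoothly absorb the edge cases $l=k$ (where the lower bound on $G$ becomes vacuous) and $l=0$ (where the condition on $F$ reduces to $c\in F\subsetneq E$).
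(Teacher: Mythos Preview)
Your proof is correct and follows essentially the same approach as the paper's: expand $\gamma_c$ by definition and use compatibility with each $F_i|G_i$ to pin down where the new biflat must be inserted. The paper's argument is much terser (it simply notes that $c\in F\neq E$ forces $F_l\subsetneq F\subsetneq E=F_{l+1}$ and asserts the converse), while you have spelled out both directions of the compatibility check, the non-repetition of the new biflat, and the harmless possibility that some terms vanish via the gap condition in $\III$.
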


\begin{proof}
We have 
$
x_{\F|\G} \,\gamma_{c}
=
x_{\F|\G} \sum_{
c \in F \neq E} x_{F|G}$.
Now, if ${(\F \cup F) | (\G \cup G)}$ is a biflag with $c \in F \neq E$, then we must have $F_l \subsetneq F \subsetneq E = F_{l+1}$, so the biflat $F|G$ must be added in between indices $l$ and $l+1$ of ${\F|\G}$. Conversely, any such biflat arises in this expansion.
\end{proof}

\begin{definition}
Let us call a monomial $x_{\F|\G}$ \emph{initial} if 
%there exists an index $j$ such that $\rank(F_i) = i$ for $1 \leq i \leq j$ and $F_i = E$ for $i > j$.
the distinct flats in $\F$ have ranks $1, 2, \ldots, i,$ and $r+1$ for some $i$. 
%for some $j$ the flat $F_i$ has rank $i$, for $1\leq i\leq j$, and rank $r+1$ for $i>j$.
\end{definition}

The following technical lemma will play an important role. 
It shows that the multiplication by a high power of $\gamma$ eradicates non-initial monomials.

\begin{lemma} \label{lem:aeradicates} %(Multiplication by a high power of $\gamma$ eradicates non-initial monomials.)
%Suppose flags $\F=(F_1 \subseteq F_2 \subseteq \cdots \subseteq F_m)$ and $\G=(G_1 \supseteq G_2 \supseteq \cdots \supseteq G_m)$ index $\sigma_{\F|\G}$ in the conormal fan of $\M$. 
Let $\F|\G$ be a biflag of $\M$, and let $s$ be the number of distinct proper flats in $\F$.
\begin{enumerate}[(1)]\itemsep 5pt
\item\label{lem:aeradicates:1}
If $s+k > r$, then 
$x_{\F|\G} \, \gamma^k=0$.
\item\label{lem:aeradicates:2}
If $s+k=r$ and $\F$ is not initial, then  $x_{\F|\G} \, \gamma^k = 0$.
\end{enumerate}
\end{lemma}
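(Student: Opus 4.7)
The plan is to prove both statements by iterating Lemma \ref{lem:multbya}. I will define the \emph{top rank} of a biflag by $\rho(\F) \coloneq \rank(F_l)$, where $F_l$ is the largest proper flat appearing in $\F$, with the convention that $\rho(\F) = 0$ if no $F_i$ is a proper flat. The key observation driving the argument is that, since $\gamma = \gamma_c$ in the conormal Chow ring for every $c \in E$, we may always choose to represent $\gamma$ by $\gamma_c$ with $c \notin F_l$ when multiplying $x_{\F|\G}$ by $\gamma$.

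With this choice, Lemma \ref{lem:multbya} expresses $x_{\F|\G}\gamma$ as a sum of monomials $x_{\F'|\G'}$, each obtained by inserting a new flat $F$ with $F_l \cup c \subseteq F \subsetneq E$. Since $c \notin F_l$, this $F$ strictly contains $F_l$, so $\rank(F) > \rho(\F)$, and hence $\rho(\F') > \rho(\F)$; since $F \neq E$ is required, we also have $\rho(\F') \leq r$. Iterating this step $k$ times, every nonzero monomial in the expansion of $x_{\F|\G}\gamma^k$ satisfies $\rho \geq \rho(\F) + k$, and this quantity cannot exceed $r$.

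To conclude part (1), I observe that the $s$ distinct proper flat ranks appearing in $\F$ are distinct elements of $\{1, \ldots, \rho(\F)\}$, so $s \leq \rho(\F)$. Under the hypothesis $s + k > r$, this forces $\rho(\F) + k \geq s + k > r$, so $x_{\F|\G}\gamma^k$ must vanish. For part (2), if $\F$ is not initial then the set of distinct proper flat ranks is not exactly $\{1, 2, \ldots, s\}$, which forces the existence of a proper flat of rank strictly greater than $s$, giving $\rho(\F) \geq s + 1$. Combined with $s + k = r$, this yields $\rho(\F) + k \geq (s+1) + k = r + 1 > r$, again forcing vanishing.

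The main step requiring care is justifying the \emph{strict} increase of $\rho$ at each multiplication by $\gamma$; this is not conceptually difficult, but it hinges entirely on the freedom to choose $c \notin F_l$ at every step, which the identification $\gamma = \gamma_c$ in the conormal Chow ring makes available. The rest is bookkeeping around the ranks of the inserted flats.
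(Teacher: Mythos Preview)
Your proof is correct and follows essentially the same approach as the paper: both arguments repeatedly apply Lemma~\ref{lem:multbya} with $c \notin F_l$, exploiting that the newly inserted flat strictly contains $F_l$ and hence has strictly larger rank. The paper organizes this as a descending induction on $s$, checking at each step that the enlarged flag remains non-initial; you instead package the same mechanism via the invariant $\rho(\F)=\rank(F_l)$ and the inequalities $s\le\rho(\F)$ (always) and $s<\rho(\F)$ (when $\F$ is not initial), which makes parts (1) and (2) fall out uniformly from the single claim that $\rho$ increases strictly with each factor of $\gamma$.
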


\begin{proof}
The proof of the first part is nearly identical and simpler than the proof of the second part.
%1. 
%We omit the proof, which is almost identical and simpler than our proof of part 2.
For the second part, as in Lemma \ref{lem:multbya}, let $l$ be the largest index for which $F_l \neq E$, and let $c \notin F_l$.
We proceed by descending induction on $s$. %Since $\rank(F_i) > i$ implies $r \geq \rank(F_k) > k$, 

The largest possible value of $s$ for a non-initial flag is $r-1$.
Suppose first that $s=r-1$ and $k=1$. If $\F$ is not initial, we must have $\rank F_{l} = r$. Lemma \ref{lem:multbya} then implies that 
\[
x_{\F|\G} \,\gamma = x_{\F|\G} \,\gamma_{c} = \sum_{\substack{
F_{l} \cup c \subseteq F \neq E \\ G_l \supseteq G \supseteq G_{l+1}}} x_{(\F \cup F) | (\G \cup G)} = 0,
\]
since the only flat containing $F_{l}$, which has corank 1, and $c$, which is not in $F_{l}$, is $E$. 

Now suppose that the result is true for some value of $s \leq r-1$, and consider any biflag $\F|\G$, where $\F$ has  $s-1$ distinct proper flats. We have
\[
x_{\F|\G}\, \gamma^{r-(s-1)} = 
(x_{\F|\G}\, \gamma_c)\gamma^{r-s} = 
\sum_{\substack{
F_{l} \cup c \subseteq F \neq E \\ G_l \supseteq G \supseteq G_{l+1}}} x_{(\F \cup F) | (\G \cup G)} \gamma^{r-s} = 0,
\]
where each summand is $0$ by the inductive hypothesis because each flag $\F \cup F$ that arises is not initial and has $s$ distinct proper flats.
\end{proof}

\subsection{$h_{r-k}$-many  extended $\nbc$ monomials in $\gamma^k\delta^{n-k-1}$.} \label{sec:nbcmonomials}

%Let us now show that the $t_{k+1,0}(\M)$ partial $\nbc$ monomials of Proposition \ref{prop:delta expansion III} resist multiplication by $\gamma^k$, giving rise to $t_{k+1,0}(\M)$ (full) $\nbc$ monomials in an expansion of $\gamma^k \delta^{n-k-1}$.
%

Lemma \ref{lem:aeradicates}
%The previous subsection
 shows that many monomials in $\delta^{n-k-1}$ are eliminated when one multiplies them by $\gamma^k$. Let us now show that each one of the $h_{r-k}(\BC(\M))$ $\nbc$ monomials $x_{\F(B)|\G(B)}$ of Proposition \ref{prop:delta expansion III} resists  multiplication by $\gamma^k$, and gives rise to its corresponding extended  $\nbc$ monomial $x_{\F^+(B)|\G^+(B)}$ in  $\gamma^k \delta^{n-k-1}$, as introduced in Definition \ref{def:nbc regions}. We will later see that these are the only monomials that resist  multiplication by $\gamma^k$.
 
 \begin{proposition} \label{prop:nbc is resistant}
For every $\nbc$ basis $B$ with $\abs{\IA(B)} = k+1$, we have %the $\nbc$ monomial $x_{\F(B)|\G(B)}$ and the extended $\nbc$ monomial $x_{\F^+(B)|\G^+(B)}$ satisfy 
\[
x_{\F(B)|\G(B)} \, \gamma^k = x_{\F^+(B)|\G^+(B)}.
\]
%in the Chow ring $A_{\M,\M^\perp}$. 
\end{proposition}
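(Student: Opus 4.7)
The plan is to apply Lemma \ref{lem:multbya} iteratively $k$ times, choosing $\gamma = \gamma_{c_j}$ at the $j$-th multiplication where $c_j$ is the $j$-th largest element of $\IA(B)$. Since $\{S,c_1,\ldots,c_j\}\subseteq B$ is independent, $c_j \notin \cl(S,c_1,\ldots,c_{j-1})$, so the lemma applies with the current largest non-$E$ top flat serving as $F_l$.

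The first key step is to constrain the top flag. Lemma \ref{lem:multbya} expresses the product at step $j$ as a sum of monomials $x_{\F_j|\G_j}$ indexed by biflats $F|G$ with $F \supseteq \cl(S,c_1,\ldots,c_j)$ and $F \neq E$. For each such monomial, the remaining $\gamma^{k-j}$ factors annihilate it by Lemma \ref{lem:aeradicates}(2) unless the top flag $\F_j$ is initial; since $\F(B)$ already occupies ranks $1,\ldots,r-k$, this requires $F$ to have rank exactly $r-k+j$, i.e.\ $F = \cl(S,c_1,\ldots,c_j) =: F^{(j)}$. Thus the top flag is forced to match that of $\F^+(B)$ at each step.

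Pinning down the bottom flag is the subtler part. After all $k$ multiplications the surviving monomials correspond to chains $E = G^{(0)} \supseteq G^{(1)} \supseteq \cdots \supseteq G^{(k)} \supseteq G^{(k+1)} = \cl^\perp(T)$ satisfying $F^{(j)} \cup G^{(j)} = E$. The resulting biflag has length $n-1$ and is therefore maximal by Lemma \ref{lem:maximal}; the counting argument in that lemma's proof shows any maximal biflag has exactly one double jump, so the chain has exactly one strict inclusion. This forces each $G^{(j)}$ to equal either $E$ or $\cl^\perp(T)$, with a unique switching index $i' \in \{1,\ldots,k+1\}$. Two constraints then collapse $i'$: the biflat condition $F^{(i')} \cup G^{(i')} = E$ demands $\cl(S,c_1,\ldots,c_{i'})\cup\cl^\perp(T)=E$ and hence $i'\geq i$ by the defining property of $i$ in Proposition \ref{def:nbc regions}; the biflag condition --- that the unique double jump at position $r-k+i'-1$ have nonempty gap $E-(\cl(S,c_1,\ldots,c_{i'-1})\cup\cl^\perp(T))$ --- forces $i'\leq i$. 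Hence $i'=i$, producing precisely $\F^+(B)|\G^+(B)$ with coefficient $1$, since each application of Lemma \ref{lem:multbya} contributes coefficient $1$ per term.

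The main obstacle I anticipate is coordinating the top-flag rigidity with the bottom-flag analysis: at intermediate steps the bottom flats $G^{(j)}$ are genuinely unconstrained beyond the biflat condition, so one must wait until the full $k$-fold product is assembled before the global maximality constraint eliminates all but the target monomial. Formalizing the claim that non-initial intermediate top flags are eradicated requires invoking Lemma \ref{lem:aeradicates}(2) with the appropriate remaining power of $\gamma$ at each juncture, rather than only at the end.
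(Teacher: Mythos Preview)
Your proof is correct and follows essentially the same strategy as the paper's: multiply successively by $\gamma_{c_1},\ldots,\gamma_{c_k}$, use Lemma~\ref{lem:multbya} together with Lemma~\ref{lem:aeradicates} to force each new top flat to be exactly $\cl(S,c_1,\ldots,c_j)$, and then argue that the bottom flats and the switching index are uniquely determined by the biflat and biflag conditions.

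The only substantive difference is in how you deduce that each inserted coflat $G^{(j)}$ lies in $\{E,\cl^\perp(T)\}$. The paper observes this directly: since $T=B^\perp-\min B^\perp$ has $r^\perp$ elements and is independent in $\M^\perp$, the coflat $\cl^\perp(T)$ is a hyperplane, so there are no coflats strictly between it and $E$; Lemma~\ref{lem:multbya} already confines $G$ to that interval. You instead argue globally, via Lemma~\ref{lem:maximal}: the final biflag has length $n-1$, hence exactly one double jump, hence the chain $E=G^{(0)}\supseteq\cdots\supseteq G^{(k+1)}=\cl^\perp(T)$ can have only one strict inclusion. Both arguments are valid; the paper's is shorter and more local, while yours avoids invoking the rank of $\cl^\perp(T)$ but requires appealing to the counting in the proof of Lemma~\ref{lem:maximal} rather than just its statement. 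The determination of the switching index $i'$ via the two inequalities $i'\geq i$ (biflat condition at column $i'$) and $i'\leq i$ (nonempty gap at the double jump) matches the paper exactly.
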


\begin{proof}
Recall from Proposition \ref{def:nbc biflags} that, if we write
%\begin{eqnarray*}
\[
S = B - \IA(B) = \{e_1 > \cdots > e_{r-k}\} \ \ \text{and} \ \ 
T = (E-B) - \min(E-B) = \{e_{r-k+1} < \cdots < e_{n-k-1}\},
\]
%\end{eqnarray*}
then the table of the $\nbc$ monomial $x_{\F(B)|\G(B)}$ is
%\begin{equation}\label{table:nbc}
\[
\begin{array}{|cc|ccccccccccc|cc|}
\hline
\emptyset & \subsetneq &  F_1  & \subsetneq & \cdots & \subsetneq & F_{r-k} & \blue{\subsetneq} &  E &  = & \cdots & =  & E & = & E\\
E & = & E & = & \cdots & = & E & \blue{\supsetneq} & G_{r-k+1} & \supsetneq & \cdots & \supsetneq  & G_{n-k-1} & \supsetneq & \emptyset
\\
\hline
&& e_1  & > & \cdots &> & e_{r-k} &  & e_{r-k+1}&<& \cdots &<& e_{n-k-1} &&
\\
\hline
\end{array}
\ ,
\]
%\end{equation}
where  $F_j = \cl(e_1, \ldots, e_j)$ for $j \leq r-k$ and $G_j = \cl^\perp(e_j, \ldots, e_{n-k-1})$ for $j>r-k$. In particular, 
\[
F_{r-k} = \cl(S) \ \  \text{and}  \ \ G_{r-k+1}=\cl^\perp(T).
\]
We write
$\IA(B) = \{\green{c_1}>\green{c_2}>\cdots>\green{c_{k+1}}\}$, and
 multiply $x_{\F|\G}$ by $\gamma_{c_1}, \ldots, \gamma_{c_k}$ to compute
\[
x_{\F|\G}\gamma^k = x_{\F|\G}\gamma_{c_1}\gamma_{c_2}\cdots \gamma_{c_k}.
\]

Since $c_1 \notin F_{r-k}$, when we multiply $x_{\F|\G} \, \gamma_{c_1}$, Lemma \ref{lem:multbya} tells us that every resulting term $x_{(\F \cup F) | (\G \cup G)}$ corresponds to a biflat $F|G$ such that $F_{r-k} \cup c_1 \subsetneq F \subsetneq E$. Also, for such a term to resist further multiplication by $\gamma^{k-1}$, $\F \cup F$ must be an initial flag by  Lemma
\ref{lem:aeradicates}, so $\rank(F) = r-k+1$. This implies that $F = \cl(S, c_1)$; we denote $F$ by $F_{r-k+1}$. 

Similarly, since $c_2 \notin F_{r-k+1}$, every term in $x_{\F|\G} \, \gamma_{c_1}\gamma_{c_2}$ that resists further multiplication by $\gamma^{k-2}$ must introduce $F_{r-k+2}\coloneq  \cl(S, c_1, c_2)$ as the new flat in the flag. Continuing with this line of reasoning, we see that the nonzero terms in $x_{\F|\G} \, \gamma_{c_1}\gamma_{c_2}\cdots \gamma_{c_k}$ are obtained from the table of $x_{\F(B)|\G(B)}$ %\eqref{table:nbc} 
by adding new columns between columns $r-k$ and $r-k+1$ as 
\[
\begin{array}{|ccccccccccc|}
\hline
\cdots & \cl(S) & \subsetneq & \green{\cl(S, c_1)} & \subsetneq & \cdots & \subsetneq & \green{\cl(S, c_1, \ldots, c_k)} & \subsetneq & E & \cdots \\
\cdots &  E & \supseteq & \green{G'_{r-k+1}} & \supseteq & \cdots & \supseteq & \green{G'_r} & \supseteq & \cl^\perp(T) & \cdots \\
\hline
& e_{r-k} & & \green{c_1} & > & \cdots & > & \green{c_k} & & e_{r-k+1} & 
\\
\hline
\end{array}
\ ,
\]
for any choice of coflats $G'_{r-k+1} \supseteq \cdots \supseteq G'_r$, each of which is either $E$ or $\cl^\perp(T)$, since $r^\perp(T) = r^\perp - 1$.

The only freedom we appear to have left is the choice of the unique value of $1 \leq i \leq k+1$ for which $G'_{r-k+i-1} = E$ and $G'_{r-k+i} =  \cl^\perp(T)$:

\begin{equation*}
\hspace{-.5cm}
\begin{array}{|ccccc|}
\hline
\cdots & \green{\cl(S, c_1, \ldots, c_{i-1})} &\blue{\subsetneq} &  \green{\cl(S, c_1, \ldots, c_i)}  & \cdots  \\
\cdots & \green{E} & \blue{\supsetneq} &  \green{\cl^\perp(T)} & \cdots  \\
%E & = & \cdots & = & E & \supsetneq & \cl^\perp(e_{r-k+1}, \ldots, e_{n-k}) & \supsetneq & \cdots & \supsetneq  & \cl^\perp(e_{n-k}) \\
\hline
\end{array}
\end{equation*}

\noindent However, for this to be a valid flag, we must have 
\[
\cl(S, c_1, \ldots, c_{i-1}) \cup \cl^\perp(T) \neq E \ \  \text{and} \ \  \cl(S, c_1, \ldots, c_i) \cup \cl^\perp(T) = E. 
\]
Therefore, we do not have the freedom to choose $i$; it must be the smallest index for which $\cl(S, c_1, \ldots, c_i) \cup \cl^\perp(T) = E$. 

This precisely matches the description of the extended $\nbc$ biflag of $B$ in Definition \ref{def:nbc regions}. Thus the unique surviving term in $x_{\F(B)|\G(B)}\gamma_{c_1}\gamma_{c_2}\cdots \gamma_{c_k}$ is precisely $x_{\F^+(B)|\G^+(B)}$, as we wished to show.
\end{proof}

\begin{example} \label{ex:nbcmonomial}
Let us return to Example \ref{ex:partialnbcmonomial} and multiply
 the $\nbc$ monomial of $B=\green{01}5\green{6}78b$ with $\IA(B) = \green{016}$ by $\gamma^2$ by computing 
 \[
(x_{b|E} \, x_{8b|E} \, x_{78b|E} \, x_{578b|E} \,  x_{E|03469a} \, x_{E|469a} \, x_{E|69a} \, x_{E|9}) \cdot  \gamma_6 \cdot \gamma_1.
 \]
 This adds two new columns to the middle of the table in  Example \ref{ex:partialnbcmonomial}, shown in green. The new entries in the top row must be $\cl(578b\green{6}) = 5678b$ and $\cl(578b\green{61}) = 1256789ab$. The new entries in the bottom row can equal $E$ or $03469a$. Since $5678b \cup 03469a \neq E$, and we must have $F_i \cup G_{i} \neq E$ for all $i$, the first one must be $E$. 
Since $1256789ab \cup 03469a = E$, and we must have $F_i \cup G_{i+1} \neq E$ for some $i$, the second one must be $03469a$. Thus the only possible table is
 \[
\begin{array}{|c|cccccccccc|c|}
\hline
\emptyset & b & 8b & 78b & 578b & \green{5678b} & \green{1256789ab} & E & E & E & E & E\\
E & E & E & E & E & \green{E} & \green{03469a} & 03469a & 469a & 69a & 9 & \emptyset \\
\hline
& b & 8 & 7 & 5 & \green{6} & \green{1} & 3 & 4 & 9 & a & \\
\hline
\end{array} \ ,
\]
and the resulting monomial is the extended  $\nbc$ monomial of $B$
\[
x_{b|E} \, x_{8b|E} \, x_{78b|E} \, x_{578b|E} \, \green{x_{5678b, E} \, x_{1256789ab, 03469a}} \, 
 x_{E|03469a} \, x_{E|469a} \, x_{E|69a} \, x_{E|9},
\]
 corresponding to the extended $\nbc$ biflag of Example \ref{ex:nbc region}.
\end{example}

We have now made some progress towards our proof of Theorem \ref{thm:main}, which says that 
\[
\gamma^k \delta^{n-k-1} = \sum_{\substack{B \nbc \textrm{basis} \\ |\IA(B)| = k+1}} x_{\F^+(B)| \G^+(B)}.
\]
Propositions \ref{prop:delta expansion III} and \ref{prop:nbc is resistant} show that all terms in the right hand side of this expression do arise in $\gamma^k \delta^{n-k-1}$. Proving that no other terms appear requires significantly more work; this is the content of Section \ref{sec:theproof}.

\section{Upper bound for $\gamma^k\delta^{n-k-1}$.}\label{sec:theproof}
%\section{The degree of $\gamma^k\delta^{n-k-1}$ in the Chow ring $A_{\M,\M^\perp}$ is equal to $h_{r-k}(\RBC(\M))$.}\label{sec:theproof}

%\subsection{Resistant terms in the canonical expansion of $\delta^{n-k-1}$.}

Proposition \ref{prop:expansion} shows that the terms $x_{\F|\G}$ in the canonical expansion of $\delta^{n-k-1}$ are given by the set $\T_{\M,\M^\perp}$ of combinatorially determined tables $(\F|\G,\ee)$. Lemma \ref{lem:aeradicates} shows that multiplication by high powers of $\gamma$ eradicates many of these monomials. The main goal of this section will be to characterize those monomials in the canonical expansion of $\delta^{n-k-1}$ that resist multiplication by $\gamma^k$.

\begin{definition}
A monomial $x_{\F|\G}$ of degree $n-k-1$ and the corresponding to table $(\F|\G,\ee) \in \T_{\M,\M^\perp}$ 
are said to be  \emph{resistant} if 
$x_{\F|\G}\, \gamma^k \neq 0$
in the conormal Chow ring of $\M$.
\end{definition}

We saw in Proposition \ref{prop:nbc is resistant} that any $\nbc$ monomial does resist multiplication by $\gamma^k$, and gives rise to its corresponding extended $\nbc$ monomial. We will eventually show in Proposition \ref{prop:delta expansion II} that these are in fact the only resistant monomials, and hence that $\deg(\gamma^k\delta^{n-k-1}) = h_{r-k}(\BC(\M))$. 
This proof will require several steps, which we carry out in the following subsections.

\subsection{The jump sets of a resistant term of $\delta^{n-k-1}$.}

Recall the notion of jump sets of  $\F|\G$ from Definition \ref{def:jumpsets}. 
We write $\Tr_k \M$ for the $k$-th truncation of $\M$.
%of length $\ell$ are
%Define the \emph{jump sets} of $\F=\{F_1 \subseteq \cdots \subseteq F_{n-k-1}\}$ and $\G = \{G_1 \supseteq \cdots \supseteq G_{n-k-1}\}$ to be 
%\begin{eqnarray*}
%\j(\F)&=&\set{j\colon 0\leq j\leq \ell-1 \text{~and~} F_j \subsetneq F_{j+1}} \\
%\j(\G)&=&\set{j\colon 0\leq j\leq \ell-1 \text{~and~} G_j \supsetneq G_{j+1}}
%\end{eqnarray*}
%and the double jump set is $\j(\F) \cap \j(\G)$, with the convention that $F_0 = G_\ell = \emptyset$ and $F_\ell = G_0 = E$. 

\begin{lemma} \label{lem:full flags}
If the monomial $x_{\F|\G}$ arises in the canonical expansion of $\delta^{n-k-1}$ and resists multiplication by $\gamma^k$,  then
\begin{enumerate}[(1)]\itemsep 5pt
\item
$\F|\G$ has a unique double jump, and
\item 
$\F$ and $\G$ are complete flags of nonempty flats in $\Tr_k \M$ and $\M^\perp$, with possible repetitions.
% $0 \leq d \leq n-k-1$.
% for which both $F_d \subsetneq F_{d+1}$ 
%and $G_d\supsetneq G_{d+1}$; that is,
%\[
%\j(\F) \cup \j(\G) = \{0, 1, \ldots, n-k-1\}, 
%\qquad 
%\j(\F) \cap \j(\G) = \{d\}.
%\]
\end{enumerate}
\end{lemma}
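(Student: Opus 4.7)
The plan is to pin down the numerical invariants $a=|\j(\F)|$, $b=|\j(\G)|$, and $d=|\j(\F)\cap\j(\G)|$ by combining the resistance criteria of Lemma~\ref{lem:aeradicates} with a rank-sum bookkeeping. I will use the identity $s=a-1$ relating the number $s$ of distinct proper flats of $\F$ to the number of $\F$-jumps: the extended chain $\emptyset=F_0\subseteq F_1\subseteq\cdots\subseteq F_{m+1}=E$ has exactly $a+1$ distinct flats, two of which are $\emptyset$ and $E$. The analogous identity $s^\perp=b-1$ holds for the distinct proper coflats of $\G$.

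The first step is to sandwich $a$ between two bounds. From above, Lemma~\ref{lem:aeradicates}(1) yields $s\leq r-k$, hence $a\leq r-k+1$. From below, any nonzero term in $x_{\F|\G}\gamma^k$ corresponds to a biflag of length $n-1$ containing $\F|\G$, which is maximal by Lemma~\ref{lem:maximal} and therefore has $s'=r$ distinct proper flats. Since this maximal biflag has exactly $k$ additional biflats beyond those of $\F|\G$, at most $k$ new distinct proper flats can be introduced to $\F$, so $s'\leq s+k$ and hence $s\geq r-k$. Therefore $s=r-k$ and $a=r-k+1$ exactly. Lemma~\ref{lem:aeradicates}(2) now forces $\F$ to be initial, and combined with $s=r-k$, the distinct flats of $\F$ (including $E$ via the extension $F_{m+1}=E$) have ranks $1,2,\ldots,r-k,r+1$ in $\M$. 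This is precisely the statement that $\F$ is a complete flag in $\Tr_k\M$.

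For $\G$, no direct algebraic constraint is available, and I will extract its completeness purely from counting. Since the biflats of $\F|\G$ are pairwise distinct, consecutive biflats differ in the flat or the coflat part, so every position $j\in\{0,1,\ldots,m\}$ lies in $\j(\F)\cup\j(\G)$, yielding $a+b-d=m+1=n-k$ and thus $b=r^\perp+d$. The total corank decrease of $\G$ equals $r^\perp+1$ and is distributed over $b$ jumps of size at least one, so $b\leq r^\perp+1$ and hence $d\leq 1$. Every biflag has at least one double jump, so $d\geq 1$; together these give $d=1$, proving~(1), and $b=r^\perp+1$. This last equality forces every $\G$-jump to have size exactly one, so $\G$ is a complete flag in $\M^\perp$, completing~(2).

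The most delicate step is the lower bound $s\geq r-k$: it must be argued at the level of the resulting maximal biflags rather than through any particular polynomial expansion, since different choices of $c$ in $\gamma_c$ produce distinct polynomials that agree only in the Chow ring. Once this combinatorial bound is paired with Lemma~\ref{lem:aeradicates}(2), the rest of the argument is a clean accounting of ranks and jumps via the identity $a+b-d=n-k$.
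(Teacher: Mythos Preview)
Your proof is correct, and the overall counting scheme (pinning down $a=|\j(\F)|$, $b=|\j(\G)|$, $d=|\j(\F)\cap\j(\G)|$ via $a+b-d=n-k$) matches the paper's. The difference is in how you obtain $s=r-k$.

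The paper bounds both $|\j(\F)|\le r-k+1$ (from Lemma~\ref{lem:aeradicates}) and $|\j(\G)|\le r^\perp+1$ (from the rank of $\M^\perp$), and then inclusion--exclusion together with $d\ge 1$ forces all inequalities to be equalities at once. You instead establish $s\ge r-k$ directly, by passing to a maximal biflag $\F'|\G'\supseteq\F|\G$ and observing that it has exactly $r$ distinct proper flats while only $k$ new biflats were adjoined. This is a legitimate alternative, but two points are worth noting. First, the fact that every maximal biflag has exactly $r$ distinct proper flats in $\F$ is not stated as such in the paper; it is implicit in the proof of Lemma~\ref{lem:maximal} (the rank-sum analysis forces every jump to have rank exactly one), so you should cite that analysis rather than the lemma's statement. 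Second, your lower bound does not actually use resistance at all: any biflag of length $n-k-1$ extends to a maximal one by Lemma~\ref{lem:maximal}, so $s\ge r-k$ holds unconditionally. Your final paragraph worries about the well-definedness of ``a nonzero term in $x_{\F|\G}\gamma^k$'' across different polynomial expansions, but that concern evaporates once you recognize the bound is purely combinatorial. The paper's route sidesteps this by never needing a lower bound on $s$ separately.
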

%
%\begin{proof}
%We note that $\j(\F)\cup\j(\G)=[n-k-1]$: for each $i$, 
%at least one of the containments $F_i\subseteq F_{i+1}$
%and $G_i\supseteq G_{i+1}$ is proper, since no ray in a biflag is repeated.
%The rank of
%$\Tr_k\M$ is $r-k+1$, so
%$\gammabs{\j(\F)}\leq  r-k+1$ and $\abs{\j(\G)}\leq n-r$, which implies
%\begin{equation}\label{eq:full flags 1}
%n-k\leq \abs{\j(\F)}+\abs{\j(\G)}\leq n-k+1.
%\end{equation}
%By Proposition~\ref{prop:biflags2}, for some $d$ we have $F_d\cup G_{d+1}\neq E$,
%using the fact that $\M$ is not a free matroid.
%Since $F_d\cup G_d=E$, clearly $G_d\neq G_{d+1}$; similarly, $F_d\neq F_{d+1}$.
%Then $d\in \j(\F)\cap\j(\G)$.  In view of the bounds
%\eqref{eq:full flags 1}, it must be the case that $\abs{\j(\F)}=r-k+1$ and
%$\abs{\j(\G)}=n-r$, 
%as well as $\set{d}=\j(\F)\cap\j(\G)$.  That
%is, $\F$ and $\G$ are full flags, and the index $d$ is unique.
%
%

\begin{proof}
Since $x_{\F|\G} \gamma^k$ is nonzero,  the first part of Lemma \ref{lem:aeradicates} tells us that $\F$ contains $s \leq r-k$ distinct proper flats, so $\abs{\j(\F)} =s+1 \leq r-k+1$. Also, since $\M^\perp$ has rank $r^\perp+1 = n-r$,
we have $\abs{\j(\G)}\leq n-r$.
 On the other hand, since a square-free monomial cannot contain repeated terms, $\j(\F)\cup \j(\G)= \set{0, 1,\ldots,n-k-1}$. 
Therefore, 
%\begin{equation}\label{eq:skips}
\[
n-k \leq 
 \abs{\j(\F) \cup \j(G)}+\abs{\j(\F) \cap \j(\G)} = \abs{\j(\F)}+\abs{\j(\G)}  \leq n-k+1,
\]
%\end{equation}
and the number of double jumps is $\abs{\j(\F)\cap \j(\G)}\leq1$. But $\F|\G$ has at least one nonempty gap $D_d$, which guarantees a double jump $d$.

%Now, there is an index $d$ for which $F_d\cup G_{d+1}\neq E$, and therefore,  $F_d \subsetneq F_{d+1}$ and $G_d \supsetneq G_{d+1}$; that is,  $d \in \j(\F)\cap\j(\G)$. Then \eqref{eq:skips} implies that
%$\j(\F)\cap \j(\G)=\set{d}$ and both $\abs{\j(\F)}=r-k+1$ and $\abs{\j(\G)}=r^\perp+1$, proving claim 2.

The above analysis also implies that $\F$ contains $s=r-k$ distinct proper flats --  which must have ranks $1, 2, \ldots, r-k$ by the second part of Lemma \ref{lem:aeradicates} -- 
and that $\G$ has flats in every rank of $\M^\perp$. %This proves claim 1.
\end{proof}

It follows that any table $(\F|\G,\ee)$ arising in the canonical expansion of $\delta^{n-k-1}$ that resists multiplication by $\gamma^k$ has the form
%\begin{equation} \label{eq:augmentedtable}
\[
%(\F|\G,\ee): 
\begin{array}{|cc|ccccccccccc|cc|}
\hline
\emptyset & \subsetneq & F_1 & \subseteq & \cdots & \subseteq & F_d &
\blue{\subsetneq} & F_{d+1} & \subseteq & \cdots & \subseteq  & F_{n-k-1} & \subseteq & E\\
E & \supseteq & G_1 & \supseteq & \cdots & \supseteq & G_d & 
\blue{\supsetneq} & G_{d+1} & \supseteq & \cdots & \supseteq  & G_{n-k-1} & \supsetneq & \emptyset\\
\hline
&&e_1 && \cdots && e_d & \blue{D} & e_{d+1}&& \cdots && e_{n-k-1}&&\\
\hline
\end{array} \,  %\smallskip  
\]
%\end{equation}
where $d$ is the unique double jump of $\F|\G$. 
We write
\[
D\coloneq  D_d =  E-(F_d \cup G_{d+1}) = (F_{d+1}-F_d) \cap (G_d - G_{d+1})
\]
for the unique nonempty gap. In every column other than the $d$-th, one inclusion is strict and the other one is an equality. 
From now on, we will also record the nonempty gap $D$ in the bottom row of the table $(\F|\G;\ee)$. This is redundant information, but it will be useful visually in the proofs that follow.

%Let us record a simple but useful observation.
%
%\begin{definition}\label{def:doublejump}
%We call $d = j(\F) \cap j(\G)$ the \emph{double jump} of the table $(\F|\G,\ee)$. A priori, this double jump could occur at $d=0$ or $d=n-k-1$.
%We also define the \emph{gap} 
%\[
%D \coloneq  E-(F_d \cup G_{d+1}) = (F_{d+1}-F_d) \cap (G_d - G_{d+1})
%\]
%and write it under the double jump in the \emph{augmented table} \eqref{eq:augmentedtable} of $(\F|\G,\ee)$.
%\end{definition}

\begin{remark}
For each index $i$, the flat $F_i$ contains the bottom row entries below it and to its left; namely, $e_1, \ldots, e_i$, and $D$ if $i \geq d+1$. Similarly, the coflat  $G_i$ contains the bottom row entries below it and to its right; $e_i, \ldots, e_{n-k-1}$, and $D$ if $i \leq d$.
\end{remark}

We continue with two more easy but important properties of the canonical expansion. The first one tells us that the arrival sequence $\ee$ and the index $d$ of the double jump completely determine which inclusions are strict in the table of $(\F|\G,\ee)$. We define the \emph{descent set} and \emph{ascent set} of $\ee$ by
\[
\Des(\ee) = \{i \, : \, e_i > e_{i+1}\} \ \ \text{and} \ \ 
\Asc(\ee) = \{i \, : \, e_i < e_{i+1}\}.
\]
%be the \emph{descent set} and \emph{ascent set} of $\ee$, respectively.

\begin{lemma}\label{lem:ordering}
If $i\in \j(\F)-\j(\G)$, then $e_i>e_{i+1}$.  If $i\in \j(\G)-\j(\F)$, then $e_i<e_{i+1}$. Therefore,
\[
\j(\F) = \Des(\ee) \cup \{0,d\} \ \ \text{and} \ \  \j(\G) = \Asc(\ee) \cup \{d,n-k-1\}.
\]
\end{lemma}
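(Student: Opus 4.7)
The plan is to exploit the max-property from Proposition \ref{prop:expansion}: $e_i = \max\big(E - \bigcup_{j\colon e_j > e_i}(F_j \cap G_j)\big)$, or equivalently, $e_i$ lies outside every intersection $F_j \cap G_j$ with $e_j > e_i$. Both implications of the lemma will follow by a direct contradiction against this fact. For the first, I would assume $i \in \j(\F) - \j(\G)$ (so $F_i \subsetneq F_{i+1}$ and $G_i = G_{i+1}$) and suppose toward contradiction that $e_i < e_{i+1}$; then $F_{i+1} \cap G_{i+1}$ appears in the union covering the $e_j > e_i$, while $e_i \in F_i \subseteq F_{i+1}$ and $e_i \in G_i = G_{i+1}$ together force $e_i \in F_{i+1} \cap G_{i+1}$, contradicting the max-property. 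The second implication is symmetric: if $i \in \j(\G) - \j(\F)$ and $e_i > e_{i+1}$, then $e_{i+1} \in F_{i+1} = F_i$ and $e_{i+1} \in G_{i+1} \subseteq G_i$ place $e_{i+1} \in F_i \cap G_i$, contradicting the max-property applied to $e_{i+1}$.

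To deduce the set identities, I would combine these implications with the structural facts recorded just above: Lemma \ref{lem:full flags} gives $\j(\F)\cap\j(\G)=\{d\}$ and $\j(\F)\cup\j(\G)=\{0,1,\ldots,n-k-1\}$, while $0\in\j(\F)$ and $n-k-1\in\j(\G)$ hold automatically from $F_0=\emptyset$ and $G_{n-k}=\emptyset$. For each $i \in \{1,\ldots,n-k-2\}\setminus\{d\}$, the sets $\j(\F)-\j(\G)$ and $\j(\G)-\j(\F)$ partition the possibilities; coupling this with $e_i\neq e_{i+1}$ (the $e_j$ are distinct) and the two implications above upgrades them to the equivalences $i\in\j(\F)\Leftrightarrow i\in\Des(\ee)$ and $i\in\j(\G)\Leftrightarrow i\in\Asc(\ee)$. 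Reattaching the boundary/double-jump indices $\{0,d\}$ and $\{d,n-k-1\}$ then yields the claimed equalities.

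The conceptual content is essentially a one-line comparison against the max-property, so I expect the only real obstacle to be bookkeeping around the endpoints. In particular, $d$ can coincide with $0$ or $n-k-1$, and I would need to check that the equivalences and the decomposition remain correct in those edge cases (for instance, $m \in \j(\F)$ forces $m = d$ because $m \in \j(\G)$ is automatic). Related to this, I must take care not to conflate the range $\{1,\ldots,n-k-2\}$ on which $\Des(\ee)$ and $\Asc(\ee)$ live with the range $\{0,1,\ldots,n-k-1\}$ that contains $\j(\F)$ and $\j(\G)$.
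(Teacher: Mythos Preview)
Your proposal is correct and follows essentially the same approach as the paper: both argue the first implication by contradiction, observing that if $i\in\j(\F)-\j(\G)$ and $e_i<e_{i+1}$ then $e_i\in F_i\cap G_i\subseteq F_{i+1}\cap G_{i+1}$, which violates the arrival/max-property from Proposition~\ref{prop:expansion}; the paper simply says ``by symmetry'' for the second implication where you spell out the dual argument. The paper leaves the deduction of the set identities to the reader after recording $0\in\j(\F)$ and $n-k-1\in\j(\G)$, whereas you write out the details explicitly using $\j(\F)\cap\j(\G)=\{d\}$ and $\j(\F)\cup\j(\G)=\{0,\ldots,n-k-1\}$ from Lemma~\ref{lem:full flags}; your bookkeeping remarks about the endpoints are sound and the edge cases $d\in\{0,n-k-1\}$ cause no trouble.
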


\begin{proof}
It is clear from the definitions that $0 \in \j(\F)$ and $n-k-1 \in \j(\G)$.
By symmetry, it is enough to prove the first assertion. Assume for the sake of contradiction that $i\in \j(\F)-\j(\G)$ and $e_i<e_{i+1}$, so the table of $(\F|\G,\ee)$ contains
\[
\begin{array}{|ccccc|}
\hline
\cdots & F_i & \subsetneq & F_{i+1}& \cdots\\
\cdots & G_i & = & G_{i+1} & \cdots \\
\hline
\cdots &e_i & < & e_{i+1}& \cdots \\
\hline
\end{array}\, .
\]
Then the biflat $F_i|G_i$ arrives to the monomial $x_{\F|\G}$ after the biflat $F_{i+1}|G_{i+1}$ does, so $e_i \notin F_{i+1} \cap G_{i+1}$. This contradicts the fact that $e_i \in F_i \cap G_i \subseteq F_{i+1} \cap G_{i+1}$. 
\end{proof}

\begin{lemma}\label{lem:crossover}
If $i<j$ and $e_i<e_j$, then $e_i\not\in G_j$. If $i<j$ and $e_i>e_j$, then
$e_j\not\in F_i$.
\end{lemma}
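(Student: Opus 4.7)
The plan is to derive both statements by contradiction from the maximality condition (2) of Proposition \ref{prop:expansion}, which says that
\[
e_i = \max\Big(E - \bigcup_{k:\,e_k > e_i}(F_k \cap G_k)\Big)
\]
for each index $i$; in particular, $e_i \notin F_k \cap G_k$ whenever $e_k > e_i$. The idea is that if either conclusion failed, the smaller of $e_i, e_j$ would end up in the intersection $F_\ell \cap G_\ell$ for some index $\ell$ whose witness $e_\ell$ is strictly larger, violating this avoidance.

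For the first assertion I assume $i < j$, $e_i < e_j$, and, toward a contradiction, $e_i \in G_j$. Because $\F$ is an ascending chain and $i \leq j$, we have $F_i \subseteq F_j$; combined with $e_i \in F_i$ from condition (2), this gives $e_i \in F_j \cap G_j$. Since $e_j > e_i$, the index $j$ belongs to the union on the right-hand side of the maximality condition for $e_i$, forcing $e_i$ into that union and producing a contradiction.

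The second assertion is handled symmetrically: assume $i < j$, $e_i > e_j$, and $e_j \in F_i$. Since $\G$ is a descending chain and $i \leq j$, we have $G_j \subseteq G_i$; combined with $e_j \in G_j$, this yields $e_j \in F_i \cap G_i$. As $e_i > e_j$, the index $i$ now appears in the union associated to $e_j$, again contradicting the avoidance property.

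There is no real obstacle here; the lemma is essentially an unpacking of the arrival rule for the canonical expansion. The only care needed is to correctly use the opposite monotonicities of the two chains when propagating $e_i \in F_i$ up to $e_i \in F_j$ (and dually $e_j \in G_j$ up to $e_j \in G_i$), and to keep track of which index plays the role of the larger witness in each case.
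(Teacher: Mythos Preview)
Your proof is correct and follows essentially the same route as the paper: both use the avoidance property $e_i \notin F_j \cap G_j$ from Proposition~\ref{prop:expansion} together with $e_i \in F_i \subseteq F_j$ to conclude $e_i \notin G_j$. The only cosmetic differences are that the paper argues directly rather than by contradiction and dispatches the second assertion by symmetry instead of writing it out.
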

\begin{proof}
It suffices to prove the first assertion. A part of the table of $(\F|\G , \ee)$ reads
\[
\begin{array}{|ccccc|}
\hline
F_i & \subseteq  & \cdots & \subseteq & F_j \\
G_i & \supseteq & \cdots & \supseteq & G_j\\
\hline
e_i &  & <  & & e_j
\\
\hline
\end{array} \ , 
\]
and hence $F_i|G_i$ appears  in the term $x_{\F|\G}$ after $F_j|G_j$ and $e_i \notin F_j \cap G_j$. Since $e_i \in F_i \subseteq F_j$, we must have $e_i \notin G_j$.
\end{proof}

\subsection{The resistant terms of $\delta^{n-k-1}$ are determined by the bottom row of their table.}

We now show that any resistant table $(\F|\G , \ee)$ is completely determined by the bottom row of its table, that is, by $\ee$, $d$, and $D$.  

\begin{proposition} \label{prop:bottomrow}
Let $(\F|\G,\ee)$, $D$, and $d$ be as above.  %be a table arising in the canonical expansion of $\delta^{n-k-1}$ such that  $x_{\F|\G}$  resists multiplication by $\gamma^k$. Let $d = \j(\F) \cap \j(\G)$ be the location of the double jump and $D = E-(F_d \cup G_{d+1}) =  (F_{d+1}-F_d) \cap (G_d - G_{d+1})$ be the nonempty gap.
\begin{enumerate}[(1)]\itemsep 5pt
\item\label{prop:bottomrow:1} %  \label{lem:BB'}
For any $x \in D$, we have
\begin{align*}
B(x) &\coloneq \text{$\{e_{i+1} \mid i \in \j(\F) - d\} \cup x$  is a basis of  $\Tr_k \M$, and} \\
B^\perp(x) &\coloneq \text{$\{e_i \mid i \in \j(\G) - d\} \cup x$  is a basis of  $\M^\perp$}. 
\end{align*}
\item \label{prop:bottomrow:2} % \label{lem:btoFG} 
For any $x \in D$, the flags $\F$ and $\G$ are given by
\begin{align*}
F_j &= \begin{cases}
\cl(\{e_{i+1} \mid i \in \j(\F)-d, \, i+1 \leq j\}) & \textrm{ if } j \leq d, \\
\cl(x \cup \{e_{i+1} \mid i \in \j(\F)-d, \, i+1 \leq j\}) & \textrm{ if } j > d, 
\end{cases} \\
G_j &= \begin{cases}
\cl^\perp(x \cup \{e_i \mid i \in \j(\G)-d,\,  i \geq j\}) & \textrm{ if } j \leq d, \\
\cl^\perp(\{e_i \mid i \in \j(\G)-d, \, i \geq j\}) & \textrm{ if } j > d. 
\end{cases}
\end{align*}

\item \label{prop:bottomrow:3} % \label{lem:ranks} 
The ranks of the flags $\F$ and $\G$ are given by
%\begin{align*}
\[
\rank(F_j) = \abs{\j(\F)_{<j}} \ \ \text{and} \ \ 
\rank^\perp(G_j) = \abs{\j(\G)_{\geq j}}. 
\]
%\end{align*}
\end{enumerate}
\end{proposition}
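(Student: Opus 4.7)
The plan is to prove the three parts in the order (3), (2), (1), since the rank data from (3) feeds the induction for (2), and then (1) follows by a size count using (2). Part (3) is essentially immediate from Lemma \ref{lem:full flags}: because $\F$ is a complete flag of nonempty flats in $\Tr_k\M$ (with repetitions) and $\G$ is a complete flag in $\M^\perp$, each jump in $\j(\F)$ contributes one unit of rank in $\Tr_k\M$ (equivalently, in $\M$ for proper flats), and summing gives $\rank(F_j) = |\j(\F)_{<j}|$; the symmetric argument from $G_{n-k}=\emptyset$ gives $\rank^\perp(G_j) = |\j(\G)_{\ge j}|$.

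For part (2) I argue by induction on $j$, defining $B_j$ to be the candidate generating set on the right-hand side of the proposed formula and claiming that $\cl B_j = F_j$ with $B_j$ independent in $\M$. The inductive step splits into three cases. If $j-1 \notin \j(\F)$, both $F_j$ and $B_j$ are unchanged. If $j-1 \in \j(\F)-d$, uniqueness of the double jump forces $j-1 \in \j(\F) - \j(\G)$, so Lemma \ref{lem:ordering} yields $e_{j-1} > e_j$ and then Lemma \ref{lem:crossover} gives $e_j \notin F_{j-1}$; together with $e_j \in F_j$ and $\rank F_j = \rank F_{j-1}+1$ (from part (3)), this produces $F_j = \cl(F_{j-1} \cup \{e_j\})$ and preserves independence. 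If $j-1 = d$, the analogous conclusion $F_{d+1} = \cl(F_d \cup \{x\})$ follows from $x \in D \subseteq F_{d+1} - F_d$ together with part (3). Iterating gives the formula for $F_j$, and a symmetric induction, running from high $j$ downward and using the $\j(\G)-\j(\F)$ clauses of Lemmas \ref{lem:ordering} and \ref{lem:crossover}, gives the formula for $G_j$.

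Part (1) then follows by a size count. Since $n-k-1 \in \j(\G)$ always and $\j(\F) \cap \j(\G) = \{d\}$, every $i \in \j(\F)-d$ satisfies $i \ne n-k-1$, hence $i+1 \le n-k-1$, so each $e_{i+1}$ is well-defined and $|B(x)| = 1 + |\j(\F)-d| = r-k+1 = \rank\Tr_k\M$. Independence of $B(x)$ in $\M$ is built into the induction of part (2), so $B(x)$ is a basis of $\Tr_k\M$; the dual argument shows $B^\perp(x)$ is a basis of $\M^\perp$. The main subtlety, and the chief obstacle to keep straight, is that in the formula for $F_j$ with $j > d$ the flat $F_j$ may be the top element $E$ of $\Tr_k\M$ (of rank $r+1$ in $\M$), while the right-hand side has only $r-k+1$ elements; the closure must therefore be interpreted in $\Tr_k\M$, where it coincides with the $\M$-closure on proper flats and gives $E$ exactly when the underlying set is independent of size $r-k+1$, which the induction guarantees.
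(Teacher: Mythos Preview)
Your proof is correct and follows essentially the same line as the paper's: both arguments use Lemma~\ref{lem:full flags} for the rank information, and at each non-double jump $i\in\j(\F)-d$ invoke Lemmas~\ref{lem:ordering} and~\ref{lem:crossover} to get $e_{i+1}\notin F_i$, whence $F_{i+1}=\cl(F_i\cup e_{i+1})$ and independence is preserved; at $i=d$ the role of $e_{i+1}$ is played by $x\in D\subseteq F_{d+1}-F_d$. The only difference is organizational: the paper runs the induction once to establish (1) and (2) simultaneously and then reads off (3) from the size of the generating sets, whereas you extract (3) first from Lemma~\ref{lem:full flags} and feed it into the induction for (2), deriving (1) at the end by the size count. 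Your remark about interpreting the closure in $\Tr_k\M$ when $F_j=E$ is a genuine point that the paper does not make explicit; since the formula is only applied later to proper flats (in the proof of Proposition~\ref{prop:nomixedbiflats}), the issue is harmless there, but your treatment is the more careful one.
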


The jump sets $\j(\F)$ and $\j(\G)$ are given by Lemma \ref{lem:ordering}, and hence $\F$ and $\G$ are determined by $\ee$, $d$, and $D$.  
Thus, the biflag $\F|\G$ is determined by the arrival sequence $\ee$, the double jump $d$, and the nonempty gap $D$.

\begin{proof}%[Proof of Lemmas \ref{lem:BB'}, \ref{lem:btoFG}, and \ref{lem:ranks}]
Let us treat $B(x)$ as an ordered set, ordered from left to
right.  
If $i \in \j(\F) - \{d\}$ then $F_i \subsetneq F_{i+1}$ and $\rank(F_{i+1}) = \rank(F_i) + 1$ by the first part of Lemma \ref{lem:aeradicates}.
Also, Lemmas \ref{lem:ordering} and \ref{lem:crossover} tell us that $e_i > e_{i+1}$ and $e_{i+1} \notin F_i$. This implies that $F_{i+1} = \cl(F_i \cup e_{i+1})$, and that $e_{i+1}$ is independent from the earlier terms in $B(x)$. For $i=d$, since $x \in F_{d+1}-F_d$ by definition, we have that $F_{d+1} = \cl(F_d \cup x)$ and $x$ is independent from the earlier terms in $B(x)$. The same argument shows the analogous claims for $B^\perp(x)$. This proves the first and the second parts of the proposition.
The third part follows from %Lemma \ref{lem:ranks} follows from
\[
 \abs{\j(\F)_{<j}} = \begin{cases}
\abs{(\j(\F)-d)_{<j}} & \textrm{ if } j \leq d, \\
\abs{(\j(\F)-d)_{<j}} + 1 & \textrm{ if } j > d, \\
\end{cases}
\qquad
 \abs{\j(\G)_{\geq j}} = \begin{cases}
\abs{(\j(\F)-d)_{\geq j}} + 1 & \textrm{ if } j \leq d, \\
\abs{(\j(\F)-d)_{\geq j}} & \textrm{ if } j > d. \\
\end{cases}.
\]
This completes the proof.
\end{proof}

\subsection{The resistant terms of $\delta^{n-k-1}$ have no mixed biflats.}\label{ss:nomixedbiflats}

Proposition \ref{prop:bottomrow} tells us that, in order to describe the tables $(\F|\G,\ee)$ arising in the canonical expansion of $\delta^{n-k-1}$ that resist multiplication by $\gamma^k$, we may focus on the bottom row of their tables, that is, on $\ee$, $d$, and $D$. We now pursue this analysis further.
Call a biflat $F|G$ \emph{mixed} if both $F$ and $G$ are proper flats of $\M$ and $\M^\perp$, respectively.

\begin{proposition}\label{prop:nomixedbiflats} 
If the table $(\F|\G,\ee)$ arises in the canonical expansion of $\delta^{n-k-1}$ and the monomial $x_{\F|\G}$ resists multiplication by $\gamma^k$, then
%Let $(\F|\G,\ee)$, $D$, and $d$ be as above. 
\begin{enumerate}[(1)]\itemsep 5pt
\item
its unique double jump is at $d=r-k$, and 
\item
the resulting monomial $x_{\F|\G}$ has no mixed biflats, and its table is of the form
\[
\begin{array}{|cc|ccccccccccc|cc|}
\hline
\emptyset & \subsetneq &
F_1  & \subsetneq & \cdots & \subsetneq & F_{r-k} & \blue{\subsetneq} & E &  = & \cdots & =  & E
& = & E \\
E & = &
E & = & \cdots & = & E & \blue{\supsetneq} & G_{r-k+1} & \supsetneq & \cdots & \supsetneq  & G_{n-k-1}
& \supsetneq & \emptyset \\
\hline
&&
e_1  & > & \cdots &> & e_{r-k} & \blue{D}& e_{r-k+1}&<& \cdots &<& e_{n-k-1}
&& 
\\
\hline
\end{array}\, .
\]
%where, as before, the nonempty gap is
%\[
%D \coloneq  E-(F_{r-k} \cup G_{r-k+1}) = (F_{r-k+1}-F_{r-k}) \cap (G_{r-k} - G_{r-k+1}).
%\]
\end{enumerate}
\end{proposition}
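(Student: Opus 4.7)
The plan is to pin down the structure of $\F|\G$ using counting, the initiality forced by Lemma \ref{lem:aeradicates}, and the closure formulas of Proposition \ref{prop:bottomrow}.

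By Lemma \ref{lem:full flags}, $\F|\G$ has a unique double jump $d$, $\abs{\j(\F)}=r-k+1$, $\abs{\j(\G)}=n-r$, and $\F$ is initial with proper flats of $\M$-ranks $1, 2, \ldots, r-k$. The key observation is that the rank formulas of Proposition \ref{prop:bottomrow}(3), read in $\Tr_k\M$, yield
\[
F_j=E \iff j > \max \j(\F), \qquad G_j=E \iff j \leq \min \j(\G).
\]
Thus the biflat $F_j|G_j$ is mixed precisely for $\min \j(\G) < j \leq \max \j(\F)$, and the absence of mixed biflats is equivalent to $\max \j(\F) \leq \min \j(\G)$. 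Since $d$ lies in both jump sets, $\min\j(\G)\leq d\leq \max\j(\F)$, so the task reduces to showing $\max\j(\F)=d=\min\j(\G)$.

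To establish $\max\j(\F) = d$, I would argue by contradiction. Suppose there is an $\F$-jump at some $j^* > d$. Then $j^* \in \j(\F)-\j(\G)$, and Lemma \ref{lem:ordering} gives $e_{j^*} > e_{j^*+1}$. The canonical arrival condition of Proposition \ref{prop:expansion} requires $e_{j^*+1} \in F_{j^*+1} \cap G_{j^*+1}$, and in particular $e_{j^*+1}\in G_{j^*+1}=G_{j^*}$. Now I would exploit Lemma \ref{lem:crossover}: for any $i < j^*+1$ with $e_i<e_{j^*+1}$, we have $e_i\notin G_{j^*+1}$, and dually for $e_i > e_{j^*+1}$, $e_{j^*+1}\notin F_i$. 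Combining these with the explicit description of $F_j$ and $G_j$ given by Proposition \ref{prop:bottomrow}(2) in terms of the accumulated $e_{i+1}$'s (for $\F$) and $e_i$'s (for $\G$), a careful tracking shows that the set $E-\bigcup_{i: e_i>e_{j^*+1}}F_i\cap G_i$ cannot contain $e_{j^*+1}$ as its maximum, contradicting the canonical maximality condition on $\ee$. A dual argument rules out a $\G$-jump at any position $i<d$.

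Once $\max\j(\F)=d=\min\j(\G)$ is in hand, the counts $\abs{\j(\F)}=r-k+1$ and $\abs{\j(\G)}=n-r$ force the $r-k$ simple $\F$-jumps to occupy positions $\{0,1,\ldots,d-1\}$, so $d = r-k$; similarly the $n-r-1$ simple $\G$-jumps occupy $\{d+1,\ldots,n-k-1\}$. The tabular form claimed in the proposition then follows at once. The main obstacle will be the bookkeeping in the middle paragraph: distilling exactly which applications of Lemma \ref{lem:crossover} yield the contradiction requires understanding the interplay between the gap $D_d$, the arrival sequence $\ee$, and the compatibility of the canonically constructed biflats.
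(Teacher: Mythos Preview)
Your outline has the correct architecture: reduce to $\max\j(\F)=d=\min\j(\G)$ and then read off $d=r-k$ from the cardinalities of the jump sets. The gap is in the middle paragraph, where the actual content of the argument lives.

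You propose to contradict the canonical maximality of $e_{j^*+1}$ using only Lemmas \ref{lem:ordering} and \ref{lem:crossover} together with the closure description in Proposition \ref{prop:bottomrow}(2). But that maximality is a \emph{hypothesis}, not a constraint you can violate by bookkeeping on the $e_i$'s alone. The canonical rules tell you which $e_i$ lies in which $F_j\cap G_j$; they do not tell you what \emph{other} elements of $E$ lie in those closures, and it is precisely those extra elements that carry the contradiction. Lemma \ref{lem:crossover} only controls the $e_i$'s relative to one another; it says nothing about the elements of $Y=E-\{e_1,\ldots,e_{n-k-1}\}-D$, which are exactly what the paper exploits.

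The paper's argument is genuinely matroid-theoretic, not a tracking of the arrival sequence. For $\min\j(\G)=d$: if $\G$ had a first jump at some $j-1<d$, then $G_j$ is an $\M^\perp$-hyperplane, so $C_j=E-G_j$ is a circuit of $\M$ contained in $\{e_1,\ldots,e_{j-1}\}\cup Y$; since $\{e_1,\ldots,e_{j-1}\}$ is independent, some $y\in Y$ lies in $C_j\subseteq F_j$. Now the key step uses Proposition \ref{prop:bottomrow}(1): the complement of the basis $B^\perp(x)$ is a basis of $\M$, so $\{e_i:i\in\j(\F)_{\le d}-0\}\cup y$ is independent in $\M$ but sits inside $F_d$, contradicting the rank count from Proposition \ref{prop:bottomrow}(3). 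For $\max\j(\F)=d$: a first $\F$-jump at $j>d$ forces, via the same basis-complement trick, an independent set of size $d+1+|D-x|$ inside $F_{d+1}$, whose rank is $d+1$; hence $|D|=1$, which contradicts Lemma \ref{lem:big unions}. None of these ingredients---the set $Y$, the circuit $E-G_j$, the dual-basis complement, or Lemma \ref{lem:big unions}---appears in your sketch, and without them the contradiction does not materialize.
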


For the remainder of this subsection, we write $Y$ for the set
$ E - \{e_1, \ldots, e_{n-k-1}\} - D$ consisting of
 indices that does not appear in the bottom row of the table of $(\F|\G,\ee)$ when augmented with the entry $\blue{D}$.

\begin{example}\label{ex:resistantterm}
Before proving Proposition \ref{prop:nomixedbiflats}, let us illustrate it using Example \ref{ex:nbc biflag}. The graphical matroid of the cube shown in Figure \ref{fig:cube} has $n+1=12$ elements, rank $r+1=7$, and corank $r^\perp+1=5$. We saw in Examples \ref{ex:nbc biflag} and \ref{ex:nbc region} that, for $k=2$, one of the resistant tables $(\F|\G,\ee)$ in the canonical expansion of $\delta^{n-k-1} = \delta^8$ is the $\nbc$ monomial of the basis $B=\green{01}5\green{6}78b$ with $\IA(B) = \green{016}$, given by the table
\[
\begin{array}{|c|ccccccccc|c|}
\hline
\emptyset & b & 8b & 78b & 578b & \blue{\subsetneq} & E & E & E & E & E\\
E & E & E & E & E & \blue{\supsetneq} & 03469a & 469a & 69a & a & \emptyset\\
\hline
& b & 8 & 7 & 5 & \blue{12} & 3 & 4 & 9 & a & \\
\hline
\end{array}\, .
\]
The double jump occurs at $d=4$ and we have $D=\{1,2\}$ and $Y=\{0,6\}$. For either $x=1$ or $x=2$ 
the flats in $\F$ are the closures of the independent sets $b, 8b, 78b, 578b, x578b$ of $\M$, and the coflats in $\G$ are the coclosures of the independent sets $a, 9a, 49a, 349a, x349a$ of $\M^\perp$. 
%b, b8, b87, b875, b875x$, and the we have that $b875x$ is a basis of $\Tr_2(\M)$ and $a943x$ is a basis of $\M^\perp$.
\end{example}

\begin{proof} [Proof of Proposition \ref{prop:nomixedbiflats}]
%Recall that $d =\j(\F) \cap \j(\G)$ is the location of the double jump. %First we prove that $\min \j(\G) = d$ and $\max \j(\F) = d$.

%\medskip 

We first show that $\min \j(\G) = d$. 
%\smallskip 
Suppose otherwise that $\G$ has jumps before $d$, and let $j-1<d$ be the position
at which the first one occurs.
Then $\set{0, 1, \ldots, j-2} \in \j(\F)$, and the table of $(\F|\G,\ee)$ reads
\[
\small  % experiment with this?
\hspace{-.6cm}
\begin{array}{|ccccccccccccccccc|}
\hline
F_1  & \subsetneq & \cdots & \subsetneq & F_{j-1} & = &  F_j & \subseteq & \cdots & \subseteq & F_d & \blue{\subsetneq} & F_{d+1} &  \subseteq & \cdots & \subseteq  & F_{n-k-1}
\\
E & = & \cdots & = & E & \supsetneq & G_j & \supseteq & \cdots & \supseteq  & G_d & \blue{\supsetneq} & G_{d+1} & \supseteq & \cdots & \supsetneq  & G_{n-k-1} 
\\
\hline
e_1  &  & \cdots &  & e_{j-1} &  & e_j & & \cdots  & &   e_{d} & \blue{D}& e_{d+1}& & \cdots & & e_{n-k-1}
\\
\hline
\end{array}\, .
\]
Proposition \ref{prop:bottomrow} guarantees that $\set{e_1, \ldots, e_{j-1}}$ is independent and spans the flat $F_{j-1}=F_j$ of $\M$. Notice that $F_j \neq E$, since $j<d$. 
Now, Lemma \ref{lem:full flags} tells us that $\G$ is a complete flag of $\M^\perp$ with possible repetitions, so the coflat $G_j$ must be a hyperplane in $\M^\perp$, and hence  $C_j = E-G_j$ is a circuit of $\M$. But we have that $G_j \supseteq \{e_j, e_{j+1}, \ldots, e_{n-k-1}\} \cup D$, which implies that  $C_j = E-G_j  \subseteq \set{e_1, \ldots, e_{j-1}} \cup Y$. But $\set{e_1, \ldots, e_{j-1}}$ is independent in $\M$, so there must be an element $y \in Y$ such that $y \in C_j$. Then $y \notin G_j$, so $y \in F_j$. 

Recall from Proposition \ref{prop:bottomrow} that $B^\perp(x)\coloneq \set{e_i \mid i \in \j(\G) - d} \cup x$ is a basis of $\M^\perp$, so its complement
%\begin{equation}\label{eq:basisofM}
$\B^\perp(x)' \coloneq \set{e_i \mid i \in \j(\F)-0} \cup (D - x) \cup Y$  is a basis of $\M$,
%\end{equation}
and hence 
\[
\text{$I\coloneq \set{e_i \mid  i \in \j(\F)_{\leq d}-0} \cup y$ is independent in $\M$.} 
\]
Notice that $I$ is a subset of $F_d$ and 
$\abs{I} = \abs{\j(\F)_{\leq d}-0}+1  =  \abs{\j(\F)_{<d}}+1$;
this contradicts the third part of Proposition \ref{prop:bottomrow}, which says that  $\rank(F_d) = \abs{\j(\F)_{<d}}$.
We conclude that $\G$ has no jumps before $d$, that is, $G_1 = \cdots = G_d= E$.  

%\medskip 

We next show that $\max \j(\F) = d$.
%\smallskip 
Suppose $\F$ has jumps after $d$, and let $j>d$ be the position at which the first such jump occurs.  
Since $\j(\F)\cup\j(\G)=\set{0,1,\ldots,n-k-1}$ and we previously showed that $0, 1, \ldots, d-1 \notin \j(\G)$, we must have $\j(\F)\supseteq\set{0,1,\ldots,d-1}$. Therefore, the table of $(\F|\G,\ee)$ reads
\[
\small % make these fit?
\hspace{-.5cm}
\begin{array}{|ccccccccccccccccc|}
\hline
F_1  & \subsetneq & \cdots & \subsetneq & F_d & \blue{\subsetneq} &  F_{d+1} & = & \cdots & = & F_j & \subsetneq & F_{j+1} &  \subseteq & \cdots & \subsetneq  & F_{n-k-1}
\\
E & = & \cdots & = & E & \blue{\supsetneq} & G_{d+1} & \supsetneq & \cdots & \supsetneq  & G_j & = & G_{j+1} & \supseteq & \cdots & \supsetneq  & G_{n-k-1} 
\\
\hline
e_1  &  & \cdots &  & e_d &   \blue{D}  & e_{d+1} & & \cdots  & &   e_i & & e_{i+1}& & \cdots & & e_{n-k-1}
\\
\hline
\end{array}\, .
\]
This implies that the basis $B^\perp(x)'$ of $\M$ %shown in (\ref{eq:basisofM}) 
contains $J\coloneq \set{e_1, \ldots, e_d, e_j} \cup (D-x)$, which must then be independent.  But $J \subseteq F_j = F_{d+1}$,
and $\rank(F_{d+1}) = d+1$ because $\F$ is a complete flag of $\Tr_k(\M)$, with possible repetitions,  
by Lemma \ref{lem:full flags}, so
\[
d+1 \geq \rank(\abs{J}) = \abs{J} = d+1+\abs{D-x}.
\]
which implies $D=\set{x}$. It follows that $F_d \cup G_{d+1} = E-x$, contradicting Lemma \ref{lem:big unions}. 
It follows that $\F$ has no jumps after $d$, that is, $F_{d+1} = \cdots = F_{n-k-1} = E$. 
%\medskip 
We conclude that $\j(\F)=\{0, 1, \ldots, d\}$ and $\j(\G)=\{d, d+1, \ldots, n-k\}$. The first part of Proposition \ref{prop:bottomrow}  then implies that $d = r-k$. 

%\medskip 

The above discussion shows that $x_{\F|\G}$ has no mixed biflats. Furthermore, Lemma \ref{lem:ordering} tells us that 
$e_1 > \cdots > e_d$ and $e_{d+1} <  \cdots < e_{n-k-1}$. This completes the proof.
\end{proof}

%
%\begin{example}\label{ex:no mixed pairs} \red{(Replace with our running example?)}
%Again we refer to the matroid from Figure~\ref{fig:cube}, which has $n+1 = 12$ elements, rank $r+1 = 7$, and corank $r^\perp+1=5$. For $k=1$, the canonical expansion of $\delta^{n-k-1} = \delta^9$ gives rise to the
%following table $(\F|\G,\ee)$:
%\[
%\begin{array}{|c|ccccc|c|cccc|c|}
%\hline 
%\emptyset & b & ab & 89ab & 789ab & 5789ab &  \blue{\subsetneq} & E & E & E & E & E\\
%E & E & E & E & E & E & \blue{\supsetneq} & 03469a & 469a & 69a & 9 & \emptyset\\ \hline
%& b & a & 8 & 7 & 5  & \blue{12} & 3 & 4 & 6 & 9 & \\ \hline
%\end{array}
%\]
%The double jump occurs at $d=5$, and we have $D=\set{1,2}$ and $Y=\set{0}$. For both $x=1$ and $x=2$, we have that $\set{b,a,8,7,5,x}$ 
%and $\set{x,3,4,6,9}$ are bases for $\M$ and $\M^\perp$, respectively.  We will see that this term resists multiplication by $\a^k=\a^1$.
%\end{example}
%
We now strengthen Proposition \ref{prop:bottomrow} by showing that a resistant table $(\F|\G,\ee)$ in the canonical expansion of $\delta^{n-k-1}$ is completely determined by the arrival sequence $\ee$.

\begin{corollary}\label{cor:bottomrow2}
If $(\F|\G,\ee)$ be a table arising in the canonical expansion of $\delta^{n-k-1}$ such that  $x_{\F|\G}$  resists multiplication by $\gamma^k$, then %Let $d = \j(\F) \cap \j(\G)$ be the location of the double jump and $D = E-(F_d \cup G_{d+1}) =  (F_{d+1}-F_d) \cap (G_d - G_{d+1})$. 
%A resistant table $(\F|\G,\ee)$ in the canonical expansion of $\delta^{n-k-1}$ is completely determined by $\ee$. More precisely,
\[
F_j | G_j = \begin{cases}
\cl\{e_1, \ldots, e_j\} | E & \textrm{ if } j \leq r-k, \\
E | \cl^\perp\{e_j, \ldots, e_{n-k+1}\}  & \textrm{ if } j > r-k. 
\end{cases} 
%\qquad  
%G_j = \begin{cases}
%E & \textrm{ if } j \leq r-k \\
%\cl^\perp\{e_j, \ldots, e_{n-k+1}\} & \textrm{ if } j > r-k 
%\end{cases}
\]
In particular, the biflag $\F|\G$ is determined uniquely by the arrival sequence $\ee$.
\end{corollary}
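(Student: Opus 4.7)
The plan is to combine Propositions \ref{prop:nomixedbiflats} and \ref{prop:bottomrow}: the first pins down the shape of the table and, in particular, the jump sets $\j(\F)$ and $\j(\G)$; the second gives explicit closure formulas for $F_j$ and $G_j$ in terms of $\ee$, $d$, and the gap $D$. Once the jump sets are known, these closure formulas collapse to the asserted expressions, and the apparent dependence on the choice of $x \in D$ disappears.

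Concretely, I would first apply Proposition \ref{prop:nomixedbiflats} to conclude that the unique double jump occurs at $d = r-k$ and that $x_{\F|\G}$ has no mixed biflats. Combined with Lemma \ref{lem:ordering}, this forces
\[
\j(\F) = \{0,1,\ldots,r-k\} \quad \text{and} \quad \j(\G) = \{r-k,r-k+1,\ldots,n-k-1\},
\]
and makes it clear from the shape of the table that $G_j = E$ for $j \le r-k$ and $F_j = E$ for $j > r-k$.

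Next, I would substitute these jump sets into the formulas of Proposition \ref{prop:bottomrow}(2). For $j \le r-k$, the index set $\{i \in \j(\F) - d \mid i+1 \le j\}$ becomes $\{0,1,\ldots,j-1\}$, so
\[
F_j \;=\; \cl\bigl(\{e_{i+1} \mid 0 \le i \le j-1\}\bigr) \;=\; \cl\{e_1,\ldots,e_j\},
\]
with no reference to $x \in D$. Dually, for $j > r-k$, the index set $\{i \in \j(\G)-d \mid i \ge j\}$ becomes $\{j,j+1,\ldots,n-k-1\}$, giving
\[
G_j \;=\; \cl^\perp\{e_j,e_{j+1},\ldots,e_{n-k-1}\},
\]
again independent of $x \in D$. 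This is exactly the formula in the statement.

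The only thing left to observe is the uniqueness claim: since $\Des(\ee)$ and $\Asc(\ee)$ are determined by $\ee$ alone, Lemma \ref{lem:ordering} shows that the jump sets, and hence the index $d = r-k$, are recoverable from $\ee$; and the explicit formulas above then recover $\F$ and $\G$ from $\ee$ with no further data. I do not anticipate a real obstacle here beyond bookkeeping: the content lives in Propositions \ref{prop:nomixedbiflats} and \ref{prop:bottomrow}, and the corollary is essentially their combination specialized to the jump sets forced by resistance.
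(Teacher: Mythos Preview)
Your proposal is correct and follows essentially the same approach as the paper: invoke Proposition \ref{prop:nomixedbiflats} to pin down $d=r-k$ and the jump sets, then plug these into Proposition \ref{prop:bottomrow}(2) so that the closure formulas collapse and the dependence on $x\in D$ drops out. The paper's proof is more terse but identical in content; your added remark that $\ee$ alone recovers the jump sets (via Lemma \ref{lem:ordering}) and hence $\F|\G$ is exactly the uniqueness argument needed.
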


\begin{proof}
This is a direct consequence of the second part of Proposition \ref{prop:bottomrow}, since $\j(\F)=\{0, 1, \ldots, r-k\}$ and $\j(\G)=\{r-k, r-k+1, \ldots, n-k\}$.
Note that $\ee$ also determines the nonempty gap $D = E-(F_{r-k} \cup G_{r-k+1})$.
\end{proof}

%\begin{remark}
Although the resistant monomials in $\delta^{n-k-1}$ do not contain mixed biflats, note however that the multiplication by $\gamma^k$ may introduce mixed biflats in the canonical expansion of $\gamma^k\delta^{n-k-1}$, as we saw in Section \ref{sec:nbcmonomials}.
%\end{remark}

%\subsection{Some preliminaries on Tutte activities.}

\subsection{The resistant terms of $\delta^{n-k-1}$ are $\nbc$ monomials.}

In order to identify the resistant terms in the canonical expansion of $\delta^{n-k-1}$, we need to recall a few fundamental facts
from the theory of basis activities of a matroid $\M$ on a 
linearly ordered ground set, as
developed by Tutte \cite{Tutte67} and Crapo \cite{Cr69}.
%A central result is \emph{Crapo's decomposition}
%\begin{equation}\label{eq:Dawson}
%2^E = \bigsqcup_{B \textrm{ basis}} [B-\IA(B), B \cup \EA(B)]
%\end{equation}
%of the Boolean poset $2^E$ of subsets of $E$ into Boolean subintervals corresponding 
%to the bases of $\M$. 
%There is a definition of internal and external activities for any set $S \subseteq E$.

\begin{definition}\label{def:IA(S)}
For a subset $S \subseteq E$ of the ground set of  $\M$, we set
%\begin{eqnarray*}
\begin{align*}
\IA(S) &\coloneq  \set{e\in S\mid \text{there exists a cocircuit $C^\perp \subseteq (E-S) \cup e$ 
for which $e=\min C^\perp$}}, \\
\EA(S) &\coloneq  \set{e\in E-S\mid \text{there exists a circuit $C \subseteq S \cup e$ 
for which $e=\min C$}}. 
%\end{eqnarray*}
\end{align*}
\end{definition}

The following set will play a very important role.

\begin{propdef}\label{def:P_M(S)}
For  an independent set $S \subseteq E$ of  $\M$, let
\begin{align*}
\P(S) &\coloneq  \set{e\in E-S\mid \text{there exists a cocircuit $C^\perp \subseteq E-S$ 
for which $e=\min C^\perp$}} \\
&= \set{e\in E-S\mid \text{there exists a cocircuit $C^\perp \subseteq E-\cl(S)$ 
for which $e=\min C^\perp$}} \\
&= \text{lexicographically smallest set such that $S \sqcup \P(S)$ is a basis}.
%\Q(S) &=& \set{e\in S\colon \text{there exists a circuit $C \subseteq S$ for which $e=\min C$}} 
\end{align*}
\end{propdef}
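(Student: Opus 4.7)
The plan is to show the three descriptions of $\P(S)$ coincide, proving the cocircuit-level equivalence directly and then reducing the agreement with the greedy description to a standard fact about matroid greedy algorithms.

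For the equivalence between the two cocircuit formulations, the containment $E - \cl(S) \subseteq E - S$ is immediate, so the second set is contained in the first. Conversely, any cocircuit $C^\perp$ of $\M$ with $C^\perp \subseteq E - S$ has complement $E - C^\perp$ equal to a hyperplane of $\M$ containing $S$, hence containing $\cl(S)$; this forces $C^\perp \subseteq E - \cl(S)$.

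For the agreement with the greedy description, let $\M' = \M/S$, a matroid on $E - S$ of rank $r+1 - |S|$ with loops precisely on $\cl(S) - S$. The bases of $\M'$ are the sets $T \subseteq E - S$ with $S \sqcup T$ a basis of $\M$, so the lex-smallest such $T$ is the lex-smallest basis of $\M'$; and since cocircuits contain no loops, the cocircuits of $\M'$ are exactly the cocircuits of $\M$ contained in $E - \cl(S)$. The remaining equivalence thus reduces to the following general fact: \emph{for any matroid on an ordered ground set $U$, the lex-smallest basis equals} $\set{e \in U \mid e = \min C^\perp \text{ for some cocircuit } C^\perp}$.

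To prove this, I would argue both inclusions via fundamental exchanges. For $\supseteq$, fix a cocircuit $C^\perp$ with $e = \min C^\perp$ and let $T$ denote the lex-smallest basis; supposing $e \notin T$, consider the fundamental circuit $C(T,e) \subseteq T \cup \set{e}$. Circuit--cocircuit orthogonality forces $|C(T,e) \cap C^\perp| \neq 1$, and since $e$ lies in both sets there must exist another element $f \in C(T,e) \cap C^\perp$; then $f \in T$ and $f > e$, and $(T - f) \cup \set{e}$ is a basis lex-smaller than $T$, a contradiction. For $\subseteq$, given $e \in T$, the fundamental cocircuit $C^\perp(T,e) \subseteq (U - T) \cup \set{e}$ contains $e$; if its minimum were some $f < e$, then $(T - e) \cup \set{f}$ would be a lex-smaller basis, again a contradiction. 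The argument is primarily organizational, and the main obstacle is simply marshalling the exchange axioms carefully; the possible loops of $\M/S$ create no issue, since loops are in neither bases nor cocircuits.
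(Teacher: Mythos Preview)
Your argument is correct. Both the hyperplane argument for the equivalence of the two cocircuit descriptions and the reduction via $\M/S$ to the characterization of the lex-smallest basis as the set of cocircuit minima are sound; the exchange arguments for the latter are standard and handled cleanly.

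Your route differs from the paper's in two respects. For the equivalence of the two cocircuit formulations, the paper argues by circuit--cocircuit orthogonality: if some $f\in C^\perp\cap\cl(S)$, there is a circuit $C\subseteq S\cup f$ through $f$, giving $C\cap C^\perp=\{f\}$, which is impossible (they phrase this via Lemma~\ref{lem:big unions}). Your hyperplane argument---$E-C^\perp$ is a flat containing $S$, hence $\cl(S)$---is shorter and avoids that lemma. For the equivalence with the greedy description, the paper simply cites Crapo~\cite{Cr69}; you instead supply a self-contained proof by contracting $S$ and characterizing the lex-smallest basis of $\M/S$ via fundamental (co)circuits. What your approach buys is that the propdef becomes independent of the external reference; what the paper's approach buys is brevity.
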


%Note that if $S$ is independent then $\Q(S) = \emptyset$.

%\begin{lemma}\label{lem:P_M(S)}
%We have the following alternative descriptions of $\P(S)$:
%\begin{eqnarray*}
%\P(S) &=& \set{e\in E-S\colon \text{there exists a cocircuit $C^\perp \subseteq E-\cl(S)$ 
%for which $e=\min C^\perp$}} \\
%&=& \text{the lexicographically smallest set such that $S \sqcup \P(S)$ is spanning}
%\end{eqnarray*}
%\end{lemma}

\begin{proof}
The equivalence of the first and third definitions is shown in \cite{Cr69}. To show the equivalence of the first two, suppose $C^\perp \subseteq E-S$ is a cocircuit with $e=\min C^\perp$. 
If we had an element $f\in C^\perp\cap \cl(S)$, there would be a circuit $C\subseteq S\cup f$
containing $f$.  But then we would have $C\cap C^\perp=\set{f}$, which is impossible by Lemma \ref{lem:big unions}, so
$C^\perp\subseteq E-\cl(S)$. This proves one inclusion, and the reverse inclusion is trivial.
\end{proof}

The next proposition shows that there is a close relationship between $S$ and the lexicographically smallest basis $B$ containing it \cite[Section 2]{LasVergnas13}.

\begin{proposition}\label{prop:basis_decomp}
The following holds for any independent subset $S\subseteq E$ of  $\M$.
\begin{enumerate}[(1)]\itemsep 5pt
\item $B\coloneq S\cup \P(S)$ is the lexicographically smallest basis of $\M$ containing $S$.
%\item The interval $[B-\IA(B), B \cup \EA(B)]$ contains $S$  in Crapo's decomposition \eqref{eq:C^\perpawson}.
\label{prop:basis(1)}
\item $\IA(B)=\IA(S)\cup \P(S)$.
\label{prop:basis(2)}
\item $\EA(B)=\EA(S)$.
%\label{lem:basis(2)}
%\item If $\EA(S)=\emptyset$ then $\EA(B)=\emptyset$.
%\label{lem:basis(3)}
\end{enumerate}
\end{proposition}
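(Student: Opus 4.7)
The plan is to prove the three parts in order, letting Parts (1) and (3) set up the more delicate Part (2). Part (1) is essentially a restatement of the third characterization of $\P(S)$ in Proposition/Definition \ref{def:P_M(S)}: that characterization already identifies $\P(S)$ as the lex-smallest set making $S \sqcup \P(S)$ a basis, and a basis $B'$ containing $S$ is lex-smaller than $B = S \sqcup \P(S)$ iff $B' - S$ is lex-smaller than $\P(S)$, so I would simply record this equivalence.

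For Part (3), I would argue both inclusions by basis exchange. If $e \in \EA(S)$, then a circuit $C \subseteq S \cup e$ with $\min C = e$ places $e$ in $\cl(S)$; but $\P(S) \cap \cl(S) = \emptyset$ since $S \sqcup \P(S)$ is independent, so $e \in E - B$. Since $C \subseteq B \cup e$, by uniqueness of the fundamental circuit $C = C(B,e)$, so $e \in \EA(B)$. Conversely, suppose $e \in \EA(B)$ and some $f \in C(B,e) \cap \P(S)$; then $f > e$, and basis exchange yields the basis $(B - f) \cup e$, which still contains $S$ (because neither $e$ nor $f$ lies in $S$) and is lexicographically smaller than $B$, contradicting Part (1). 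Hence $C(B,e) \subseteq S \cup e$ witnesses $e \in \EA(S)$.

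Part (2) splits into three inclusions. The containment $\P(S) \subseteq \IA(B)$ follows from the same basis-exchange idea: if $e \in \P(S)$ and some $f < e$ lay in $C^\perp(B,e)$, then $(B - e) \cup f$ would still contain $S$ (because $e \in \P(S) \not\subseteq S$) and be lex-smaller than $B$, contradicting Part (1). The containment $\IA(B) \cap S \subseteq \IA(S)$ is immediate since $C^\perp(B,e) \subseteq (E - B) \cup e \subseteq (E - S) \cup e$. The main obstacle is $\IA(S) \subseteq \IA(B)$: a witnessing cocircuit $C^\perp \subseteq (E - S) \cup e$ for $e \in \IA(S)$ with $\min C^\perp = e$ may contain elements of $\P(S)$ that keep it from being $C^\perp(B,e)$. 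I would handle this by induction on $|\P(S)|$. The base case $\P(S) = \emptyset$ gives $B = S$, where the two definitions of internal activity coincide tautologically. For the inductive step, set $p = \min \P(S)$ and $S' = S \cup p$; Part (1) applied to $S'$ yields $\P(S') = \P(S) - p$ (any smaller extension of $S'$ to a basis would contradict the minimality of $B$ among bases containing $S$), and I would show $\IA(S') = \IA(S) \cup \{p\}$. The element $p$ lies in $\IA(S')$ by the defining cocircuit for $p \in \P(S)$. For any $e \in \IA(S) \cap S$ whose witnessing cocircuit $C^\perp$ contains $p$ (necessarily $e < p$), I would eliminate $p$ against the cocircuit $C^\perp_p \subseteq E - S$ with $\min C^\perp_p = p$ supplied by $p \in \P(S)$; since every element of $C^\perp$ and of $C^\perp_p$ is $\geq e$, the resulting cocircuit lies in $(E - S') \cup e$ and still has minimum $e$, placing $e$ in $\IA(S')$. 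The reverse inclusion $\IA(S') \cap S \subseteq \IA(S)$ is trivial because $(E - S') \cup e \subseteq (E - S) \cup e$. Applying the inductive hypothesis to $S'$ then gives $\IA(B) = \IA(S') \cup \P(S') = (\IA(S) \cup \{p\}) \cup (\P(S) - p) = \IA(S) \cup \P(S)$.
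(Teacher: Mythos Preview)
The paper does not supply its own proof of this proposition; it simply attributes the result to Las Vergnas \cite[Section 2]{LasVergnas13} and moves on. Your argument is correct and self-contained. Part (1) is indeed immediate from the third characterization in Proposition/Definition \ref{def:P_M(S)}, and your basis-exchange arguments for Part (3) and for the inclusion $\P(S)\subseteq\IA(B)$ are the natural ones. The inductive step for $\IA(S)\subseteq\IA(B)$ is also sound; just note that when you eliminate $p$ from the two cocircuits $C^\perp$ and $C^\perp_p$ you are invoking \emph{strong} cocircuit elimination (to guarantee that the resulting cocircuit contains $e$), which holds in any matroid by duality with strong circuit elimination. With that made explicit, the chain $\min C^\perp_* = e$ and $C^\perp_*\subseteq (E-S')\cup e$ follows exactly as you wrote.
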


In the upcoming arguments, the reader may find it useful to consult Figure \ref{fig:activities}, which summarizes Proposition \ref{prop:basis_decomp}.

\begin{figure}[h]
  \centering
  \begin{tikzpicture}
    \def\dy{0.6}
    \draw (0.5,0) rectangle node{$E$} (9,0.5);
    \draw (2,\dy) rectangle node{$\IA(B)$} (4.5,0.5+\dy);
    \draw (5.5,\dy) rectangle node{$\EA(B)$} (7.5,0.5+\dy);
    \draw (2,2*\dy) rectangle node{$B$} (5.5,0.5+2*\dy);  
    \draw (2,3*\dy) rectangle node{$\P(S)$} (3.5,0.5+3*\dy);
    \draw (3.5,3*\dy) rectangle node{$S$} (5.5,0.5+3*\dy);    
    \draw (3.5,4*\dy) rectangle node{$\IA(S)$} (4.5,0.5+4*\dy);
    \draw (5.5,4*\dy) rectangle node{$\EA(S)$} (7.5,0.5+4*\dy);
  \end{tikzpicture}
%  
%\centering
%\includegraphics[height=5cm]{}
\caption{Activities for an independent $S$ and its greedy completion to a basis $B$.\label{fig:activities}}
\end{figure}

\begin{example}\label{ex:basis_decomp}
  For the graphical matroid of the pyramid in Figure~\ref{fig:pyramid} and
  $S=15$, we have $\P(S)=23$ and $B=01234$.  Then
$\IA(B) = 23, \IA(S)=\emptyset,$ and $
\EA(B) = \EA(S) = 0.$
%  $S=\set{1,5}$, we have $\P(S)=\set{2,3}$ and $B=\set{0,1,2,3,4}$.  Then
%$
%\IA(B) = \set{2,3},\quad \IA(S)=\emptyset,  \quad \text{and}\quad 
%\EA(B) = \EA(S) = \set{0}.
%\]
\end{example}

We will make use of the following fact.

\begin{lemma}\label{lem:big unions II}
Let $S$ be an independent set of $\M$, and let $F=\cl(S)$.  If $G$
is a hyperplane of $\M^\perp$ satisfying $F\cup G\neq E$, then 
$F\cup G\cup \P(S)\neq E$ as well.
\end{lemma}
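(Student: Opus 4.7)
The plan is to exhibit an explicit element of $E - (F \cup G \cup \P(S))$. First, if $\P(S) \subseteq G$, then $F \cup G \cup \P(S) = F \cup G$, which is a proper subset of $E$ by hypothesis, and there is nothing to prove. So I will assume $\P(S) \not\subseteq G$ and set $p = \min(\P(S) - G)$.

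To produce the witness I work with the basis $B = S \sqcup \P(S)$. By part $(2)$ of Proposition \ref{prop:basis_decomp}, $p$ is internally active in $B$, so the fundamental cocircuit $D = C^\perp(B, p)$ of $\M$ is contained in $B^\perp \cup p$ and has $\min D = p$. In particular $D \subseteq E - S$, so the equivalence of the first two characterizations in Propdef \ref{def:P_M(S)} forces $D \subseteq E - F$.

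The heart of the argument is a change of dual viewpoint. The set $D$ is a circuit of $\M^\perp$, while $E - G$ is a cocircuit of $\M^\perp$ because $G$ is a hyperplane of $\M^\perp$. Since $p \in D \cap (E - G)$, this intersection of a circuit and a cocircuit of $\M^\perp$ is nonempty, so by the standard orthogonality property it must contain at least one further element $p' \neq p$. Then $p' \in E - G$ gives $p' \notin G$, $p' \in D \subseteq E - F$ gives $p' \notin F$, and $p' \in D - p \subseteq B^\perp$ gives $p' \notin B$ and hence $p' \notin \P(S)$. Thus $p' \in E - (F \cup G \cup \P(S))$, which is what we want.

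The main obstacle is conceptual rather than computational: one has to keep track of two dual roles of $D$, reading it as an $\M$-cocircuit in order to pin down $\min D = p$ and to place $D$ inside $E - F$, but reading it as an $\M^\perp$-circuit in order to intersect orthogonally with the $\M^\perp$-cocircuit $E - G$. Once these viewpoints are aligned, the conclusion follows from orthogonality together with the descriptions of $\P(S)$ supplied by Propdef \ref{def:P_M(S)} and Proposition \ref{prop:basis_decomp}.
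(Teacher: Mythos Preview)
Your proof is correct and uses the same core ingredients as the paper's proof --- the fundamental cocircuits $C^\perp(B,p)$ for $p\in\P(S)\subseteq\IA(B)$, the fact that such a cocircuit disjoint from $S$ is automatically disjoint from $\cl(S)$, and circuit--cocircuit orthogonality --- but your setup is more economical.  The paper first proves the auxiliary identity $F=\bigcap_{p\in\P(S)}\big(E-C^\perp(B,p)\big)$ via a rank count and then decomposes $E-(F\cup G)=\bigcup_{p}(C^\perp(B,p)\cap C)$, where $C=E-G$ is a circuit of $\M$; nonemptiness of the left side singles out some $p$, and orthogonality in $\M$ finishes.  You instead dispose of the case $\P(S)\subseteq G$ trivially and, when some $p\in\P(S)-G$ exists, apply orthogonality in $\M^\perp$ directly to $C^\perp(B,p)$ and $E-G$, bypassing the intersection formula entirely.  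One minor remark: you never use the minimality of $p$ in $\P(S)-G$; any element of that set would do.
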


\begin{proof}
Proposition~\ref{prop:basis_decomp} tells us that $B\coloneq S\cup \P(S)$ is a basis of $\M$ 
and $\P(S) \subseteq \IA(B)$. 
Thus, for each element $p\in \P(S)$, we can find a unique cocircuit $C^\perp(p) \subseteq (E-B) \cup p$ of $\M$ for which $p=\min C^\perp_p$. Notice that $C^\perp(p)\cap \P(S)=\set{p}$. 
Set $H(p)\coloneq E-C^\perp(p)$, a hyperplane of $\M$.

We claim $F=\bigcap_{p \in \P(S)}H(p)$.  Since each $C^\perp(p)$ is contained in $E-S$ we have $S\subseteq \bigcap_{p\in \P(S)}H(p)$.
Since the intersection of flats is a flat, $F = \cl(S) \subseteq \bigcap_{p \in \P(S)} H(p)$ as well.
%In particular we have the inequality $\rank (\bigcap_{p \in \P(S)}H_p) \geq \rank(F) = \abs{S}$.
On the other hand, each $H(p)$ is a hyperplane, so the submodular inequality
gives
\[
\rank \big( \bigcap_{p\in \P(S)}H(p)\big)  \leq \rank(\M) -\abs{\P(S)} = \abs{S} = \rank(F)
\]
which proves the claim.

Now, the complement $C\coloneq E-G$ is a circuit of $\M$.  Let $D=E-(F\cup G) \neq \emptyset$.  Then
\[
D =(E-F)\cap (E-G)
= \big(\bigcup_{p\in \P(S)}C^\perp(p)\big)\cap C
= \bigcup _{p\in \P(S)}(C^\perp(p)\cap C).
\]
Since $D$ is nonempty, $C^\perp(p) \cap C\neq\emptyset$ for some $p$.  Since $C^\perp(p)$
is a cocircuit and $C$ is a circuit, $C^\perp(p) \cap C$ -- and hence $D$ -- contains at least
one element $q \neq p$.  Since ${C^\perp(p) \cap \P(S)}=\set{p}$, it follows that $q \notin \P(S)$ so 
$q \notin F \cup G \cup \P(S)$, as desired.
\end{proof}

%The main result of this section describes the expansion of $\delta^{n-k-1}$
%in terms of $\nbc$ bases with prescribed numbers of internally active elements.
%The $k=0$ case appears above as Theorem~\ref{thm:delta expansion}.  
%

In Proposition \ref{prop:nomixedbiflats}, we showed that the terms of the canonical expansion of $\delta^{n-k-1}$ that resist multiplication by $\gamma^k$ are given by tables $(\F|\G,\ee)$ of the form 
%\begin{equation}\tag{$*$}\label{eq:finalarray}
\[
\begin{array}{|cc|ccccccccccc|cc|}
\hline
\emptyset & \subsetneq &
F_1  & \subsetneq & \cdots & \subsetneq & F_{r-k} & \blue{\subsetneq} & E &  = & \cdots & =  & E
& = & E \\
E & = &
E & = & \cdots & = & E & \blue{\supsetneq} & G_{r-k+1} & \supsetneq & \cdots & \supsetneq  & G_{n-k-1}
& \supsetneq & \emptyset \\
\hline
&&
e_1  & > & \cdots &> & e_{r-k} & \blue{D}& e_{r-k+1}&<& \cdots &<& e_{n-k-1}
&& 
\\
\hline
\end{array} \, . 
\]
%\end{equation}
The following proposition describes precisely which tables arise.

\begin{proposition}\label{prop:delta expansion II}
%Suppose that $(\F|\G,\ee)$ gives rise to a term $x_{\F|\G}$ in the canonical expansion of $\delta^{n-k-1}$ that resists multiplication by $\gamma^k$. 
Let
 $S$ be the independent set $\set{e_1 > \cdots > e_{r-k}}$ of $\M$, and let $B$ be the basis $B = S \sqcup \P(S)$ of $\M$ in Proposition \ref{prop:basis_decomp}.
%Then
\begin{enumerate}[(1)]\itemsep 5pt
\item \label{enum:delta 0}
$\IA(S) = \EA(S) = \emptyset$.
\item 
$\EA(B) = \emptyset$, so $B$ is a $\nbc$ basis of $\M$. \label{enum:delta 1}
\item
$\IA(B)=\P(S)$ and $|\IA(B)|=k+1$.\label{enum:delta 2}
\item $e_i=\min F_i$ for $1\leq i\leq r-k$. \label{enum:delta 3}
\item $e_i=\min (G_i-\P(S))$ for $r-k+1\leq i\leq n-k-1$. \label{enum:delta 4}
\item
$\F|\G = {\F(B)| \G(B)}$
and $x_{\F|\G}$ is the $\nbc$ monomial of the $\nbc$ basis $B$, as  in Proposition \ref{def:nbc biflags}. \label{enum:delta 5}
\end{enumerate}
\end{proposition}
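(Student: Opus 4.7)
The six claims are tightly interlocked; I prove them in an order that threads through the combinatorial content of the table.

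I begin with item~(4). Suppose for contradiction that $e<e_i$ and $e\in F_i$ with $1\leq i\leq r-k$. Since $F_i\subseteq F_{r-k}$ and $D\cap F_{r-k}=\emptyset$, the element $e$ is either an arrival $e_j$ or lies in the hidden set $Y=E-\{e_1,\ldots,e_{n-k-1}\}-D$. If $e=e_j$ is an arrival, a case analysis on whether $j$ lies in the descent ($j\leq r-k$) or ascent ($j>r-k$) part gives a direct contradiction via the arrival condition of Proposition~\ref{prop:expansion} combined with Corollary~\ref{cor:bottomrow2}: in the descent case one has $j>i$ and the arrival condition forces $e_j\notin F_{j-1}\supseteq F_i$; in the ascent case, letting $i^*$ be the largest descent index with $e_{i^*}>e_j$, we have $i^*\geq i$ and $e_j\notin F_{i^*}\supseteq F_i$. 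The remaining case $e\in Y\cap F_i$ must be ruled out by the resistance hypothesis $x_{\F|\G}\gamma^k\neq 0$: such an $e$ would open up a second valid insertion of biflats at the double jump, conflicting with the uniqueness of the surviving extended term in the product, a uniqueness that mirrors the analysis in the proof of Proposition~\ref{prop:nbc is resistant}.

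From~(4), $\EA(S)=\emptyset$ follows cleanly: if $e\in\EA(S)$ with circuit $C\subseteq S\cup e$ and $e=\min C$, then for $i_p=\max\{i:e_i\in C\cap S\}$ we have $e\in\cl(C-e)\subseteq F_{i_p}$ while $e<e_{i_p}$, contradicting~(4). The dual half $\IA(S)=\emptyset$ is established symmetrically, by applying the same arrival-condition analysis to the reversed table (which is a resistant configuration for $\M^\perp$ with the ascent part $T'=\{e_{r-k+1},\ldots,e_{n-k-1}\}$ playing the role of $S$), and then invoking Lemma~\ref{lem:big unions II} to translate the resulting statement about $\cl^\perp$-closures on the coflat side into the asserted form. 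The same symmetric argument delivers item~(5), the minimum being taken modulo $\P(S)$ once $\IA(S)=\emptyset$ is in hand.

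Items~(2) and~(3) then follow directly from Proposition~\ref{prop:basis_decomp}: $\EA(B)=\EA(S)=\emptyset$ makes $B=S\sqcup\P(S)$ an $\nbc$ basis, and $\IA(B)=\IA(S)\cup\P(S)=\P(S)$ has $|\IA(B)|=|\P(S)|=(r+1)-(r-k)=k+1$. For item~(6), I compare Corollary~\ref{cor:bottomrow2}'s description of $\F|\G$ with Proposition~\ref{def:nbc biflags}'s description of $\F(B)|\G(B)$: the identifications $B-\IA(B)=S$ and $B^\perp-\min B^\perp=\{e_{r-k+1},\ldots,e_{n-k-1}\}$ (the second using~(5) and the matching cardinalities $|B^\perp|-1=n-r-1=(n-k-1)-(r-k)$) show that the arrival sequences $\ee$ and $\ee(B)$ coincide, hence so do the biflags.

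\textbf{Main obstacle.}
The sharp technical point is ruling out the possibility $e\in Y\cap F_i$ with $e<e_i$ in Step~1, together with its dual on the coflat side in Step~2. The arrival condition of Proposition~\ref{prop:expansion} constrains only elements strictly greater than each $e_i$, so the ``hidden'' elements of $Y$ are not directly ruled out and require a genuine use of the resistance hypothesis $x_{\F|\G}\gamma^k\neq 0$, extracted via the unique-insertion mechanism at the double jump that underlies the proof of Proposition~\ref{prop:nbc is resistant}.
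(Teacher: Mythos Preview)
Your overall strategy---prove (4) first, deduce $\EA(S)=\emptyset$, then get (2), (3), (6) from Proposition~\ref{prop:basis_decomp}---matches the paper's. But two steps are not actually carried out.

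\textbf{The case $e\in Y\cap F_i$ in (4).} You write that such an $e$ ``would open up a second valid insertion of biflats at the double jump, conflicting with the uniqueness of the surviving extended term.'' This uniqueness, however, is only established in Proposition~\ref{prop:nbc is resistant} \emph{for nbc biflags}, using the specific structure of $\IA(B)$; invoking it for an arbitrary resistant table is circular. The paper closes this gap differently: it first proves the preparatory Lemma~\ref{lem:DY vs DS}, namely $D\sqcup Y=\P(S)\sqcup\{x\}$ with $x=\min(E-B)\in D$, via Lemma~\ref{lem:big unions II}. From this, any $a\in F_i$ with $a<e_i$ must lie in $D\sqcup Y=\P(S)\sqcup\{x\}$; since $D\cap F_i=\emptyset$ we have $a\neq x$, so $a\in\P(S)$, which is impossible because $S\sqcup\P(S)$ is independent while $a\in\cl(S)$. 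Your sketch does not produce this lemma or any substitute for it.

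\textbf{The symmetry argument for $\IA(S)=\emptyset$ and (5).} You claim the reversed table is ``a resistant configuration for $\M^\perp$.'' But the resistance hypothesis is $x_{\F|\G}\gamma^k\neq 0$, which is asymmetric in $\M$ and $\M^\perp$: nothing tells you that $x_{\F|\G}\overline{\gamma}^{\,?}\neq 0$, and in fact $\G$ is already a complete flag in $\M^\perp$, so there is no room to multiply by $\overline{\gamma}$ at all. The paper does not use symmetry here. For (5) it runs the same argument as (4), again relying on Lemma~\ref{lem:DY vs DS}. For $\IA(S)=\emptyset$ it argues directly: if $e_i\in\IA(S)\subseteq\IA(B)$, take the fundamental cocircuit $C^\perp\subseteq(E-B)\cup e_i$ with $e_i=\min C^\perp$; Lemma~\ref{lem:DY vs DS} gives $E-B=\{e_{r-k+1},\ldots,e_{n-k-1},x\}$, and Lemma~\ref{lem:D is small} forces $x\notin C^\perp$, so $C^\perp-e_i\subseteq G_{r-k+1}$ and hence $e_i\in G_{r-k+1}$, contradicting Lemma~\ref{lem:crossover}.

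In short, the missing ingredient in your plan is Lemma~\ref{lem:DY vs DS}; without it, the hidden set $Y$ is not pinned down and both of your crucial steps remain unsupported.
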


\begin{example} \label{ex:resistant monomial}
For the cube graph of Figure \ref{fig:cube} and $k=2$, let us revisit the resistant monomial
\[
x_{b|E} \, x_{8b|E} \, x_{78b|E} \, x_{578b|E} \, x_{E|03469a} \, x_{E|469a} \, x_{E|69a} \, x_{E|9}.
\]
The arrival sequence $\ee = (b, 8, 7, 5; 3, 4, 9, a)$ was computed in Example \ref{ex:resistantterm}. We have $S=b875$, and its lexicographically smallest completion to a basis is given by $\P(S) = 016$. Then $B=\green{01}5\green{6}78b$ is indeed a $\nbc$ basis with $\IA(B) = \green{016}$ . Conditions (3) and (4) are easily checked directly, and  ${\F|\G} = {\F(B)| \G(B)}$ as described in Example \ref{ex:nbc biflag}.
\end{example}

We prepare the proof of  Proposition \ref{prop:delta expansion II} with  some technical lemmas.

\begin{lemma}\label{lem:unmixed expansion}
Let  $(\F|\G,\ee)$ be as in Proposition \ref{prop:delta expansion II}.  %gives rise to a term $x_{\F|\G}$ in the canonical expansion of $\delta^{n-k-1}$ that resists multiplication by $\gamma^k$. 
\begin{enumerate}[(1)]\itemsep 5pt
\item
For each $1\leq i\leq r-k$, there is an index $r-k+1 \leq j \leq n-k$ such that
\[
e_i=\max\big(E-(F_{i-1}\cup G_j)\big).
\]
It is  the smallest index $j \geq r-k+1$ such that $e_j>e_i$, or $j=n-k$ if there is no such index.
\item
For each $r-k+1\leq j\leq n-k-1$, there is an index  $0 \leq i\leq r-k$ such that
\[
e_j=\max\big(E-(F_i\cup G_{j+1})\big).
\]
It is  the largest index $i \leq r-k$ such that $e_i>e_j$, or $i=0$ if there is no such index.
\end{enumerate}
\end{lemma}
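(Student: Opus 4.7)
The plan is to read off the claim directly from the defining property of arrival sequences in Proposition \ref{prop:expansion}, using the structural information about the table $(\F|\G,\ee)$ that we established in Proposition \ref{prop:nomixedbiflats}. Recall that for any table in the canonical expansion of $\delta^{n-k-1}$, every entry $e_\ell$ satisfies
\[
e_\ell = \max\Big(E - \bigcup_{m\,:\,e_m>e_\ell}(F_m\cap G_m)\Big),
\]
so all we need to do is compute the union on the right for $\ell=i\le r-k$ (and then symmetrically for $\ell=j$).

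By Proposition \ref{prop:nomixedbiflats}, the biflats are not mixed: $G_m=E$ for $m\le r-k$, so $F_m\cap G_m=F_m$, and $F_m=E$ for $m\ge r-k+1$, so $F_m\cap G_m=G_m$. Thus the union splits as
\[
\bigcup_{m\le r-k,\, e_m>e_i} F_m \;\cup\; \bigcup_{m\ge r-k+1,\, e_m>e_i} G_m.
\]
Proposition \ref{prop:nomixedbiflats} also gives the orderings $e_1>\cdots>e_{r-k}$ and $e_{r-k+1}<\cdots<e_{n-k-1}$. The first ordering makes the first union equal to $\bigcup_{m<i}F_m=F_{i-1}$, using $F_0\subseteq F_1\subseteq\cdots$. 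The second ordering makes $\{m\ge r-k+1\mid e_m>e_i\}$ an upward-closed interval $\{j,j+1,\ldots,n-k-1\}$, where $j$ is the smallest index in $\{r-k+1,\ldots,n-k-1\}$ with $e_j>e_i$ (or $j=n-k$ if no such index exists, which gives the empty union once we set $G_{n-k}=\emptyset$). Using $G_j\supseteq G_{j+1}\supseteq\cdots$, the second union is $G_j$. Combining gives $e_i=\max(E-(F_{i-1}\cup G_j))$ as claimed.

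The second part is entirely symmetric: for $j$ in the coflat half one swaps the roles of $F$ and $G$ and reverses the inequalities. The monotone, decreasing sequence of $F$'s below $r-k$ turns $\{m\le r-k\mid e_m>e_j\}$ into a downward-closed interval $\{1,\ldots,i\}$, and the monotone, increasing sequence of $G$'s above $r-k$ handles the other half, yielding $e_j=\max(E-(F_i\cup G_{j+1}))$ for $i$ the largest index $\le r-k$ with $e_i>e_j$ (or $i=0$ if none, with the convention $F_0=\emptyset$).

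There is no real obstacle here; the work is in verifying that the two orderings from Proposition \ref{prop:nomixedbiflats} line up precisely with the monotonicity of the two halves of the flags, so that ``$e_m>e_i$'' cuts out a contiguous prefix of $\F$ and a contiguous suffix of $\G$. Once that is observed, the identity is a direct rewriting of Proposition \ref{prop:expansion}.
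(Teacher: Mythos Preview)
Your proposal is correct and follows exactly the approach the paper indicates: the paper's own proof is a one-line remark that the lemma is ``a straightforward restatement of Proposition \ref{prop:expansion}, since we have determined the table for $(\F|\G,\ee)$ in Proposition \ref{prop:nomixedbiflats},'' and you have simply unpacked that restatement in detail. The key observations you make---that $F_m\cap G_m$ equals $F_m$ or $G_m$ according to which side of the double jump $m$ lies on, and that the two monotone orderings of $\ee$ turn the index set $\{m:e_m>e_\ell\}$ into a prefix of $\F$ and a suffix of $\G$---are precisely what the paper leaves implicit.
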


\begin{proof}
This is a straightforward restatement of Proposition \ref{prop:expansion}, since we have determined the table for $(\F|\G,\ee)$ in Proposition \ref{prop:nomixedbiflats}. %are of the form \eqref{eq:finalarray}. 
\end{proof}

\begin{lemma}\label{lem:D is small}
Every element of the nonempty gap $D$ is smaller than every $e_i$.
\end{lemma}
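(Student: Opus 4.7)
The plan is to argue by contradiction, directly exploiting the two structural facts already established about resistant tables. Suppose there exist $x \in D$ and an index $i \in \{1, \dots, n-k-1\}$ with $x > e_i$. I would then invoke the canonical-expansion criterion of Proposition \ref{prop:expansion}, which says
\[
e_i = \max\Big(E - \bigcup_{j \,:\, e_j > e_i} (F_j \cap G_j)\Big).
\]
Since $x \in E$ and $x > e_i$, this maximality forces $x$ to lie in $F_j \cap G_j$ for some index $j$ with $e_j > e_i$.

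The next step is to use the unmixed structure of the resistant table, which was established in Proposition \ref{prop:nomixedbiflats}: the table has double jump at $d = r-k$, with $G_j = E$ for every $j \le r-k$ and $F_j = E$ for every $j \ge r-k+1$. Consequently, $F_j \cap G_j = F_j \subseteq F_{r-k}$ when $j \le r-k$, and $F_j \cap G_j = G_j \subseteq G_{r-k+1}$ when $j \ge r-k+1$. In either case, $x \in F_{r-k} \cup G_{r-k+1}$, which contradicts $x \in D = E - (F_{r-k} \cup G_{r-k+1})$.

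There is no real obstacle here; the lemma is a clean consequence of Propositions \ref{prop:expansion} and \ref{prop:nomixedbiflats}. The only point that requires some care is keeping track of which $e_j$'s have already arrived by the time $e_i$ is chosen in the canonical expansion: because the arrival order is the descending order of the $e_j$'s, the elements $e_j$ with $e_j > e_i$ are precisely the ones whose biflats $F_j | G_j$ are in place when $e_i$ is selected, which is exactly the hypothesis needed to apply the maximality condition.
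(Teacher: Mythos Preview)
Your proof is correct and follows essentially the same approach as the paper: both use the maximality condition from the canonical expansion together with the unmixed structure of the resistant table to force any element of $D$ exceeding some $e_i$ into $F_{r-k}\cup G_{r-k+1}$, contradicting the definition of $D$. The paper's version first reduces to the smallest $e_i$ (necessarily $e_{r-k}$ or $e_{r-k+1}$) and applies Lemma~\ref{lem:unmixed expansion} with a two-case split, whereas you argue directly for an arbitrary index $i$ via Proposition~\ref{prop:expansion}; this is a minor streamlining but not a genuinely different route.
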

\begin{proof}
The smallest $e_i$ is either $e_{r-k}$ or $e_{r-k+1}$. 
If $e_{r-k}<e_{r-k+1}$, then Lemma~\ref{lem:unmixed expansion} implies that
\[
e_{r-k} = \max\big(E-(F_{r-k-1}\cup G_{r-k+1})\big) \geq \max D,
\]
where we used $F_{r-k-1}\cup G_{r-k+1} \subseteq F_{r-k}\cup G_{r-k+1} = E-D$. Since $D\cap \set{e_1,\ldots,e_{n-k-1}}$ is empty, the inequality is strict.
If $e_{r-k}>e_{r-k+1}$, a similar argument shows $e_{r-k+1} > \max D$.  
\end{proof}

The first part of Proposition \ref{prop:bottomrow} tells us that $S$ is independent in $\M$,
and the first part of Proposition \ref{prop:basis_decomp} tells us that
the set $B\coloneq S\sqcup \P(S)$ is a basis. 
In particular, $\abs{\P(S)}=k+1$ and $\Q(S) = \emptyset$.  %Let $B^\perp=E-B$, a basis for $\M^\perp$. 
Our next result relates $\P(S)$ with the partition $E = \set{e_1,\ldots,e_{n-k-1}} \sqcup D\sqcup Y$. 
We illustrate  this in Figure \ref{fig:partitionDY}, which is a refinement of Figure \ref{fig:activities} in the case
$\IA(S) = \EA(S) = \emptyset$.

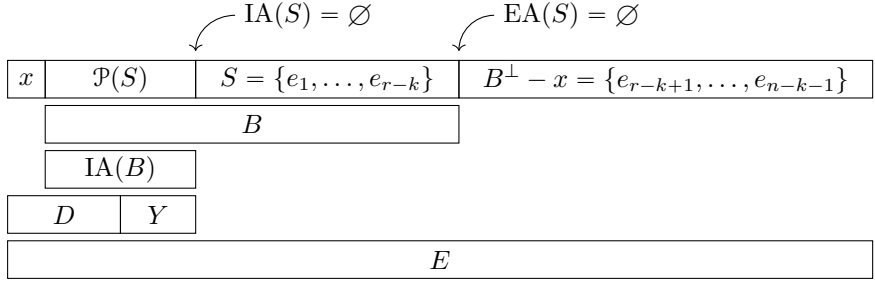
\begin{figure}[h]
  \centering
  \begin{tikzpicture}
    \def\dy{0.6}
    \draw (1,0) rectangle node{$E$} (12.5,0.5);
    \draw (1,\dy) rectangle node{$D$} (2.5,0.5+\dy);
    \draw (2.5,\dy) rectangle node{$Y$} (3.5,0.5+\dy);
    \draw (1.5,2*\dy) rectangle node{$\IA(B)$} (3.5,0.5+2*\dy);
    \draw (1.5,3*\dy) rectangle node{$B$} (7,0.5+3*\dy);
    \draw (1,4*\dy) rectangle node{$x$} (1.5,0.5+4*\dy);
    \draw (1.5,4*\dy) rectangle node{$\P(S)$} (3.5,0.5+4*\dy);
    \draw (3.5,4*\dy) rectangle node{$S=\set{e_1,\ldots,e_{r-k}}$}
    (7,0.5+4*\dy);
    \draw (7,4*\dy) rectangle
    node{$B^\perp-x=\set{e_{r-k+1},\ldots,e_{n-k-1}}$} (12.5,0.5+4*\dy);
    \node (one) at (3.5,0.5+4*\dy) {};
    \node (two) at (7,0.5+4*\dy) {};
    \node (three) at (5,0.5+5*\dy) {$\IA(S)=\emptyset$};
    \node (four) at (8.5,0.5+5*\dy) {$\EA(S)=\emptyset$};
    \draw[<-,out=90,in=180] (one) to (three);
    \draw[<-,out=90,in=180] (two) to (four);
  \end{tikzpicture}

\caption{Activities and the partition of $E$. %= \set{e_1,\ldots,e_{r-k}} \sqcup \set{e_{r-k+1}, \ldots, e_{n-k-1}} \sqcup D\sqcup Y$.
\label{fig:partitionDY}}
\end{figure}

\begin{lemma}\label{lem:DY vs DS}
For $x = \min(E-B) \in D$, we have %some element $x \in D$, we have
%We have two partitions of the set $E-\set{e_1,\ldots,e_{n-k-1}}$:
%Let $m=\min B^\perp$.  Then
%\begin{equation}\label{eq:DY vs DS}
\[
E-\set{e_1,\ldots,e_{n-k-1}}=D\sqcup Y = \P(S)\sqcup \set{x}.
\]
%\end{equation}
%for some element $x \in D$. 
%Furthermore, $x = \min(E-B)$.
\end{lemma}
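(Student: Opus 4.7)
The plan is to deduce both the identity $D \sqcup Y = \P(S) \sqcup \{x\}$ and the membership $x = \min(E-B) \in D$ from a tight description of $\P(S)$. The first equality in the statement is immediate from the definition of $Y$. I would first reduce the problem to proving two claims: \textbf{(A)} no element $e_j$ with $r-k+1 \leq j \leq n-k-1$ lies in $\P(S)$, and \textbf{(B)} every $y \in Y$ lies in $\P(S)$. Since $\P(S) \subseteq E-S$ and $|\P(S)|=k+1 = |D|+|Y|-1$, these two claims force $\P(S) \subseteq D \cup Y$ with $|\P(S) \cap D| = |D|-1$, so $D - \P(S) = \{x\}$ for a single element $x$, yielding $D \sqcup Y = \P(S) \sqcup \{x\}$. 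Then $E - B = \{e_{r-k+1},\ldots,e_{n-k-1}\} \sqcup \{x\}$, and Lemma \ref{lem:D is small} says $x < e_i$ for each $i \geq r-k+1$, so $x = \min(E-B) \in D$.

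I expect claim (A) to be the main obstacle, and would attack it via the cocircuit characterization of Proposition/Definition \ref{def:P_M(S)}. Assume $e_j \in \P(S)$ for some $r-k+1 \leq j \leq n-k-1$; then there is a cocircuit $C^\perp$ of $\M$ contained in $E-S$ with $\min C^\perp = e_j$. Its complement is a hyperplane containing $S$, hence $F_{r-k} = \cl(S)$, so $C^\perp \cap F_{r-k} = \emptyset$. By Lemma \ref{lem:unmixed expansion}(2), every $f > e_j$ lies in $F_i \cup G_{j+1} \subseteq F_{r-k} \cup G_{j+1}$ for some $i \leq r-k$, which forces $C^\perp - \{e_j\} \subseteq G_{j+1}$. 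On the other hand, $e_{j+1} > e_j$ combined with Proposition \ref{prop:expansion} and $F_{j+1}=E$ gives $e_j \notin G_{j+1}$; the boundary case $j = n-k-1$ is handled by $G_{n-k} = \emptyset$. But $C^\perp$ is a circuit of $\M^\perp$ through $e_j$, forcing $e_j \in \cl^\perp(G_{j+1}) = G_{j+1}$, a contradiction. The possibility $C^\perp = \{e_j\}$ is excluded because $\M$ has no coloops.

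With (A) in hand, (B) is short. For $y \in Y$, the basis $S \cup (D - \max D) \cup Y$ from Proposition \ref{prop:bottomrow}(1) (obtained as the complement of $B^\perp(\max D)$) is independent, so $y \notin \cl(S) = F_{r-k}$; together with $y \notin D$ and the decomposition $E = F_{r-k} \cup G_{r-k+1} \cup D$ from Lemma \ref{lem:nongaps}, this gives $y \in G_{r-k+1} = \cl^\perp\{e_{r-k+1},\ldots,e_{n-k-1}\}$. The fundamental circuit of $y$ in $\M^\perp$ with respect to this independent set is then a cocircuit $C^\perp$ of $\M$ inside $\{y\} \cup \{e_{r-k+1},\ldots,e_{n-k-1}\} \subseteq E-S$ that contains $y$ and, by the absence of coloops, at least one $e_{i_0}$. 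Its minimum cannot be any such $e_{i_0}$ by claim (A), so $\min C^\perp = y$, whence $y \in \P(S)$ by the cocircuit characterization. The cardinality count from the first paragraph then concludes the proof.
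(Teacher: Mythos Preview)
Your proof is correct, and its overall shape matches the paper's: first show $\P(S)$ misses all the $e_j$ with $j>r-k$, then locate the single element of $(D\sqcup Y)-\P(S)$ by a cardinality count, and finish with Lemma~\ref{lem:D is small}. Your claim~(A) is essentially the paper's argument, phrased via closure in $\M^\perp$ rather than via Lemma~\ref{lem:big unions}; the two are equivalent once one notices that $C^\perp-\{e_j\}\subseteq G_{j+1}$ says exactly that $G_{j+1}\cup(E-C^\perp)$ omits only $e_j$.

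Where you genuinely diverge is in the second step. The paper does \emph{not} prove $Y\subseteq\P(S)$ directly; instead it invokes the separately established Lemma~\ref{lem:big unions II} (that $F\cup G\neq E$ implies $F\cup G\cup\P(S)\neq E$) to produce an element $x\in D$ with $x\notin\P(S)$, and only then concludes by cardinality. Your claim~(B) bypasses Lemma~\ref{lem:big unions II} entirely: you first observe $y\notin\cl(S)$ using the basis $E-B^\perp(\max D)$ from Proposition~\ref{prop:bottomrow}, deduce $y\in G_{r-k+1}$ (which is the content of the paper's later Lemma~\ref{lem:Y is in top G}), and then use the fundamental circuit of $y$ in $\M^\perp$ together with claim~(A) to force $y=\min C^\perp$. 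This is a cleaner and more self-contained route: it trades a somewhat technical external lemma for a short direct argument that reuses claim~(A). The paper's route, by contrast, isolates Lemma~\ref{lem:big unions II} as a standalone fact of possibly independent interest.
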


\begin{proof}%[Proof of Lemma \ref{lem:DY vs DS}]
We first show that $\P(S)\subseteq D\sqcup Y$. By way of a contradiction, suppose $e_i\in 
\P(S)$ for some $i$. Since $e_1, \ldots, e_{r-k} \in S$, we must have $i\geq r-k+1$.
By Definition~\ref{def:P_M(S)}, there is a cocircuit $C^\perp$ for which $e_i=\min C^\perp$ and $C^\perp \subseteq E - \cl(S) = E - F_{r-k}$. Also, 
%$C^\perp\cap\cl(S)=\emptyset$, so $C^\perp\cap F_j=\emptyset$ for all $1\leq j\leq r-k$.  By 
Lemma~\ref{lem:unmixed expansion} tells us that
%\begin{align*}
%e_i &=\max(E-G_{i+1}\cup F_j)& \text{for some $j\leq r-k$;}\\
%&=\max\big((E-G_{i+1})\cap(E-F_j)\big).
%\end{align*}
\[
e_i =\max
(E-F_j)  \cap (E-G_{i+1})
%\big(E-(G_{i+1}\cup F_j)\big)
 \ \   \text{for some $j\leq r-k$}.
\]
However, since $e_i \notin G_{i+1}$ by Lemma \ref{lem:crossover}, we have 
\[
(E-F_j) \cap (E-G_{i+1}) \supseteq (E-F_{r-k}) \cap (E-G_{i+1}) \supseteq C^\perp \cap (E-G_{i+1}) \ni e_i.
\]
Therefore, we must also have
$e_i =\max\big((E-G_{i+1}) \cap C^\perp\big)$.
Since $e_i = \min C^\perp$ we must have $(E-G_{i+1}) \cap C^\perp = \set{e_i}$, and hence $\abs{G_{i+1} \cup (E-C^\perp)} = n$; this contradicts Lemma \ref{lem:big unions}.

Next we show that there must be an element $x \in D$ that is not in $\P(S)$. 
%, establishing that the right-hand union is disjoint.
To do that, we invoke Lemma \ref{lem:big unions II}.  Since $\cl(S) \cup G_{r-k+1} = F_{r-k} \cup G_{r-k+1} \neq E$, we must have some element $x \notin F_{r-k} \cup G_{r-k+1} \cup \P(S)$. However, $x \notin F_{r-k} \cup G_{r-k+1}$ means that $x \in D$.

We observe that $\abs{D \sqcup Y} = \abs{E} - \abs{\set{e_1, \ldots, e_{n-k-1}}} = (n+1) - (n-k-1) = k+2$, while $\abs{\P(S)}=\abs{B}-\abs{S} = (r+1)-(r-k) = k+1$.  Since our
$x\not\in \P(S)$, the inclusion $\P(S)\sqcup\set{x} \subseteq D \sqcup Y$ must in fact be an equality.

Finally, Lemma \ref{lem:D is small} and $E-B = \{x, e_{r-k+1}, \ldots, e_{n-k}\}$ give $x = \min(E-B)$.
\end{proof}

\begin{lemma}\label{lem:Y is in top G}
We have the inclusion $Y\subseteq G_{r-k+1}$.
\end{lemma}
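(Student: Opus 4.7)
The plan is to combine the set-partition computation of Lemma \ref{lem:DY vs DS} with the complementary description of $\P(S)$ from Proposition/Definition \ref{def:P_M(S)}, and then use the defining relation $F_{r-k}\cup G_{r-k+1}=E-D$ coming from Proposition \ref{prop:nomixedbiflats}. No delicate activity argument is required; the work has essentially already been done in the preceding lemmas.

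First, I would apply Lemma \ref{lem:DY vs DS}, which gives the disjoint-union identity $D \sqcup Y = \P(S) \sqcup \{x\}$ with $x \in D$. Since $Y$ is by definition disjoint from $D$, the single element of $\P(S)\sqcup\{x\}$ lying in $D$ must be $x$, and hence every element of $Y$ lies in $\P(S)$. This gives the inclusion $Y \subseteq \P(S)$.

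Next, I would invoke the second characterization in Proposition/Definition \ref{def:P_M(S)}: each $p \in \P(S)$ is the minimum of a cocircuit $C^\perp \subseteq E - \cl(S)$, and in particular $p \notin \cl(S) = F_{r-k}$. Therefore $Y \subseteq \P(S) \subseteq E - F_{r-k}$.

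Finally, from the table produced in Proposition \ref{prop:nomixedbiflats} we have $F_{r-k}\cup G_{r-k+1} = E - D$, which is equivalent to $E - G_{r-k+1} \subseteq F_{r-k} \cup D$. Since $Y \cap D = \emptyset$ by definition of $Y$ and $Y \cap F_{r-k} = \emptyset$ by the previous step, we obtain $Y \cap (E - G_{r-k+1}) = \emptyset$, i.e., $Y \subseteq G_{r-k+1}$, as desired. I do not foresee any real obstacle here: the only subtle point is that one must use the form of $\P(S)$ sitting inside $E - \cl(S)$ rather than just $E - S$, but this is precisely the content of the equivalent characterizations bundled into Proposition/Definition \ref{def:P_M(S)}.
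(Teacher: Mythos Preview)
Your proof is correct and follows essentially the same route as the paper's. Both arguments use Lemma~\ref{lem:DY vs DS} to get $Y\subseteq \P(S)$, then observe $\P(S)\cap F_{r-k}=\emptyset$ (you via the cocircuit description in Proposition/Definition~\ref{def:P_M(S)}, the paper via the independence of the basis $S\sqcup\P(S)$), and finally conclude from $Y\subseteq E-D=F_{r-k}\cup G_{r-k+1}$; the paper phrases the last step as a contradiction while you do it directly, but the content is identical.
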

\begin{proof}
We have that $Y\subseteq F_{r-k}\cup G_{r-k+1} = E-D$. If the claim were not true, there would be an element $y \in Y$ such that $y \in F_{r-k} = \cl(S)$, so $S \cup y$ would be dependent. But Lemma~\ref{lem:DY vs DS} would then imply that $y \in Y \subseteq \P(S)$, contradicting the fact that $S \cup P(S)$ is a basis.
\end{proof}

We now  prove our description of the resistant terms in the canonical expansion of $\delta^{n-k-1}$. 

\begin{proof}[Proof of Proposition~\ref{prop:delta expansion II}] We prove parts (4), (5), (1), (2), (3), (6), in that order.
%  \begin{itemize}[leftmargin=0em]
\begin{enumerate}[(1)]\itemsep 5pt
    \item[(4)]
%First we prove claim 
%\eqref{enum:delta 3}:  
Let $1\leq i\leq r-k$. Lemma~\ref{lem:unmixed expansion} provides a $j\geq r-k+1$ for which $e_i=\max\big(E-(F_{i-1}\cup G_j)\big)$.   We claim that
\[
F_i\cup G_j \subseteq \set{e_1,\ldots,e_i} \sqcup \set{e_j,e_{j+1},\ldots,e_{n-k-1}} \sqcup D \sqcup Y.
\]
In view of the decomposition $E=\set{e_1,\ldots,e_{n-k-1}}\sqcup D\sqcup Y$, this amounts to checking that $e_h\not\in F_i\cup G_j$
for $i<h<j$, which follows from Lemma~\ref{lem:crossover}.

Now assume, for the sake of contradiction, that $\min F_i = a < e_i$. Since $e_1>\cdots>e_i$ and $e_{n-k-1} > \cdots > e_j > e_i$, we must have $a \in D \sqcup Y = \P(S)\sqcup x$, by Lemma~\ref{lem:DY vs DS}.  
However, $D \cap F_i = \emptyset$ and $a \in F_i$, so $a$ cannot be $x$.  Additionally, $\P(S)$ is independent from $S$ and $a \in F_i\subseteq \cl(S)$, so $a$ cannot be in $\P(S)$ either. %This is a contradiction. 
We conclude $e_i=\min F_i$ for $i\leq r-k$.

\item[(5)] Let $r-k+1 \leq j \leq n-k-1$. As before, Lemma~\ref{lem:unmixed expansion} provides a $0 \leq i\leq r-k$ for which
\[
F_i\cup G_j \subseteq \set{e_1,\ldots,e_i} \sqcup \set{e_j,e_{j+1},\ldots,e_{n-k-1}} \sqcup D \sqcup Y.
\]
Assume, for the sake of contradiction, that $\min \big(G_j-\P(S)\big) = a < e_j$. Since $e_1>\cdots>e_i>e_j$ and $e_{n-k-1} > \cdots > e_j$, we must have $a \in D \cup Y$. But $a \notin \P(S)$ by definition, and $D \cap G_j = \emptyset$ so $a \neq x$.  The desired result follows.

%\medskip 

%\noindent %\eqref{enum:delta 0}: 
\item[(1)] If we had $a\in\EA(S)$, then $S\cup a $ would contain a circuit $C$ with $a=\min C$. For the largest $i$ with $e_i\in C$, we would then have $C-a \subseteq F_i$. Since $C$ is a circuit and $F_i$ is a flat, this would imply that $a\in F_i$, contradicting that $e_i=\min F_i$ as shown in 4. %\eqref{enum:delta 3}. 
Thus $\EA(S)=\emptyset$.

Suppose we had $e_i \in \IA(S)$ for some $1 \leq i \leq r-k$. Then $e_i\in \IA(B)$, so there is a cocircuit $C^\perp \subseteq (E-B) \cup e_i = \{e_{r-k+1}, \ldots, e_{n-k}, x, e_i\}$ with $e_i=\min C^\perp$. By Lemma~\ref{lem:D is small}, this means that $x \notin C^\perp$. Therefore 
$C^\perp-e_i \subseteq G_{r-k+1}$. But then, since $C^\perp$ is a circuit and $G_{r-k+1}$ is a flat in $\M^\perp$, we must have $e_i \in G_{r-k+1}$ as well; this contradicts Lemma~\ref{lem:ordering} since $i<r-k+1$ and $e_i<e_{r-k+1}$.

\item[(2)]
  Proposition \ref{prop:basis_decomp} and 1. tell us that $B$ is a basis and $\EA(B)= \EA(S)=\emptyset$.
\item[(3)]
  Proposition \ref{prop:basis_decomp} and 2 tell us that $\IA(B)=\P(S)$.
\item[(6)]
  By (2) and (3), we have 
%\eqref{enum:delta 1} and \eqref{enum:delta 2} we have
\[
B - \IA(B) = S = \{e_1 > \cdots > e_{r-k}\}
\]
and Lemma \ref{lem:DY vs DS} tells us that 
\[
E-B - \min(E-B) = \{e_{r-k+1} < \cdots < e_{n-k-1}\}
\]
Therefore, by Corollary \ref{cor:bottomrow2}, the flags $\F$ and $\G$ %of the table \eqref{eq:finalarray} 
are precisely the flags $\F(B)$ and $\G(B)$ of the $\nbc$ biflag of $B$, as described in Proposition \ref{def:nbc biflags}. \qedhere
\end{enumerate}
\end{proof}

\begin{corollary}\label{cor:no multiplicity}
Let $(\F|\G,\ee)$ be a table arising in the canonical expansion of $\delta^{n-k-1}$ such that  $x_{\F|\G}$  resists multiplication by $\gamma^k$. Then the arrival sequence $\ee$ is determined uniquely by the biflag $\F|\G$.
%For each resistant term $x_{\F|\G}$ in the canonical expansion of $\delta^{n-k-1}$, there is a unique arrival sequence $\ee$ giving rise to it.
\end{corollary}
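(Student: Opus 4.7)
The plan is to extract the arrival sequence $\ee$ directly from the biflag $\F|\G$ using the explicit formulas established in Proposition \ref{prop:delta expansion II}. Since this corollary comes on the heels of that proposition, essentially all the work is already done, and the proof amounts to an observation.

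First I would apply Proposition \ref{prop:delta expansion II}(4), which gives $e_i = \min F_i$ for each $1 \leq i \leq r-k$. Hence the first $r-k$ entries of $\ee$ are read off from the flats appearing in $\F$ alone. In particular, the independent set $S = \{e_1, \ldots, e_{r-k}\}$ is determined by $\F|\G$.

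Next, since $S$ is determined, so is the set $\P(S)$ of Proposition/Definition \ref{def:P_M(S)}, which depends only on $S$ and the matroid $\M$. Then Proposition \ref{prop:delta expansion II}(5) yields
\[
e_j = \min(G_j - \P(S)) \ \ \text{for $r-k+1 \leq j \leq n-k-1$,}
\]
so the remaining entries of $\ee$ are determined by $\G$ together with the already-computed set $\P(S)$.

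Thus every entry of $\ee$ is a function of the biflag $\F|\G$, so two distinct arrival sequences cannot yield the same biflag among resistant tables. There is no real obstacle here: the corollary is essentially a restatement of parts (4) and (5) of Proposition \ref{prop:delta expansion II}, repackaging the fact that a resistant table is recorded by its biflag without any combinatorial ambiguity in the bottom row.
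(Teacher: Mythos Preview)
Your proof is correct and follows essentially the same approach as the paper's: the arrival sequence is recovered from the biflag via the explicit formulas in Proposition~\ref{prop:delta expansion II}. The paper's one-line proof cites parts (3) and (4), while you invoke (4) and (5) and spell out the intermediate step of determining $\P(S)$; this is arguably cleaner, but the underlying idea is identical.
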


\begin{proof}
The arrival sequence $\ee$ is determined by (3) and (4) of Proposition \ref{prop:delta expansion II}.
\end{proof}

%\subsection{A combinatorial formula for $\gamma^k\delta^{n-k-1}$.}

We are finally ready to prove our description of the canonical expansion of $\gamma^k\delta^{n-k-1}$ into monomials.

%\begin{reptheorem}{thm:main}
%\begin{theorem}\label{thm:theexpansion} %\label{thm:theexpansion}
%Let $\M$ be a matroid of rank $r+1$ on $n+1$ elements. Then we have 
%\[
%\gamma^k \delta^{n-k-1} = \sum_{\substack{B \nbc \textrm{basis} \\ |\IA(B)| = k+1}} x_{\F^+(B)| \G^+(B)}
%\]
%in the Chow ring $A_{\M,\M^\perp}$. Therefore
%\[
%\deg(\gamma^k \delta^{n-k-1}) = h_{r-k}(\RBC(\M)).% t_{k+1,0}(\M) 
%\]
%\end{theorem}
%\end{reptheorem}

\begin{proof}[Proof of Theorem \ref{thm:main}]
Every resistant table $(\F|\G,\ee)$ in the canonical expansion of $\delta^{n-k-1}$ gives a $\nbc$ monomial $x_{\F(B)| \G(B)}$ by the last part of Proposition \ref{prop:delta expansion II}. Every such monomial does appear in this expansion by Proposition \ref{prop:delta expansion III}. Furthermore, it appears only once by Corollary \ref{cor:no multiplicity}. Therefore, we have 
\[
\gamma^k \delta^{n-k-1} = \sum_{\substack{B \nbc \textrm{basis} \\ |\IA(B)| = k+1}} \gamma^k x_{\F(B)| \G(B)}.
\]
The desired formula for $\gamma^k \delta^{n-k-1}$ then follows by Proposition \ref{prop:nbc is resistant}.
%For every maximal biflag $\F|\G$ the monomial $x_{\F|\G}$ has degree $1$ in $A_{\M,\M^\perp}$, so $\deg(\gamma^k\delta^{n-k-1})= h_{r-k}(\RBC(\M))$ as desired.
\end{proof}

\bibliographystyle{amsalpha}
\bibliography{refs}

\end{document}